\documentclass[a4paper]{amsart}

\usepackage[utf8]{inputenc}
\usepackage[english]{babel}
\usepackage{color}
\usepackage{hyperref}
\usepackage{float}
\usepackage{tikz}
\usepackage[normalem]{ulem} %%%Allows strike throughs
\usetikzlibrary{automata}
\usepackage{mathtools}
\usepackage[margin=3cm]{geometry}

\usepackage{amsthm}
 \newtheorem{theorem}{Theorem}[section]
 \newtheorem{lemma}[theorem]{Lemma}
 \newtheorem{corollary}[theorem]{Corollary}
 \newtheorem{proposition}[theorem]{Proposition}

  \newtheorem{introthm}{Theorem}
  \newtheorem{introcor}[introthm]{Corollary}

\theoremstyle{definition}
\newtheorem{definition}[theorem]{Definition}
\newtheorem{claim}[theorem]{Claim}
\newtheorem{example}[theorem]{Example}
\newtheorem{remark}[theorem]{Remark}

\newtheorem{problem}{Problem}

\usepackage{amssymb}

\newcommand{\Cone}{\text{Cone}}

\tikzset{vertex/.style={circle, draw, fill=black!50},inner sep=0pt, minimum width=4pt}

\newtheorem{convention}[theorem]{Convention}
\newcommand{\cay}{\operatorname{Cay}}
\newcommand{\mc}{\mathcal}

\DeclareMathOperator{\CAT}{CAT}

\definecolor{Green}{RGB}{30, 150, 60}

\title{Regularity of Morse geodesics and growth of stable subgroups}
\author{Matthew Cordes}
\address{ETH Zurich, Raemistrasse 101, 8092, Zurich, Switzerland}
\email{matthew.cordes@math.ethz.ch}

\author{Jacob Russell}
\address{Rice University, 6100 Main St, Houston, TX 77005, USA}
\email{jacob.russell@rice.edu}

\author{Davide Spriano}
\address{ETH Zurich, Raemistrasse 101, 8092, Zurich, Switzerland}
\email{davide.spriano@math.ethz.ch}

\author{Abdul Zalloum}
\address{Queen's University, 99 University Ave, Kingston, ON K7L 3N6, Canada}
\email{az32@queensu.ca}

%%%%
%%%% TITLE
%%%%
\begin{document}
\maketitle
\begin{abstract}
We prove that Morse local-to-global groups grow exponentially faster than their infinite index stable subgroups. This generalizes a result of Dahmani, Futer, and Wise in the context of quasi-convex subgroups of hyperbolic groups to a broad class of groups that contains the mapping class group, CAT(0) groups, and the fundamental groups of closed 3-manifolds. To accomplish this, we develop a theory of automatic structures on Morse geodesics in Morse local-to-global groups. Other applications of these automatic structures include a description of stable subgroups in terms of regular languages,  rationality of the growth of stable subgroups, density in the Morse boundary of the attracting fixed points of Morse elements, and containment of the Morse boundary inside the limit set of any infinite normal subgroup.
\end{abstract}

\section{Introduction}

Growth and curvature are central themes in geometric group theory. These ideas are frequently intertwined, with a particularly expansive literature existing around  growth in groups with features of negative curvature. In the present work, we study the growth of geometrically well-behaved groups in a large class of groups exhibiting hyperbolic-like behavior. 

 If $G$ is a group with finite generating set $A$, then the \emph{growth function} of a subgroup $H$ with respect to $A$ is the function $f_{H,A} \colon \mathbb{N} \to \mathbb{N}$ where $f_{H,A}(n)$ is the number of elements of $H$ whose distance from the identity in the Cayley graph of $G$ with respect to $A$ is at most $n$. The \emph{growth rate} of the subgroup $H$ with respect to $A$  is denoted $\lambda_{H,A}$ and defined as \[\lambda_{H,A} = \limsup_{n\to \infty} \sqrt[n]{f_{H,A}(n)}.\] Both the growth function and the growth rate of $H$ are highly dependent on the choice of generating set for $G$.

Dahmani, Futer, and Wise  recently proved that  quasi-convex subgroups of hyperbolic groups exhibit a \emph{growth rate gap} \cite{DahmaniFuterWise2018}.  That is, if $H$ is an infinite index quasi-convex subgroup of a hyperbolic group $G$, then the growth rate of $H$ is strictly less than the growth rate of $G$, regardless of the choice of finite generating set. One of the concluding results of the present work extends this result to a variety of non-hyperbolic groups that contain infinite \emph{Morse geodesics}, a hallmark of hyperbolic-like behavior. This is accomplished by developing a theory of regular languages for Morse geodesics in these groups that is analogous to the celebrated   automatic structures for hyperbolic groups first studied by Cannon.

\subsection{Statement of Results.}  Given a function $M \colon [1,\infty) \times [0,\infty) \to [0,\infty)$, a (quasi-) geodesic $\gamma$ is  $M$-Morse if  every $(k,c)$-quasi-geodesic with endpoints on $\gamma$ is contained in the $M(k,c)$-neighborhood of the subsegment of $\gamma$ between its endpoints. The function $M$ is called a \emph{Morse gauge}. Because a group is hyperbolic if and only if every infinite geodesic in the Cayley graph is Morse, Morse geodesics are a sign of hyperbolic-like behavior \cite{Cashen_Mckay_boundary,Gromov}. A large number of important, non-hyperbolic groups contain infinite Morse geodesics \cite{Behrstock_pA_are_Morse, Sisto_Morse_in_aH,OOS_Lacunary_hyperbolic_groups}, allowing fruitful generalizations of  results from the theory of hyperbolic groups \cite{Cordes2017,DMS_divergence,ChSu2014,AMST_intersection_of_stable_subgroups}. In the context of Morse geodesics, the analogous notion to quasi-convex subgroups of hyperbolic groups are the \emph{stable subgroups} introduced by Durham and Taylor \cite{DurhamTaylor2015}. Stable subgroups are precisely the subgroups that are quasi-convex with respect to Morse geodesics with a fixed Morse gauge; see Definition \ref{def:stable_subgroup}. We prove that Dahmani, Futer, and Wise's growth rate gap persists for stable subgroups of groups that satisfy a local-to-global property for their Morse quasi-geodesics.

\begin{introthm}[Growth rate gap for stable subgroups]\label{intro_thm:growth_torsion_free}
	Let $G$ be a finitely generated and virtually torsion-free group with the Morse local-to-global property. For any finite generating set $A$ of $G$ and any infinite index stable subgroup $H <G$, the growth rate of $H$ with respect to $A$ is strictly smaller than the growth rate of $G$ with respect to $A$. That is, \[\lambda_{H,A} < \lambda_{G,A}.\]
	In particular, this conclusion holds when $G$ is virtually either the mapping class group of an orientable finite type surface, the fundamental group of a compact special cube complex, or the fundamental group of a closed $3$-manifold.
\end{introthm}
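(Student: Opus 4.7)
The plan is to adapt the Dahmani--Futer--Wise strategy to the Morse setting: reduce to a torsion-free finite index subgroup, realize both $G$'s ``Morse part'' and the subgroup $H$ as regular languages in a common automatic structure, and apply a strict Perron--Frobenius argument to separate the spectral radii of their transition matrices.

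First I would use the virtually torsion-free hypothesis to pass to a torsion-free finite index subgroup $G_0 \leq G$ and set $H_0 = H \cap G_0$. Standard arguments give $\lambda_{G,A} = \lambda_{G_0,A}$ and $\lambda_{H,A} = \lambda_{H_0,A}$, while $H_0$ remains stable and of infinite index in $G_0$, so one may assume $G$ is torsion-free. Next, let $M$ be a Morse gauge for which all geodesics of $H$ are $M$-Morse (which exists because $H$ is stable), and use the Morse local-to-global property to construct a finite-state automaton $\mathcal{A}$ with language $L$ such that each accepted word represents a distinct element of $G$ lying on an $M$-Morse geodesic from the identity, and every such element admits an accepted representative. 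The LTG property is exactly what promotes the ``locally $M$-Morse'' pieces encoded by transitions of $\mathcal{A}$ to honest global $M$-Morse quasi-geodesics, so that accepted words correspond faithfully to group elements. Within $\mathcal{A}$ the words representing elements of $H_0$ should form the language of a sub-automaton $\mathcal{A}_H$, using stability of $H_0$ together with standard closure properties of regular languages.

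Having set this up, the growth rates become spectral radii: $\lambda_{H,A} = \rho(T_H)$ where $T_H$ is the nonnegative transition matrix of $\mathcal{A}_H$, and $\rho(T) \leq \lambda_{G,A}$ where $T$ is the transition matrix of $\mathcal{A}$ (since $L$ injects into $G$). The argument concludes by establishing the strict inequality $\rho(T_H) < \rho(T)$: the infinite index of $H$ in $G$ produces a Morse element $g \in G_0 \setminus H_0$ whose normal form visits a state of $\mathcal{A}$ outside $\mathcal{A}_H$, and combined with irreducibility of a suitable recurrent component of $\mathcal{A}$ (inherited from the flexibility of concatenating $M$-Morse segments via LTG), Perron--Frobenius yields strict decrease in spectral radius upon restriction to a proper subautomaton. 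Chaining the comparisons gives $\lambda_{H,A} = \rho(T_H) < \rho(T) \leq \lambda_{G,A}$.

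The hardest step will be the strict Perron--Frobenius one. Abstractly, $\rho(T_H) < \rho(T)$ requires locating an irreducible (or primitive) component of $\mathcal{A}$ which realizes $\rho(T)$ and which contains states missed by $\mathcal{A}_H$. It is not automatic that such a component exists — the Morse automaton need not be globally irreducible — nor that infinite index of $H$ forces $\mathcal{A}_H$ to miss states specifically in the dominant component. Bridging this gap is where the paper's technical theory of Morse automatic structures will be indispensable, and is almost certainly where the bulk of the work lies.
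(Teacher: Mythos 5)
There is a genuine gap, and you have correctly located it yourself: the strict Perron--Frobenius step. Your plan compares the automaton $\mathcal{A}_H$ for $H_0$ with the full ``Morse'' automaton $\mathcal{A}$ for $G$ and hopes that infinite index forces $\mathcal{A}_H$ to miss a state in a dominant irreducible component of $\mathcal{A}$. Nothing in the paper (or in any evident argument) supplies this: the automaton for the locally-$M$-Morse language is not strongly connected in general, $\mathcal{A}_H$ is not naturally a subgraph of it (in the paper the $H$-language is obtained as an intersection, so its automaton is a product automaton with different states), and there is no a priori reason the spectral radius of $\mathcal{A}$ is achieved on a component that $\mathcal{A}_H$ fails to cover. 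So the inequality $\rho(T_H)<\rho(T)$, which is the whole content of the theorem, is left unproved, and I do not see how to prove it along the route you describe.

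The paper gets strictness by a different mechanism that your proposal is missing. The virtually torsion-free hypothesis is used, via the Russell--Spriano--Tran combination theorem for torsion-free Morse local-to-global groups, to produce an element $g$ with $\langle H',g\rangle\cong H'\ast\langle g\rangle$, where $H'=H\cap G'$ for a torsion-free finite-index $G'\le G$ (all growth computations stay in $\cay(G,A)$, which sidesteps the issue that $A$ need not generate $G'$; your ``assume $G$ is torsion-free'' reduction quietly changes the generating set, and growth rates are not stable under that). One then takes the pruned automaton $\mathcal{G}$ accepting the geodesic language $J_{H'}$ that bijects with $H'$, and \emph{enlarges} it: add an edge labeled by a geodesic word for $g$ from every accept state back to the initial state (and make the initial state accepting). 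The enlarged automaton $\mathcal{G}'$ is strongly connected by construction (pruning of $\mathcal{G}$ plus the new return edges), so the Dahmani--Futer--Wise subgraph lemma gives the strict inequality $\rho_{\mathcal{G}}<\rho_{\mathcal{G}'}$, while the free-product structure makes evaluation injective on the enlarged language, giving $\lambda_{J'}\le\lambda_{\langle H',g\rangle,A}\le\lambda_{G,A}$; combined with $\lambda_{H,A}=\lambda_{H',A}=\rho_{\mathcal{G}}$ this yields the gap. In short: strictness comes from comparing $H'$ with the explicitly built free product $H'\ast\langle g\rangle$ inside $G$, not from comparing $H$ with the whole Morse language of $G$; without the combination theorem and this edge-addition construction, your argument does not close.
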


A finitely generated group has the \emph{Morse local-to-global} property if every path in the Cayley graph that is a Morse quasi-geodesic at a sufficiently large local scale is globally a Morse quasi-geodesic; see Definition \ref{defn:Morsel_local_to_global}  for the precise definition.
Gromov showed that hyperbolic spaces are characterized by local quasi-geodesics being global quasi-geodesics \cite{Gromov}; the Morse local-to-global property is  a generalization of this phenomena.
Morse local-to-global groups were introduced by Russell, Spriano, and Tran who showed they include the mapping class group of an orientable, finite type surface, all cocompact $\CAT(0)$ groups, the fundamental group of any closed $3$-manifold, and any group hyperbolic relative to Morse local-to-global groups \cite{Morse-local-to-global}. We will describe below the two places in the proof of Theorem \ref{intro_thm:growth_torsion_free} where the Morse local-to-global property is used.  In both places, we give examples that demonstrate our techniques would not apply to the broader setting of all finitely generated groups that contain infinite Morse geodesics.

If we remove the requirement that $G$ is virtually torsion-free, we can still recover the growth rate gap when the stable subgroup is residually finite. Since stable subgroups are always hyperbolic groups, there  are no known examples of stable subgroups that are not residually finite. 

\begin{introthm}[Growth rate gap for residually finite stable subgroups]\label{intro_thm:growth_res_finite}
	Let $G$ be a Morse local-to-global group and let $H$ be an infinite index stable subgroup of $G$. If $H$ is residually finite, then for any finite generating set $A$ of $G$, the growth rate of $H$ with  respect to $A$ is strictly smaller than the growth rate of $G$ with respect to $A$.
\end{introthm}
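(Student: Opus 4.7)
The plan is to reduce Theorem~\ref{intro_thm:growth_res_finite} to Theorem~\ref{intro_thm:growth_torsion_free} by using residual finiteness of $H$ to pass to a torsion-free finite-index subgroup of $H$, thereby trading the virtually torsion-free hypothesis on $G$ for a hypothesis on $H$.

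First, I would show that $H$ is itself virtually torsion-free and then reduce to its torsion-free finite-index subgroup. Since every stable subgroup is a hyperbolic group, $H$ is hyperbolic, and hence has only finitely many conjugacy classes of finite subgroups. Picking a representative from each non-trivial such class gives a finite list of torsion elements with the property that every non-identity torsion element of $H$ is conjugate to one on the list. Residual finiteness then produces, for each representative, a finite-index subgroup of $H$ missing it; intersecting these and passing to the normal core yields a finite-index normal torsion-free subgroup $H_0 \trianglelefteq H$. Because $H_0$ has finite index in $H$, the exponential growth rates with respect to $A$ coincide, so $\lambda_{H_0,A} = \lambda_{H,A}$. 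Moreover $H_0$ is still stable in $G$ (a finite-index subgroup of a stable subgroup is stable) and still of infinite index in $G$, so it suffices to establish $\lambda_{H_0,A} < \lambda_{G,A}$ with $H_0$ torsion-free.

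The crux is then to run the proof of Theorem~\ref{intro_thm:growth_torsion_free} without the virtually torsion-free hypothesis on $G$. I would expect that hypothesis to be used in Theorem~\ref{intro_thm:growth_torsion_free} solely in order to extract a torsion-free finite-index subgroup of the stable subgroup, so that the automatic structure machinery on Morse geodesics can be applied without interference from torsion inside $H$. Since $H_0$ is already torsion-free, that reduction becomes unnecessary, and the remainder of the argument — driven by the Morse local-to-global property and the automatic structures developed earlier in the paper — should apply verbatim to give the strict inequality. The main obstacle is to verify that the virtually torsion-free hypothesis on $G$ enters the proof of Theorem~\ref{intro_thm:growth_torsion_free} only at this reduction step and not in any essentially $G$-global way; if some step secretly required torsion-free Morse elements of $G \setminus H$, one would have to replace it by a separate construction, for instance by exploiting the density of attracting fixed points of Morse elements in the Morse boundary (advertised in the abstract) to produce infinite-order Morse elements of $G$ escaping $H$.
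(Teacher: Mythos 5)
There is a genuine gap at the crux of your argument. In the paper, the virtually torsion-free hypothesis of Theorem~\ref{intro_thm:growth_torsion_free} is \emph{not} used to make the stable subgroup torsion-free; it is used to make the ambient group torsion-free. Both growth theorems are funneled through Proposition~\ref{prop:growth_gap_technical}, whose input is a finite-index subgroup $H'\leq H$ and a nontrivial $g\in G$ with $\langle H',g\rangle\cong H'\ast\langle g\rangle$, and in the torsion-free case this free-product splitting comes from the combination theorem of Russell--Spriano--Tran (Theorem~\ref{thm: free factor}), whose hypothesis is that the ambient Morse local-to-global group itself is torsion-free. Passing to a torsion-free finite-index subgroup $H_0\leq H$ (which your first paragraph correctly produces, and which does preserve stability, infinite index, and the growth rate by the finite-index argument of Claim~\ref{lem:finite index implies same growth}) does nothing to remove torsion from $G$, so the proof of Theorem~\ref{intro_thm:growth_torsion_free} cannot be rerun ``verbatim'': the step that produces $g$ with $\langle H_0,g\rangle\cong H_0\ast\langle g\rangle$ is exactly the step that breaks. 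Your fallback --- using density of attracting fixed points of Morse elements to find infinite-order Morse elements of $G$ escaping $H$ --- also does not close this gap, since merely having a Morse element $g\notin H$ gives no free-product decomposition; some combination theorem is still required.

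This is where residual finiteness of $H$ is actually needed, and in a stronger way than you use it. The paper invokes a different combination theorem (Theorem~\ref{thm:res_finit_combination}, a special case of \cite[Corollary 3.4]{Morse-local-to-global}): given a residually finite stable subgroup $H$ and a Morse element $g$ with $H\cap\langle g\rangle=\{e\}$, there exist a finite-index $H'\leq H$ and $n\in\mathbb{N}$ with $\langle H',g^n\rangle\cong H'\ast\langle g^n\rangle$. Here residual finiteness is used to pass to deep finite-index subgroups of $H$ enabling the ping-pong, not merely to kill torsion in $H$. The remaining ingredient is the element $g$: the paper takes an infinite-order $h\in H$ (hence a Morse element of $G$), uses finite width of infinite-index stable subgroups to find $q\in G$ with $H\cap qHq^{-1}$ finite, and sets $g=qhq^{-1}$, so that $H\cap\langle g\rangle=\{e\}$. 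Feeding the resulting $H'$ and $g^n$ into Proposition~\ref{prop:growth_gap_technical} gives $\lambda_{H,A}<\lambda_{G,A}$. So your proposal identifies the right reduction target (a finite-index subgroup plus a free-product partner) but misdiagnoses where the torsion-free hypothesis enters, and the reduction to Theorem~\ref{intro_thm:growth_torsion_free} as stated does not go through.
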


In several specific classes of Morse local-to-global groups, stable subgroups correspond with independently studied subgroups. For example, Koberda, Mangahas, and Taylor prove the stable subgroups of right-angled Artin groups are the purely loxodromic subgroups \cite{KoberdaMangahasTaylor2017}, while  Durham and Taylor showed that the stable subgroups of the mapping class group are precisely the convex cocompact subgroups of Farb and Mosher \cite{DurhamTaylor2015}. Because the mapping class group is virtually torsion-free \cite{primer},  we can therefore apply Theorem \ref{intro_thm:growth_torsion_free} to produce a definite growth rate gap for convex cocompact subgroups of the mapping class group.

\begin{introcor}[Growth rate gap for convex cocompact subgroups]\label{intro_cor:MCG_case_of_Growth}
	Let $G$ be the mapping class group of an orientable, finite type surface and let $H$ be a convex cocompact subgroup of $G$. For any finite generating set $A$ of $G$, the growth rate of $H$ with respect to $A$ is strictly less than the growth rate of $G$ with respect to $A$.
\end{introcor}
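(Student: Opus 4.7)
The plan is to derive Corollary~\ref{intro_cor:MCG_case_of_Growth} as a direct application of Theorem~\ref{intro_thm:growth_torsion_free}; almost all the work is in verifying that the three hypotheses are in place for the mapping class group setting. Given a finite generating set $A$ of $G$, I would check: (i) $G$ is finitely generated and virtually torsion-free, (ii) $G$ has the Morse local-to-global property, (iii) $H$ is a stable subgroup of $G$, and (iv) $H$ has infinite index in $G$.

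For (i) I would invoke the standard fact (cited in the paper as \cite{primer}) that the level-$3$ congruence subgroup is a torsion-free finite-index subgroup of the mapping class group. For (ii) I would cite the result of Russell, Spriano, and Tran listed in the discussion following Theorem~\ref{intro_thm:growth_torsion_free}. For (iii) I would appeal to Durham and Taylor's theorem \cite{DurhamTaylor2015} identifying the stable subgroups of the mapping class group with precisely the convex cocompact subgroups in the sense of Farb--Mosher; this is the one step that uses something specific to $G$ rather than the general Morse local-to-global framework.

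The only real subtlety is (iv), which is not explicitly stated in the corollary: convex cocompact subgroups are hyperbolic, so when the mapping class group itself is not hyperbolic (i.e.\ whenever the surface has complexity large enough), any convex cocompact subgroup is automatically of infinite index, and in the remaining low-complexity cases where $G$ is hyperbolic the conclusion follows either trivially (when $H = G$ both sides are equal, so the statement is understood for proper $H$) or from the original Dahmani--Futer--Wise result \cite{DahmaniFuterWise2018}. With (i)--(iv) in hand, Theorem~\ref{intro_thm:growth_torsion_free} gives $\lambda_{H,A} < \lambda_{G,A}$ for every finite generating set $A$, which is exactly the content of the corollary. There is no genuine new obstacle beyond assembling these ingredients, since the hard analytic work has been absorbed into Theorem~\ref{intro_thm:growth_torsion_free}.
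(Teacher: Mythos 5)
Your argument is correct and is essentially the paper's own derivation: Corollary \ref{intro_cor:MCG_case_of_Growth} follows by applying Theorem \ref{intro_thm:growth_torsion_free}, using that the mapping class group is virtually torsion-free, has the Morse local-to-global property by Russell--Spriano--Tran, and that by Durham--Taylor its stable subgroups are precisely the convex cocompact subgroups. Your step (iv) addresses an infinite-index subtlety the paper leaves implicit, and your main resolution (convex cocompact subgroups are stable, hence hyperbolic, hence of infinite index whenever $G$ is not hyperbolic) is sound; in the residual low-complexity cases it is cleaner to observe that a proper finite-index subgroup of an infinite mapping class group contains powers of Dehn twists and therefore is never convex cocompact, rather than to invoke Dahmani--Futer--Wise, whose gap theorem also requires infinite index.
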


Corollary \ref{intro_cor:MCG_case_of_Growth} is a companion result---achieved by very different means---to a theorem of Gekhtman \cite{Gekhtmen_convex_cocompact}.  Gekhtman establishes  a growth rate gap between the orbit of a convex cocompact subgroup and the entire mapping class group in Teichm\"uller space. Corollary \ref{intro_cor:MCG_case_of_Growth} proves the same growth rate gap  appears  in the Cayley graph of the mapping class group with respect to any finite generating set.

Dahmani, Futer, and Wise's proof of the growth rate gap for quasi-convex subgroups of hyperbolic groups rests on a  seminal result of Cannon that the geodesic words in a hyperbolic group form a regular language, regardless of choice of generating set.  Accordingly, the key  step in establishing the gap for Morse local-to-global groups is to prove that for each Morse gauge $M$, the language of $M$-Morse geodesic words in the group is contained in a regular language whose elements are all uniformly $M'$-Morse.

\begin{introthm}[Regular languages containing Morse geodesics]\label{intro_thm:regular_language_for_Morse_Geodesics}
	Let $G$ be a finitely generated group with the Morse local-to-global property and let $A$ be a finite generating set. For each Morse gauge $M$, there exists a regular language $L_M$ such that	
	\begin{enumerate}
		\item every $M$-Morse geodesic word of $G$ is contained in $L_M$;
		\item every element of $L_M$ is an $M'$-Morse geodesic word where $M'$ is determined by $M$.		
	\end{enumerate}
	Further, for any choice of total order on the generating set $A$, the short-lex sublanguage of $L_M$ is also regular.
\end{introthm}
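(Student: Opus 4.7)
The plan is to adapt Cannon's cone-type construction for hyperbolic groups, with the Morse local-to-global property playing the structural role that hyperbolicity plays in Cannon's theorem. First, I would invoke the Morse local-to-global property applied to the gauge $M$ and to $(1,0)$-quasi-geodesics to produce a local scale $L = L(M)$ and a coarser Morse gauge $M'$ such that every path in the Cayley graph of $G$ whose length-$L$ subpaths are all $M$-Morse geodesics is globally an $M'$-Morse quasi-geodesic.

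Next I would define, for each $g \in G$ that is the endpoint of an $M$-Morse geodesic from the identity, an $M$-Morse cone type $\tau_M(g)$: the labeled ball $B_L(g)$ together with the marking indicating which elements of that ball lie on an $M$-Morse geodesic through $g$. The technical heart of the argument is a bounded-cone lemma stating that if $g$ and $h$ have identical marked labeled $L$-balls, then they admit the same set of continuations as endpoints of $M$-Morse geodesics (possibly with gauge degraded to $M'$). The proof of this lemma uses Morse stability to transport an $M$-Morse geodesic continuation from one side to the other, and then Morse local-to-global to upgrade the resulting path, which is $L$-locally $M$-Morse by construction, back to a globally Morse geodesic. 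Since finitely many labeled marked $L$-balls exist, the number of cone types is finite.

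With cone types in hand, I would construct a deterministic finite automaton $\mathcal{A}$ whose states are cone types together with a dead state, initialized at $\tau_M(1)$. The transition from $\tau_M(g)$ on letter $a$ goes to $\tau_M(ga)$ when the extension is consistent with the cone-type data (so that the path $1 \to g \to ga$ is locally an $M$-Morse geodesic), and to the dead state otherwise; all non-dead states are accepting. Let $L_M$ be the accepted language. Property (1) follows because every prefix of an $M$-Morse geodesic word realizes a legal cone-type transition, as subsegments of $M$-Morse geodesics are themselves $M$-Morse geodesics. For property (2), any accepted word is $L$-locally an $M$-Morse geodesic, hence globally an $M'$-Morse quasi-geodesic by the first step, and the induction underlying the cone-type evolution guarantees that at each step the distance from identity increases by exactly one, so the accepted word is a geodesic. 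Finally, the short-lex sublanguage is obtained by intersecting $L_M$ with a regular language of short-lex-minimal representatives, built by a standard multiplier-automaton argument that exploits the fact that two $M'$-Morse geodesics with the same endpoints remain within bounded distance of each other.

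The principal obstacle will be the bounded-cone-type lemma, where Morse local-to-global substitutes for hyperbolicity in Cannon's original argument; the delicate point is tracking how the Morse gauge may degrade from $M$ to $M'$ when transporting extensions between elements with identical cone-type data, and ensuring that this degradation is absorbed uniformly so that the resulting family of cone types is genuinely finite rather than merely locally finite.
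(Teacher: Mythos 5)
Your skeleton is the paper's: Cannon-style cone types made finite via the Morse local-to-global property, an automaton whose states are cone types, and a short-lex refinement via an equality-recognizer automaton exploiting fellow traveling of $M'$-Morse geodesics. But the step you yourself flag as the ``principal obstacle'' is a genuine gap, and as formulated your bounded-cone lemma would fail. You define the cone data of $g$ in terms of \emph{global} $M$-Morse geodesics, and your lemma only asserts that elements with identical marked $L$-balls admit the same continuations ``possibly with gauge degraded to $M'$''. That is not an equality of cone types: transporting an $M$-Morse continuation from $g$ to $h$ produces a path that is only locally $M$-Morse, hence globally $M'$-Morse with $M' \geq M$, so you cannot conclude that the $M$-cone of $h$ equals the $M$-cone of $g$, and iterating the transport in the induction degrades the gauge again at every step ($M \to M' \to M'' \to \cdots$), so a finite set of states is never achieved. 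The paper's resolution---absent from your proposal---is to change the definition: the $M$-Morse cone of $u$ consists of the words $w$ such that $uw$ is a geodesic word that is $(B_M;M)$-\emph{locally} Morse (Definition \ref{defn:Morse_cones}), and $L_M$ is the language of $(B_M;M)$-locally Morse geodesic words (Definition \ref{defn:L_M}). Local Morseness at the fixed scale $B_M$ is preserved under the concatenations appearing in the induction, so no gauge degradation enters the cone-type argument at all; the passage from $M$ to $M'$ is applied exactly once, at the very end, to conclude item (2). This is precisely why $L_M$ is taken to be larger than the set of globally $M$-Morse geodesic words.

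A second, smaller gap: your automaton transitions and your claim that ``the distance from identity increases by exactly one at each step'' do not follow from the marked labeled $L$-ball alone, since a labeled ball around $g$ carries no information about the distance from $g$ back to the identity. To certify that the extended word remains geodesic one needs, in addition to the restricted cone, the $N$-tail of $u$ (the bounded-length elements that shorten $u$, Definition \ref{def: Cay k-tail}) together with a fellow-traveling argument for Morse geodesics (Lemma \ref{lem: fellow traveling}): a hypothetical shorter representative of $\overline{uwa}$ must pass near $\overline{u}$, producing an element of the common tail and hence a shorter representative on the $v$ side, a contradiction with $vwa$ being geodesic. This is the first and longer half of the proof of Theorem \ref{thm: finite many cone types}, and it is missing from your outline. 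Your short-lex step, by contrast, matches the paper's (an equality recognizer whose states record the bounded difference of the two prefixes, followed by the Baumslag--Gersten--Shapiro--Short lexicographic-minimum construction) and is fine in outline.
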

Theorem \ref{intro_thm:regular_language_for_Morse_Geodesics} provides an extension of a result by Eike and Zalloum  who showed that strongly contracting geodesics with a fixed parameter form a regular language in any finitely generated group \cite{EikeZalloum}. Strongly contracting  geodesics are a stronger form of hyperbolic-like behavior in a group compared to Morse geodesics. In particular, being contracting is intrinsically a local property,  while Morse geodesics are not in general determined locally. A more detailed comparison of Morse and contracting geodesics in the context of this paper is given in Subsection \ref{subsec:contracting}.

The bridge from Theorem \ref{intro_thm:regular_language_for_Morse_Geodesics} to our results on growth is the following  description of stable subgroups of Morse local-to-global groups using regular languages. This is a Morse geodesic version of the Gersten and Short result that a subgroup $H$ of a hyperbolic group is quasi-convex if and only if the language of geodesic words representing $H$ is regular \cite{GerSho1991}.

\begin{introthm}[Regular language description of stability]\label{intro_thm:stable_subgroups}
	Let $G$ be a Morse local-to-global group generated by the finite set $A$. For a subgroup $H < G$, let $L_H$ be the language of geodesic words representing elements of $H$. The subgroup $H$ is stable if and only if $L_H$ is a regular language all of whose elements are $M$-Morse for some Morse gauge $M$. Moreover, if $H$ is stable, there exists a sublanguage $J_H \subseteq L_H$ so that $J_H$ is regular and each element of $H$ is represented by exactly one word in $J_H$.
\end{introthm}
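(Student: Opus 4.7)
The plan is to prove the two implications separately and then obtain $J_H$ by combining the resulting regularity of $L_H$ with the short-lex structure from Theorem~\ref{intro_thm:regular_language_for_Morse_Geodesics}. The forward direction relies on that theorem together with a coset-tracking construction; the reverse direction uses regularity of $L_H$ to recover both quasi-convexity of $H$, via a reachability argument in the automaton, and a quasi-isometric embedding $H \hookrightarrow G$, via bounded-length generators extracted from the automaton.

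For the forward direction, assume $H$ is stable, so there exist a Morse gauge $M$ and a constant $K$ such that every geodesic between points of $H$ is $M$-Morse and stays in the $K$-neighborhood of $H$. Then $L_H \subseteq L_M$, the regular language provided by Theorem~\ref{intro_thm:regular_language_for_Morse_Geodesics}; let $\mathcal{A}_M$ be a finite state automaton accepting $L_M$. I would construct an automaton $\mathcal{A}_H$ accepting $L_H$ with state set $Q(\mathcal{A}_M) \times \mathcal{C}$, where $\mathcal{C}$ is the finite set of right $H$-cosets admitting a representative in the ball $B_K$ of radius $K$ around the identity. Transitions act on the first coordinate via $\mathcal{A}_M$ and on the second by right multiplication by the generator, going to a sink whenever the new coset leaves $\mathcal{C}$. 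Accept states are those $(q, H)$ with $q$ accepting in $\mathcal{A}_M$. Correctness follows because $K$-quasi-convexity of $H$ guarantees every prefix of a word in $L_H$ has its coset in $\mathcal{C}$, and the coset is trivial exactly when the prefix represents an element of $H$.

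For the reverse direction, assume $L_H$ is regular with every element $M$-Morse, and let $\mathcal{A}$ be a trimmed $N$-state automaton accepting $L_H$. For any $w \in L_H$ and prefix $w'$ ending at state $q$, there is a continuation $v$ with $|v| \leq N$ carrying $q$ to an accept state, so $w'v \in H$ and $w'$ is within $N$ of $H$, giving $N$-quasi-convexity of $H$. Using this, for each $h \in H$ with geodesic $w = a_1 \cdots a_n \in L_H$, choose at each prefix $w_i$ a continuation $v_i \in H$ of length at most $N$; then $h = v_0 \cdot s_1 \cdots s_n$ where $s_i = v_{i-1}^{-1} a_i v_i \in H$ has $A$-length at most $2N+1$. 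The finite set $S = \{ g \in H : |g|_A \leq 2N+1 \}$ therefore generates $H$, and the inequalities $|h|_S \leq n+1$ and $|h|_A \leq (2N+1)|h|_S$ provide the required quasi-isometric embedding. Since $L_H$ contains every geodesic representative of every $H$-element, the uniform Morseness hypothesis makes every such geodesic $M$-Morse, completing the proof of stability.

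For $J_H$, I take the short-lex sublanguage of $L_H$. By the stability established in the forward direction, every geodesic representing every element of $H$ is $M$-Morse, so the short-lex minimal geodesic representative of each $h \in H$ lies simultaneously in $L_H$ and in the short-lex sublanguage of $L_M$. Thus $J_H$ equals the intersection of $L_H$ with the short-lex sublanguage of $L_M$, which is regular as an intersection of two regular languages by Theorem~\ref{intro_thm:regular_language_for_Morse_Geodesics} and the forward direction, and contains exactly one word per element of $H$. The main obstacle I anticipate is the reverse direction, where combinatorial regularity of $L_H$ must be translated into genuinely geometric quasi-convexity and the quasi-isometric embedding; the reachability and generator-extraction arguments need to be carried out carefully, since Morse geodesics can behave less uniformly than their hyperbolic counterparts.
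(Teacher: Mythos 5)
Your proposal is correct and follows essentially the same route as the paper: the forward direction via a coset-tracking automaton intersected with $L_M$ (the paper's Lemma \ref{lem:quasiconvex_subgroups_regular_language} combined with Corollary \ref{cor: Morse geodesic form regular langauge}), the converse via the pruned-automaton reachability argument (Lemma \ref{lem:regular_languages_of_representative_implies_quasi-convex}), and $J_H$ obtained by intersecting with the short-lex language $J_M$ of Corollary \ref{cor: lexographically least Morse geodesics}. The only divergence is your additional finite-generation/quasi-isometric-embedding argument in the reverse direction, which is sound but unnecessary here, since Definition \ref{def:stable_subgroup} only asks that all geodesics from $e$ to elements of $H$ be uniformly Morse and lie in a bounded neighborhood of $H$---exactly what your reachability argument and the Morseness hypothesis already provide.
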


Theorems \ref{intro_thm:regular_language_for_Morse_Geodesics} and \ref{intro_thm:stable_subgroups} make up the core machinery this paper and lay a foundation to extend other results from the theory of automatic groups. In Subsection \ref{subsec:problems}, we discuss a number of open problems in this direction.

Applying a classic result on the rationality of regular languages to the language $J_H$ in Theorem \ref{intro_thm:stable_subgroups}, we have that stable subgroups of Morse local-to-global groups have rational growth function. For the mapping class group, this answers a question of Farb in the case of convex cocompact subgroups \cite[Chapter 2, Question 3.14]{FarbProblems}.

\begin{introcor}[Stable subgroups have rational growth]
Let $H$ be a stable subgroup of a Morse local-to-global group $G$. For any finite generating set $A$ of $G$, there exist polynomials $P(x),Q(x) \in \mathbb{Q}[x]$ such that \[\sum_{n=0}^\infty f_{H,A}(n) x^n = \frac{P(x)}{Q(x)}.\]
\end{introcor}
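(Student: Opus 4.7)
The plan is to reduce the corollary to Theorem \ref{intro_thm:stable_subgroups} and then invoke the classical fact that regular languages over a finite alphabet have rational length-generating functions. First, I would apply Theorem \ref{intro_thm:stable_subgroups} to obtain a regular sublanguage $J_H \subseteq L_H$ such that the evaluation map $J_H \to H$ is a bijection. Since $J_H \subseteq L_H$ consists of geodesic words of $G$ with respect to $A$, the length of the unique word in $J_H$ representing $h \in H$ equals the word-metric distance from the identity to $h$ in $\operatorname{Cay}(G,A)$.

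Let $\sigma(n)$ denote the number of elements of $H$ at distance exactly $n$ from the identity, so that $\sigma(n)$ equals the number of length-$n$ words in $J_H$. By a standard application of the Chomsky--Sch\"utzenberger theorem, realized concretely by choosing a deterministic finite automaton recognizing $J_H$, forming its state-transition matrix $T$ with entry $T_{ij}$ equal to the number of letters transitioning state $i$ to state $j$, and reading off $\sigma(n)$ as the $(q_0, F)$-block of $T^n$, one obtains that the spherical generating function $R(x) := \sum_{n=0}^\infty \sigma(n)\, x^n$ is a rational function $R(x) = A(x)/B(x)$ with $A,B \in \mathbb{Q}[x]$.

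Finally, since $f_{H,A}(n) = \sum_{k=0}^n \sigma(k)$, the ball generating function is obtained from $R(x)$ by multiplying by $1/(1-x)$, yielding
\[
\sum_{n=0}^\infty f_{H,A}(n)\, x^n \;=\; \frac{A(x)}{(1-x)\,B(x)},
\]
which is of the required form with $P(x) = A(x)$ and $Q(x) = (1-x)B(x)$ in $\mathbb{Q}[x]$. All of the substantive work is contained in Theorem \ref{intro_thm:stable_subgroups}: once the existence of the regular cross-section $J_H$ is in hand, the remainder is a routine translation between regular languages and rational generating functions, so I do not anticipate any genuine obstacle at this stage of the argument.
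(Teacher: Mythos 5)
Your proposal is correct and follows essentially the same route as the paper: obtain the regular geodesic language $J_H$ bijecting with $H$ from Theorem \ref{intro_thm:stable_subgroups} (Corollary \ref{cor:regular language bijecting with stable subgroup}) and then invoke rationality of growth series of regular languages. The only cosmetic difference is that the paper cites Theorem \ref{thm: rational growth of languages} directly for the cumulative (ball) counts, whereas you re-derive rationality for the sphere counts via a deterministic automaton's transfer matrix and then multiply by $1/(1-x)$; both steps are standard and equivalent.
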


As an example of the possible broader application of having regular languages for Morse geodesics, we prove a pair of results about the dynamics of  the action of a Morse local-to-global group  on its Morse boundary---the Morse geodesic analogue of the Gromov boundary of a hyperbolic group. We show that the set of attracting fixed points of elements generating cyclic stable subgroups, a.k.a \emph{Morse elements}, are dense in the Morse boundary and that the limit set of an infinite normal subgroup is the entire Morse boundary of the ambient group. Both of these results extend long established results for the Gromov boundary of hyperbolic groups.

\begin{introthm}[Dynamics on the Morse boundary]\label{intro_thm:boundary_applications}
Let $G$ be a Morse local-to-global group and let $\partial_{*} G$ denote its Morse boundary.
\begin{enumerate}
	\item The set of attracting fixed points of Morse elements of $G$ is dense in $\partial_{*}G$.
	\item If $H$ is an infinite normal subgroup of $G$, then  the limit set of $H$ in $\partial_{*} G$ is all of $\partial_{*} G$.
\end{enumerate}
\end{introthm}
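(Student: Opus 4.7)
The plan is to deduce both parts from the regular language of Theorem \ref{intro_thm:regular_language_for_Morse_Geodesics}, with Part (2) building on Part (1). For Part (1), I would apply a pumping/pigeonhole argument to the automaton recognizing $L_M$ to extract, from any long $M$-Morse geodesic prefix, a Morse element whose attracting fixed point is close to the given boundary point. For Part (2), I would use Part (1) to find a Morse element of $G$ whose attracting fixed point is close to the target, and then use the normality of $H$ together with a ping-pong construction governed by the Morse local-to-global property to promote it to a Morse element inside $H$.

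For Part (1), fix $\xi \in \partial_* G$ represented by an $M$-Morse geodesic ray $\gamma$ based at the identity, and let $\mathcal{A}$ be a finite automaton with $N$ states accepting $L_M$. For each $n$, the prefix word $u_n$ of length $n$ along $\gamma$ is an $M$-Morse geodesic word, hence lies in $L_M$, and its accepting path visits $n+1$ states in $\mathcal{A}$. A pigeonhole among the last $N+1$ states produces a decomposition $u_n = xyz$ with $|x| \geq n - N$, $|y| \geq 1$, and $xy^k z$ accepted by $\mathcal{A}$ for every $k \geq 0$. By Theorem \ref{intro_thm:regular_language_for_Morse_Geodesics}(2), each $xy^k z$ is an $M'$-Morse geodesic word, and every initial subword of the one-sided infinite word $xy^\infty$ sits inside such a geodesic and is itself $M'$-Morse. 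Thus $xy^\infty$ determines an $M'$-Morse geodesic ray whose endpoint $\xi' \in \partial_* G$ agrees with $\gamma$ for the first $|x| \geq n - N$ letters, placing $\xi'$ arbitrarily close to $\xi$ as $n \to \infty$. Setting $w = xyx^{-1}$ as a group element, the orbit $w^k \cdot e = xy^k x^{-1}$ tracks $xy^\infty$ up to error $|x|$, so $\xi'$ is the attracting fixed point of the Morse element $w$.

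For Part (2), we may assume $G$ is not virtually cyclic, since otherwise $\partial_* G$ has at most two points and the statement is immediate. Fix $\xi \in \partial_* G$ and a neighborhood $U$ of $\xi$, and use Part (1) to choose a Morse element $g \in G$ with attracting fixed point $\xi_g^+ \in U$. A short argument using the infiniteness of $H$, the normality of $H$, and the non-elementarity of $G$ produces some $h \in H$ with $h \cdot \xi_g^+ \neq \xi_g^+$: otherwise $H$ sits in the stabilizer of $\xi_g^+$, and normality forces every conjugate $fgf^{-1}$ to stabilize $\xi_g^+$ as well, contradicting the assumption on $G$. Setting $g' := h g h^{-1}$, which is Morse with attracting fixed point $h \cdot \xi_g^+ \neq \xi_g^+$, normality gives $[g^n, h] = g^n (g')^{-n} \in H$ for every $n \geq 1$. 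A ping-pong argument applied to the two disjoint pairs of endpoints $\{\xi_g^\pm\}$ and $\{h \cdot \xi_g^\pm\}$, combined with the Morse local-to-global property at the junction $g^n$ between long axial segments of $g$ and $(g')^{-1}$, shows that for all sufficiently large $n$ the element $[g^n, h]$ is Morse and its attracting fixed point lies in $U$. Taking high positive powers of such an element yields a sequence in $H$ converging to a point of $U$ in $\partial_* G$, so $\xi$ is in the limit set of $H$.

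The main obstacle is the ping-pong step in Part (2): one must verify that the concatenation representing $[g^n, h]$ is genuinely a Morse quasi-geodesic and that its axis does not drift away from $\xi_g^+$ as $n$ grows. This is exactly the situation the Morse local-to-global property was designed to handle, since the concatenation is built from two Morse axial pieces that meet at an ``obtuse angle'' guaranteed by the inequality $h \cdot \xi_g^+ \neq \xi_g^+$. By contrast, the analogous verification in Part (1) is essentially free: each initial subword of $xy^\infty$ is automatically $M'$-Morse because it lives inside a word of the regular language $L_M$.
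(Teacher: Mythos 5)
Your Part (1) is essentially the paper's own argument (Theorem \ref{thm: algebraic density of Morse boundary}): pigeonhole/pumping on the automaton accepting $L_M$, pumping the repeated state to get the periodic ray $xy^\infty$, and conjugating the pumped piece to get a Morse element whose attracting point fellow-travels the given ray arbitrarily far. That part is fine.

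Part (2) is where the proposal departs from the paper, and it has genuine gaps. First, the existence of $h\in H$ with $h\cdot\xi_g^{+}\neq\xi_g^{+}$ is not justified: if $H\le\operatorname{Stab}(\xi_g^{+})$, normality of $H$ gives $H\le\operatorname{Stab}(f\cdot\xi_g^{+})$ for every $f\in G$, not that the conjugates $fgf^{-1}$ (which need not lie in $H$) fix $\xi_g^{+}$, so the claimed contradiction with $G$ being non-elementary does not follow. Ruling out an infinite normal subgroup fixing a Morse boundary point requires real input about point stabilizers in the Morse boundary (or about infinite normal subgroups containing Morse elements), and this is exactly the nontrivial ingredient the paper imports from \cite{Morse-local-to-global} in the proof of Theorem \ref{thm: normal subgroups fill}: an infinite normal subgroup of a Morse local-to-global group already contains a Morse element $h$. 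Your route tries to bypass that citation, but the bypass is precisely the unproven step. Second, the ping-pong for $[g^n,h]=g^n(g')^{-n}$ with $g'=hgh^{-1}$ needs angle control at \emph{both} kinds of junctions along the orbit of $\langle [g^n,h]\rangle$: where a $(g')^{-n}$-segment is followed by a $g^n$-segment the relevant condition is $h\cdot g^{+}\neq g^{+}$, but where a $g^n$-segment is followed by a $(g')^{-n}$-segment one needs $h\cdot g^{-}\neq g^{-}$, which you never secure (and upgrading $h g^{+}\neq g^{+}$ to this would again need facts about fixed-point sets of Morse elements that are not established here). So the claim that $[g^n,h]$ is Morse with attracting point in $U$ is not proved, and carrying it out would be a substantial local-to-global concatenation argument that the paper deliberately avoids: it instead applies the known north--south dynamics of $g$ (Lemma \ref{lem: North-south dynamics}) to the point $h^{+}$, observes $g^mh^ig^{-m}\in H$ by normality so $g^mh^{+}\in\Lambda_{*}(H)$, and finishes with closedness of limit sets. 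Finally, a smaller but real omission: your last step only produces points of $\Lambda_{*}(H)$ in every neighborhood of $\xi$; concluding $\xi\in\Lambda_{*}(H)$ requires that limit sets are closed in the direct limit topology (the paper's Lemma \ref{lem: limit set is closed }), which is not automatic from the definition of $\Lambda_{*}(H)$ since that definition demands a single Morse gauge for the approximating geodesics.
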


A simple, yet interesting, corollary of Theorem \ref{intro_thm:boundary_applications} is that if  the normal subgroup $H$ is a hyperbolic group, then there cannot exist a Cannon--Thurston map from $\partial_{*} H$ to $\partial_{*} G$; see Corollary \ref{cor:no_cannon-thurston}. In particular, the Birman exact sequence does \emph{not} produce a Cannon--Thurston map from the boundary of the fundamental groups of a surface to the Morse boundary of the mapping class group of that surface with an additional puncture.
This result gives a stark difference between the Morse boundary and the Gromov boundary were the existence of such maps was established by Mj \cite{Mitra1998}.

\subsection{Outline of the proofs}\label{subsec:outline} The proof of our main  result, the containment of $M$-Morse geodesic words in a regular language all of whose elements are $M'$-Morse (Theorem \ref{intro_thm:regular_language_for_Morse_Geodesics}), aims to adopt Cannon's proof that the geodesic words in a hyperbolic group form a regular language to the setting of Morse geodesics \cite{Cannon1984}. The key step in Cannon's proof is proving that a hyperbolic group has a finite number of \emph{cone types}. The first approximation to this approach using Morse geodesics would be to define an \emph{$M$-Morse cone type} that records the ways of continuing an $M$-Morse geodesic word to produce an $M$-Morse geodesic word. However, despite the presence of the Morse local-to-global property, Cannon's proof of finitely many cone types fails in this case. The primary difficulty is that the Morse gauge $M$ may increase when applying the Morse local-to-global property. That is, it is only possible to ensure a geodesic that is locally $M$-Morse is globally $M'$-Morse for some $M'$ depending on $M$. 

To circumvent this problem, we define the $M$-Morse cones to keep track of the Morse gauge of the geodesics at a local instead of global scale (Definition \ref{defn:Morse_cones}). We then rely on the Morse local-to-global property to ensure that the locally $M$-Morse geodesics possess enough features of hyperbolicity to  allow for Cannon's proof to be adopted nearly verbatim (Theorem \ref{thm: finite many cone types}).  The cost of this choice is that the obtained regular language $L_M$ contains more words than the na{\"i}ve approach would suggest---$L_M$ is the set of the words that are locally $M$-Morse, but not necessarily globally $M$-Morse. However, since the Morse local-to-global property  ensure that there is uniform Morse gauge $M'$ so that each element of $L_M$ is globally $M'$-Morse, this  does not end up producing any difficulties in the applications.

The regularity of $L_M$ is the first essential place where the Morse local-to-global property is used in our work. If there is a regular language $L$ containing all of the $M$-Morse geodesic words in a group $G$ so that every element of $L$ is uniformly Morse, then $G$  must contain an infinite order (Morse) element; see Remark \ref{rem:infinite_order_element}. Examples of Fink  show that there exist finitely generated, infinite, torsion groups that have infinite Morse geodesics \cite{Fink}, and hence  there is no hope for Theorem \ref{intro_thm:regular_language_for_Morse_Geodesics}  to hold in the context of any group with infinite Morse geodesics.

In our description of stable subgroups (Theorem \ref{intro_thm:stable_subgroups}),  the backward direction does not require the Morse local-to-global property as it is  a special case of the general observation that if a language of words representing elements of a subgroup is regular, then that subgroup is quasi-convex with respect to the paths  determined by the words in the language (Lemma \ref{lem:regular_languages_of_representative_implies_quasi-convex}). For the forward direction, we show that for any group, the language of all words that represent elements of the the subgroup $H$ and whose paths stay within a fixed distance from $H$ is regular (Lemma \ref{lem:quasiconvex_subgroups_regular_language}). When $H$ is stable and $G$ has the Morse local-to-global property, intersecting this language with the regular language $L_M$ for the right Morse gauge $M$ will produce the language of geodesic words representing element of $H$.

Our growth rate results (Theorems \ref{intro_thm:growth_torsion_free} and \ref{intro_thm:growth_res_finite}) follow a commonly used approach.  Given an infinite index stable subgroup $H$, we use either the virtual torsion-freeness of $G$ or the residually finiteness of $H$ to produce an element $g \in G$ and a finite index subgroups $H' < H$ so that $\langle H',g\rangle \cong H' \ast \langle g \rangle$. In both cases, this relies on combination theorems for Morse local-to-global groups proved by Russell, Spriano, and Tran \cite{Morse-local-to-global}. Since finite index subgroups of stable subgroups are stable, we can apply Theorem \ref{intro_thm:stable_subgroups} to $H'$ to produce a regular language $J_{H'}$ of geodesic words that uniquely represent the elements of $H'$. Since $\langle H',g\rangle \cong H' \ast \langle g \rangle$, we can copy a construction of Dahmani, Futer, and Wise to extend $J_{H'}$ to a regular language $J^{+g}$ for $\langle H',g\rangle$. Continuing to follow  Dahmani, Futer, and Wise, we apply some basic Perron--Frobenius theory to prove that the growth rate of words in $J_{H'}$ is strictly less than the growth rate for words in $J^{+g}$. Since these languages biject onto the subgroups $H'$ and  $\langle H',g\rangle$, the growth rates of these languages are the same as the growth rates of the subgroups, producing the desired inequality.

The aforementioned combination theorems for stable subgroups is the second essential place we use the Morse local-to-global property. Osin, Ol'shanskii, and Sapir  have produced an example of a finitely generated group that contains many infinite Morse geodesics,  but cannot satisfy  the combinations theorems as every proper subgroup is cyclic \cite{OOS_Lacunary_hyperbolic_groups}. These combination theorems also introduce our need for the additional hypotheses on either $G$ or $H$. When $G$ is hyperbolic, Dahmani, Futer, and Wise use ping-pong on the Gromov boundary to produce a combination theorem for all hyperbolic groups that allows them to run a modified version of the above argument. The na\"ive approach of extending their proof of this combination theorem cannot be generalized to Morse local-to-global groups because the Morse boundary fails to be compact if the group is not hyperbolic \cite{Murray2015TopologyAD,CordesDurham2017}.

\subsection{A comparison with growth tightness and contracting geodesics}\label{subsec:contracting}

For hyperbolic groups, there are different approaches to prove the growth rate gap for quasi-convex subgroups. In particular, one can use growth tightness techniques that 
rely on strongly contracting geodesics \cite{ACT_growth_tightness, DahmaniFuterWise2018}. As mentioned above, strongly contracting geodesics are another notion of geodesics with hyperbolic-like behavior. 
Although  contracting geodesics coincide with Morse geodesics in hyperbolic and CAT(0) spaces, in general the former condition is strictly stronger.
Notably, Morse geodesics
are preserved under quasi-isometry whereas contracting geodesics are not.
This constitutes one  the main advantages of considering Morse quasi-geodesics, as our results are automatically independent of the choice of generating set. 

Morse geodesics are also more widespread  than contracting geodesics. For instance the mapping 
class group is known to be a Morse local-to-global group, but is not known whether, given a specific generating set, its Cayley graph contains any contracting geodesics. In fact, Rafi and Verberne have shown that, for a certain choice of generating set, there are Morse geodesics in the mapping class group of a 5-punctured sphere that are not strongly
 contracting \cite{Rafi_Verberne_contracting}. 

\subsection{Open Problems}\label{subsec:problems}

\begin{problem}\label{prob:language_and_hyp_groups}
 Gersten and Short proved that if $H$ is a subgroup of a hyperbolic group, then $H$
 is quasi-convex if and only if the language $L_H$ of all geodesic words that start at $e$ and end in $H$ is regular \cite[Theorem 3.1]{GerSho1991}. This theorem exhibits an intrinsic connection between quasi-convexity properties and regular languages  in hyperbolic groups, and inspires the question of whether or not stable subgroups of Morse local-to-global groups can similarly be characterized by regular languages without any external appeal to a Morse gauge. We propose the following conjecture  to resolve this question: For a Morse local-to-global group $G$ with finite generating set $A$, let $H$ be a subgroup of $G$ and $L_H^{(k, c)}$ be the language of $(k,c)$-quasi-geodesic words that start at $e$ and end in $H$.  We conjecture that

\begin{center}
 $H$ is stable $\iff$    $L_H^{(k, c)}$ is regular whenever $k$ is rational.
\end{center}

 This conjecture is inspired by a result of Holt and Rees that showed for hyperbolic groups, the language $L^{(k,c)}$ of all $(k,c)$-quasi-geodesic words is regular if and only if $k$ is rational \cite{HoltRees2003}. Since stable subgroups can be characterized as hyperbolic subgroups with a strong quasi-convexity property (see Theorem \ref{thm:stable iff hyperbolic and Morse}), the key step to resolving our conjecture for stable subgroups appears to be proving a converse to Holt and Rees' result. That is, prove that  for a finitely generated group, if for all rational $k$, the language $L^{(k,c)}$ is regular for some finite generating set, then $G$ is hyperbolic. This result for hyperbolic groups appears to be open and quite interesting.
\end{problem}

\begin{problem}
Every regular language is recognized by a finite directed labeled graph.
This suggests that an infinite regular language enjoys some local-to-global properties as it is an infinite object that is produced by a finite object (a finite graph). Therefore, it  is possible that there is a converse to Theorem \ref{intro_thm:regular_language_for_Morse_Geodesics}. For example, one may hope that if the language of $M$-Morse geodesic words in a group $G$ is regular for each Morse gauge $M$, then $G$ has the Morse local-to-global property.
A more fine tuned converse is also possible as the assumption that the language of $M$-Morse geodesic word is regular is strictly stronger than our conclusion of Theorem \ref{intro_thm:regular_language_for_Morse_Geodesics}. In line with the proposed result for hyperbolic groups in Problem \ref{prob:language_and_hyp_groups}, it is also possible that in order to conclude a group is Morse local-to-global, one might actually need the regularity of a  family of  languages for each Morse gauge instead of a single language.
\end{problem}

\begin{problem}
    Intuitively, a regular language over a finite set $A$ can be thought of as a finite uniform algorithm that takes a finite word $w$ in $A$ and decides in a finite time whether or not the word $w$ is in the regular language. In the case of automatic groups, the membership problem for quasi-convex subgroups has a quadratic time solution \cite{Word_Processing,GerSho1991}. Therefore, we pose two questions: Is the membership problem for stable subgroup of Morse local-to-global groups solvable? If so, does Theorem \ref{intro_thm:stable_subgroups} have any implications for the complexity of the membership problem of stable subgroups in Morse local-to-global groups?
\end{problem}

\begin{problem}
Using automatic structures, I. Kapovich produced an  algorithm  that inputs a finite set  $S$ of elements of a hyperbolic group $G$ and halts if  $S$ generates
a quasi-convex subgroup of $G$, but  runs forever otherwise \cite{Ilya}. H. Kim gives various detection
and decidability algorithms for stability of a finitely generated subgroup of mapping
class groups, right-angled Artin groups, and toral relatively hyperbolic groups \cite{Kim2020}.  Possibly with the aid of Theorem \ref{intro_thm:regular_language_for_Morse_Geodesics}, can one  produce detectability algorithms for the stable subgroups of  Morse local-to-global groups? 
\end{problem}

\begin{problem}
Dahmani, Futer, and Wise were able to prove that the growth rate of an infinite index quasi-convex subgroups is strictly smaller than that of the ambient hyperbolic group  with no additional assumptions \cite{DahmaniFuterWise2018}.  Accordingly, we ask if the  torsion-free or residually finite assumptions in our growth rate results can be removed. We also wonder if one could prove that the growth rate of an infinite index stable subgroup is strictly smaller than the growth rate of the ambient group without the Morse local-to-global property. If such a result is possible, it would require a different approach than we take here as the  automatic structures for Morse geodesic words we build in this paper cannot exist in a general finitely generated group; see Remark \ref{rem:infinite_order_element}.
\end{problem}

\begin{problem}
In the setting of cocompactly cubulated groups, Dahmani, Futer, and Wise also establish a growth rate gap for subgroups that stabilize an essential hyperplane \cite[Theorem 1.3]{DahmaniFuterWise2018}. In the case of a right-angled Artin group  this result is orthogonal to our growth rate results as a subgroup that stabilizes a hyperplane in the universal cover of the Salvetii complex is never stable. However, examples in right-angled Coxeter groups show that it is possible for a hyperplane stabilizer in the Davis complex to be stable. Determining precisely when a hyperplane stabilizer of an cocompactly cubulated group is a stable subgroup appears to be an open and interesting question.
\end{problem}

\begin{problem}
Li and Wise show that when $G$ is the fundamental group of a compact special cube complex,  there is a sequence of infinite index quasi-convex subgroups
whose growth rates converge to the growth rate of $G$ \cite{LiWise2020}.  For a Morse local-to-global group $G$, does there exist  a sequence of infinite index stable subgroups whose growth rates converge to the growth rate of $G$? To our knowledge, this question is  open even for the case of right-angled Artin groups.
\end{problem}

\begin{problem}
 The inclusion of a  hyperbolic subgroup into a hyperbolic group induces a \emph{Cannon--Thurston map} if the  inclusion induces a well-defined continuous map between their Gromov boundaries; see \cite{Mitra1998,Elizabeth}. A surprising  theorem  of Mj proved that Cannon--Thurston maps exist  for any normal hyperbolic subgroup of a hyperbolic group \cite{Mitra1998}. 
 On the other hand, we show that the inclusion of a normal hyperbolic subgroup into a non-hyperbolic Morse local-to-global group never induces an analogous Cannon--Thurston map between their Morse boundaries; see Corollary \ref{cor:no_cannon-thurston} or \cite{BCGGS18} for the case of CAT(0) groups with isolated flats.
 Possibly with aid of Theorem \ref{intro_thm:regular_language_for_Morse_Geodesics},  determine when (if ever) the inclusion of an infinite index non-hyperbolic finitely generated normal subgroup in a non-hyperbolic Morse local-to-global group induces a Cannon--Thurston map for the Morse boundary.
\end{problem}

\subsection{Outline of the paper} Section \ref{sec:background} gives the necessary background information on languages, growth functions and rates, and Morse local-to-global groups.  Section \ref{sec:reg_language_for_Morse_geodesics} contains our construction of automatic structures for Morse geodesic words in Morse local-to-global groups (Theorem \ref{intro_thm:regular_language_for_Morse_Geodesics} ). Section \ref{sec:reg_language_for_stable_subgroups} applies these structures to build automatic structures for stable subgroups of Morse local-to-global groups (Theorem \ref{intro_thm:stable_subgroups}). Section \ref{sec:growth_rate_gap} uses the automatic structures for stable subgroups to prove our growth rate results (Theorems \ref{intro_thm:growth_torsion_free} and \ref{intro_thm:growth_res_finite}). Section \ref{sec:boundary_applications} contains our applications to the Morse boundary (Theorem \ref{intro_thm:boundary_applications}).

\subsection*{Acknowledgments} Cordes and Zalloum would like to thank the Forschungsinstitut f\"ur Mathematik (FIM) at ETH Z{\"u}rich  for supporting Zalloum's visit to ETH where this work was initiated; they would also like to extend a special thank you to Andrea Waldburger at the FIM for helping Zalloum obtain a visa to visit ETH.  Zalloum would like to thank Matthew Haulmark and Ilya Kapovich  for fruitful discussions. 
Russell would like to thank Alessandro Sisto for supporting his visit to ETH where this work was initiated.
Cordes was partially supported by the ETH Z{\"u}rich Postdoctoral Fellowship Program, cofunded by a Marie Curie Actions for People COFUND Program. 
Spriano was partially supported by the Swiss National Science Foundation (grant {\#}182186). We are very grateful to the anonymous referee for their many thoughtful comments and for asking a question that lead to Corollary \ref{cor:no_cannon-thurston}.

\section{Background}\label{sec:background}
\subsection{Languages, groups, and finite state automata}

The core objects of this paper are languages with finite alphabets.

\begin{definition}[A language with a finite alphabet]
Let $A$ be a finite set and $A^\star$ be the free monoid over $A$. We call an element $w \in A^\star$ a \emph{word} in $A$. If $w \in A^\star$ and $w = a_1\cdots a_n$ where each $a_i \in A$, then we call $a_1,\dots,a_n$ the \emph{letters} of the word $w$.  A \emph{language with alphabet $A$} is a set of words in $A^\star$.  The \emph{word length} of  $w \in A^\star$ is the number of letters of $A$ in the word $w$. We denote the word length of $w$ by $\ell(w)$.
\end{definition}

The alphabets for the languages we will be working with will be  finite generating sets for groups.

\begin{definition}
Let $G$ be a finitely generated group and let $A$ be a finite, symmetric generating set for $G$. The \emph{Cayley graph of $G$ with respect to $A$}, $\cay(G,A)$, is the graph whose vertices are the elements of $G$ and $g,h \in G$ are joined  by an edge if $g^{-1}h \in A$. If $g^{-1}h \in A$, then we label the edge connecting $g$ and $h$ by $g^{-1}h$.  The Cayley graph $\cay(G,A)$ is a metric space by declaring each edge to have length $1$. 
\end{definition}

\begin{convention}
Henceforth, we will assume that every finite generating set for a group is symmetric, that is, $A = A^{-1}$.
\end{convention}

When the group $G$ is generated by the finite set $A$, then every path in $\cay(G,A)$ produces a word in $A^\star$ by concatenating the labels of the edges in the order they appear along the path. Conversely, every word in $A^\star$ produces a path in $\cay(G,A)$ by starting at the identity $e$ and traversing the edges in $\cay(G,A)$ labeled by the letters of the word appearing from left to right. We will be particularly concerned with words in $A^\star$ that correspond with geodesic paths in $\cay(G,A)$.

\begin{definition}
For a word $w \in A^\star$, let $\overline{w}$ denote the element of $G$ obtained by viewing $w$ as an element of $G$.
A word $w \in A^\star$ is \emph{geodesic} in $\cay(G,A)$ if the path in $\cay(G,A)$ from $e$ to $\overline{w}$ labeled by the letters of $w$ is a geodesic in $\cay(G,A)$. This is equivalent to saying that $\ell(w)$ is minimal among all words that represent $\overline{w}$. For a word $w \in A^\star$, we define the \emph{geodesic word length} of $w$ to be $\ell(\overline{w})$ and denote it $|w|$. Similarly,  for $g \in G$, we define the \emph{geodesic word length} of $g$, denoted $|g|$, to be the geodesic word length of any $w \in A^\star$ with $\overline{w} = g$. 
\end{definition}

For our purposes, a finite state automaton is best understood as a directed graph  with a finite set labeling the edges and the vertices partitioned into an ``accept'' and  a ``reject'' set.

\begin{definition}[Finite state automaton]
Let $A$ be a finite set. A \emph{finite state automaton with alphabet $A$} is a tuple $\mc{G} = (\Gamma,A,Y,s_0)$ where \begin{itemize}
    \item $\Gamma$ is a finite directed graph. We call the vertices of $\Gamma$ the \emph{states} of $\mc{G}$.
    \item Each edge of $\Gamma$ is labeled by an element of  the alphabet $A$.
    \item $Y$ is a subset of the vertices of $\Gamma$.  The vertices of $Y$ are called the \emph{accept states} of $\mc{G}$, while the vertices not in $Y$ are called the \emph{reject states}.
    \item $s_0$ is a vertex of $\Gamma$. We call $s_0$ the \emph{initial state} of $\mc{G}$.
\end{itemize}
We will often conflate a finite state automaton with its underlying graph. For example, we may refer to the edges or vertices of $\mc{G}$ instead of $\Gamma$.
\end{definition}

Every finite state automaton produces an \emph{accepted language} consisting of words produced by reading the labels of each path in the directed graph that starts at the initial state and ends at one of the accept states. These languages are precisely the ones that are regular.

\begin{definition}[Regular Language]
A \emph{directed path} in a finite state automaton $\mc{G} = (\Gamma,A,Y,v_0)$ is a sequence of edges $e_1,\dots , e_n$ of $\Gamma$ so that the terminal vertex of $e_i$ is the initial vertex of $e_{i+1}$ for each $i \in\{1,\dots,n-1\}$. A word $w=a_1\cdots a_n$ in $A$ is \emph{read} by a path  $e_1,\dots,e_n$ in $\mc{G}$ if the label of $e_i$ is  $a_i$ for each $i\in\{1,\dots,n\}$. The \emph{language accepted by $\mc{G}$} is the set of words in $A$ that are read by paths in $\mc{G}$ that start at the initial state $s_0$ and end at an accept state of $\mc{G}$. A language is \emph{regular} if it is the accepted language of some finite state automaton.
\end{definition}

\subsection{Growth of languages and subgroups}
A basic feature of a language is the number of words of a specified word length. This information is often compiled as a growth function.

\begin{definition}[Growth function of a language]
Given a language $L$ with alphabet $A$, the \emph{growth function} of $L$ is the function $f_L(n) \colon \mathbb{N} \to \mathbb{N}$ where \[f_L(n) = | \{w \in L \mid \ell(w) \leq n\}|.\]
\end{definition}

While the growth function of a language captures the pure size of the language, the following growth rate captures the speed at which the language grows relative to the word length. 

\begin{definition}[Growth rate of a language]
Given a language $L$ with alphabet $A$, the \emph{growth rate} of $L$ is denoted $\lambda_L$ and defined to be \[\lambda_L = \limsup_{n \to \infty} \sqrt[n]{f_L(n)}.\]
\end{definition}

For regular languages there is a robust set of tools to understand the growth function of the language. One  foundational fact is that regular languages always have rational growth.

\begin{theorem}[\cite{Word_Processing}]\label{thm: rational growth of languages}
If $L$ is a regular language, then  there exist polynomials $P(x),Q(x) \in \mathbb{Q}[x]$ so that \[\sum_{n=0}^\infty f_L(n) \cdot x^n = \frac{P(x)}{Q(x)}. \]
\end{theorem}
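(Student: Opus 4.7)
The plan is to use the transfer matrix method: represent the accepted language combinatorially via the adjacency matrix of a finite state automaton recognizing $L$, and then exploit the classical fact that the inverse of a matrix with polynomial entries has rational entries.

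First, I would invoke the standard fact (the subset construction) that every regular language is accepted by a \emph{deterministic} finite state automaton. So let $\mc{G}' = (\Gamma', A, Y', s_0')$ be a DFA accepting $L$. Determinism is what ensures that each word in $L$ is read by exactly one directed path in $\mc{G}'$ starting at $s_0'$ and ending in $Y'$, so counting words of length $n$ reduces to counting directed paths of length $n$ in $\Gamma'$ of the appropriate type. Let $N$ be the number of states of $\Gamma'$, and let $M$ be the $N\times N$ integer matrix with $M_{ij}$ equal to the number of directed edges from state $i$ to state $j$. A straightforward induction on $n$ shows that $(M^n)_{ij}$ equals the number of directed paths of length exactly $n$ from $i$ to $j$.

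Second, let $s(n)$ denote the number of words in $L$ of length exactly $n$, so that $f_L(n) = \sum_{k=0}^n s(k)$. If $\mathbf{v}$ is the column vector whose only nonzero entry is a $1$ in the coordinate corresponding to $s_0'$ and $\mathbf{u}$ is the indicator vector of $Y'$, then by the previous paragraph $s(n) = \mathbf{v}^T M^n \mathbf{u}$. Assembling the generating series, I would write
\[\sum_{n=0}^\infty s(n)\, x^n \;=\; \mathbf{v}^T \Bigl(\sum_{n=0}^\infty (xM)^n\Bigr) \mathbf{u} \;=\; \mathbf{v}^T (I - xM)^{-1} \mathbf{u},\]
and then apply Cramer's rule: since every entry of $I - xM$ lies in $\mathbb{Q}[x]$, every entry of $(I-xM)^{-1}$ is a quotient of polynomials in $\mathbb{Q}[x]$, and hence so is $\sum_n s(n) x^n$.

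Finally, I would pass from $s(n)$ to $f_L(n)$. Since $f_L(n)$ is the partial sum $\sum_{k\le n} s(k)$, multiplying by the geometric series gives
\[\sum_{n=0}^\infty f_L(n)\, x^n \;=\; \frac{1}{1-x} \sum_{n=0}^\infty s(n)\, x^n,\]
which is again a quotient of polynomials in $\mathbb{Q}[x]$; clearing denominators produces the required $P(x), Q(x)$. The result is essentially routine once one has the DFA in hand, so the only real subtlety to flag is the role of determinism: a general nondeterministic automaton could read a single word along multiple accepting paths, causing $\mathbf{v}^T M^n \mathbf{u}$ to overcount, and the subset construction is exactly what justifies reducing to the deterministic case at the start.
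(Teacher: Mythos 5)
Your argument is correct and is essentially the standard approach: the paper gives no proof of this statement, citing \cite{Word_Processing}, where the result is obtained by exactly this kind of transfer-matrix/linear-recursion argument for a deterministic automaton. The subtlety you flag---passing to a DFA so that accepted words of length $n$ correspond bijectively to accepting paths of length $n$, after which $(I-xM)^{-1}$ and the factor $\frac{1}{1-x}$ give rationality---is precisely the standard route, so there is nothing genuinely different to compare.
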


Having rational growth is not a comment on the size of
the language, but rather on its simplicity. In fact,  a sequence
of integers $a_n$ has a rational generating function if and only if $a_n$ satisfies a
finite linear recursion \cite{Word_Processing}.

Dahmani, Futer, and Wise proved a folk theorem that Perron--Frobenius theory  can be used to calculate the growth rate of a regular language \cite{DahmaniFuterWise2018}. The motivation for turning to Perron--Frobenius theory  is the observation that if $L$ is a regular language accepted by the automaton $\mc{G}$, then the number of words in $L$ of length at most $n$ is precisely the number of directed paths of length at most $n$ in $\mc{G}$ that start at the initial state and end at some accept state. Thus, one should expect a connection between the growth rate of $L$ and the adjacency matrix of $\mc{G}$.

\begin{definition}[The Perron--Frobenius eigenvalue]
Given a finite directed graph $\Gamma$ with vertices $v_1,\dots,v_n$, the \emph{adjacency matrix for $\Gamma$} is the the $(n\times n)$-matrix whose $(i, j)$-th entry is $1$ if there exists a directed edge connecting the vertex $v_i$ to the vertex $v_j$ and 0 otherwise. The \emph{adjacency matrix for a finite state automaton} $\mc{G} = (\Gamma, A,Y,s_0)$ is the adjacency matrix for the directed graph $\Gamma$.  For a directed graph $\Gamma$ or finite state automaton $\mc{G}$, we define $\rho_\Gamma$ or $\rho_\mc{G}$ to be the  eigenvalue of the adjacency matrix for $\Gamma$ or $\mc{G}$ with the largest absolute value. We call $\rho_\Gamma$ and $\rho_\mc{G}$  the \emph{Perron--Frobenius eigenvalues} of $\Gamma$ and $\mc{G}$ respectively.
\end{definition}

To be able to fully exploit the connection between the adjacency matrix of an automaton $\mc{G}$ and the growth function of the accepted language of $\mc{G}$, we need to require that our automaton  satisfies a minor minimality condition.

\begin{definition}[Pruned automaton]
A finite state automaton is \emph{pruned} if every state is the vertex of some path from the initial state to an accept state. 
\end{definition}

\begin{remark}[Pruning an automaton]
 Given a finite state automaton $\mc{G}$, we can produce a pruned finite state automaton $\mc{G}'$ that has the same accepted language as $\mc{G}$ by deleting all of the states of $\mc{G}$ that do not appear along a path from the initial state to an accept state.
\end{remark}

When the finite state automaton is pruned, Dahmani, Futer, and Wise used classical  results from Perron--Frobenius theory to show that the growth rate of the accepted language of the automaton is equal the Perron--Frobenius eigenvalue of the automaton.

\begin{theorem}[{\cite[Theorem 3.6]{DahmaniFuterWise2018}}]\label{thm:growth rate is perron-frobienius}
Let $\mc{G}$ be a pruned finite state automaton that accepts the regular language $L$. We have $\rho_\mc{G} \geq 1$ and \[\rho_\mc{G} = \lim\limits_{n\to \infty} \sqrt[n]{f_L(n)} = \lambda_L.\]
 
\end{theorem}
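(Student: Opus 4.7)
The plan is to translate the combinatorics of paths in $\mc{G}$ into the spectral behavior of its adjacency matrix and then apply classical Perron--Frobenius theory for non-negative matrices. Let $M$ denote the adjacency matrix of $\mc{G}$ with rows and columns indexed by the states. Since $(M^k)_{i,j}$ counts the number of directed paths of length $k$ from state $i$ to state $j$, and since a word of length $k$ lies in $L$ precisely when it is read by some such path from $s_0$ to an accept state, the number of words in $L$ of length exactly $k$ equals
\[
N_k := e_{s_0}^{T} M^k \chi_Y,
\]
where $e_{s_0}$ and $\chi_Y$ are the characteristic vectors of $s_0$ and $Y$. Consequently $f_L(n) = \sum_{k=0}^n N_k$, and the theorem reduces to extracting the exponential growth rate of this sum.

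For the upper bound I would invoke the Jordan form of $M$, which gives $\|M^k\| \leq C k^d \rho_{\mc{G}}^k$ for some constants $C, d$ depending only on $M$; then $N_k \leq \|e_{s_0}\| \cdot \|M^k\| \cdot \|\chi_Y\|$ yields $\limsup \sqrt[k]{N_k} \leq \rho_{\mc{G}}$, and hence $\limsup \sqrt[n]{f_L(n)} \leq \rho_{\mc{G}}$. For the matching lower bound I would decompose $\mc{G}$ into its strongly connected components and observe that $\rho_{\mc{G}}$ is realized as the Perron root of some component $\mc{C}$. Applied to the irreducible block of $\mc{C}$, the Perron--Frobenius theorem supplies strictly positive left and right eigenvectors supported on $\mc{C}$. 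The pruning hypothesis now intervenes crucially: it guarantees a directed path from $s_0$ into $\mc{C}$ and from $\mc{C}$ to some accept state, so positivity of the Perron eigenvectors forces $N_k \geq c \rho_{\mc{G}}^k$ along an arithmetic progression determined by the period of $\mc{C}$. Cumulating to $f_L(n) = \sum_{k \leq n} N_k$ smooths out any periodicity and preserves the exponential rate, yielding $\lim \sqrt[n]{f_L(n)} = \rho_{\mc{G}}$.

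For the inequality $\rho_{\mc{G}} \geq 1$: since $L$ is infinite (the degenerate finite case being handled by convention), the pruned automaton $\mc{G}$ must contain a directed cycle; a cycle of length $\ell$ produces $(M^{\ell})_{i,i} \geq 1$ for some state $i$, and the elementary inequality $\rho_{\mc{G}} \geq \sqrt[\ell]{(M^{\ell})_{i,i}}$ gives $\rho_{\mc{G}} \geq 1$. The main obstacle will be verifying the lower bound when $M$ is highly reducible: one must confirm that the dominant eigenvalue is actually detected by the specific bilinear form $e_{s_0}^T M^k \chi_Y$, and this is precisely where the full strength of pruning is needed---without it, $\rho_{\mc{G}}$ could be attained on a strongly connected component that is either unreachable from $s_0$ or from which no accept state can be reached, and the spectral radius would overshoot $\lambda_L$.
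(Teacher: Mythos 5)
This theorem is not proved in the paper at all---it is imported verbatim from Dahmani--Futer--Wise \cite[Theorem 3.6]{DahmaniFuterWise2018}---and your argument is essentially their classical Perron--Frobenius proof: identify $f_L$ with path counts $e_{s_0}^{T}M^k\chi_Y$, get the upper bound from $\|M^k\|\leq Ck^d\rho_{\mc{G}}^k$, and get the matching lower bound by locating a dominant strongly connected component and using pruning together with positivity of the Perron eigenvectors, so your proposal is correct and takes the same route as the cited source. The only points to note are that the identity ``number of words of length $k$ equals $e_{s_0}^{T}M^k\chi_Y$'' tacitly uses that each accepted word is read by a unique path and that parallel edges are counted with multiplicity (conventions in force in the cited source, though glossed over by the statement as quoted here), and the finite-language degeneracy you set aside by convention is likewise implicitly excluded there.
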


We also need the following lemma that combines Lemma 3.2 and Theorem 3.4 of \cite{DahmaniFuterWise2018}. When combined with Theorem \ref{thm:growth rate is perron-frobienius}, this allows us to conclude that the growth rate of certain sublanguages of regular languages are strictly smaller than the growth rate of the ambient language. 

\begin{lemma}[{\cite[Lemma 3.2 and Theorem 3.4]{DahmaniFuterWise2018}}]\label{lem:proper subgraphs have smaller eigenvalue}
Let $\Gamma$ be a directed graph and let $\Gamma'$ be a proper subgraph of $\Gamma$. If for every pair of distinct vertices $v,w$ of  $\Gamma$, there exits a directed path in $\Gamma$ from $v$ to $w$ and from $w$ to $v$, then $\rho_{\Gamma'} < \rho_{\Gamma}.$
\end{lemma}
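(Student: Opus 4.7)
The plan is to invoke the classical Perron-Frobenius theorem for irreducible non-negative matrices. Let $A$ and $A'$ denote the adjacency matrices of $\Gamma$ and $\Gamma'$ respectively; by padding $A'$ with zero rows and columns corresponding to any vertices of $\Gamma$ not present in $\Gamma'$ (which does not alter the nonzero spectrum), the hypothesis that $\Gamma'$ is a proper subgraph of $\Gamma$ translates into $0 \le A' \le A$ entrywise with $A' \ne A$. The strong connectivity hypothesis on $\Gamma$ is precisely the statement that $A$ is an irreducible non-negative matrix.

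From the irreducible Perron-Frobenius theorem I would extract three facts: $\rho(A) = \rho_\Gamma$ is a simple eigenvalue of $A$; $A$ admits a strictly positive right Perron eigenvector $v > 0$ with $Av = \rho(A) v$; and $A^T$ (which is itself irreducible, since reversing all edges preserves strong connectivity) admits a strictly positive left Perron eigenvector $w > 0$ with $w^T A = \rho(A) w^T$. Monotonicity of the spectral radius on non-negative matrices, immediate from $0 \le (A')^n \le A^n$ and Gelfand's formula $\rho(B) = \lim_n \|B^n\|^{1/n}$, already yields $\rho(A') \le \rho(A)$, so the entire content of the lemma is ruling out equality.

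I would argue the strict inequality by contradiction: suppose $\rho(A') = \rho(A) =: \lambda$. Since $A'$ is non-negative, the general (non-irreducible) form of Perron-Frobenius supplies a vector $u \ge 0$, $u \ne 0$, with $A' u = \lambda u$. Setting $C := A - A' \ge 0$ and pairing with the positive left eigenvector $w$, I would compute
\[
\lambda\, (w^T u) \;=\; w^T A u \;=\; w^T A' u + w^T C u \;=\; \lambda\, (w^T u) + w^T C u,
\]
so $w^T C u = 0$. Because $w > 0$ strictly and $Cu \ge 0$ entrywise, this forces $Cu = 0$, hence $Au = \lambda u$; that is, $u$ is a non-negative eigenvector of the irreducible matrix $A$ at its Perron eigenvalue. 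By the simplicity clause of Perron-Frobenius, $u$ must be a positive scalar multiple of $v$, so $u > 0$ strictly. But then $Cu = 0$ together with $C \ge 0$ and $u > 0$ forces every row of $C$ to vanish, so $C = 0$, contradicting $A' \ne A$.

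The main obstacle is really just correctly isolating the three consequences of irreducibility (positivity of $v$, positivity of $w$, and simplicity of $\rho(A)$); once those are in hand, the argument is a short manipulation using $w$ as a weight. The only remaining concern is bookkeeping around degenerate configurations such as $\Gamma'$ having no vertices, or $\Gamma$ consisting of a single vertex with no loop, all of which can be dispatched directly from the conventions defining $\rho_\Gamma$ and $\rho_{\Gamma'}$.
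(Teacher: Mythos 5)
Your argument is correct: monotonicity of the spectral radius gives $\rho_{\Gamma'} \le \rho_{\Gamma}$, and your pairing of a nonnegative eigenvector of $A'$ against the strictly positive left Perron eigenvector of the irreducible matrix $A$ correctly forces $A'=A$ in the equality case, contradicting properness (the degenerate single-vertex configurations you mention are irrelevant to how the lemma is used). The paper itself offers no proof---it imports the statement from Dahmani--Futer--Wise---and their argument is the same classical Perron--Frobenius analysis you have reconstructed, so your proposal is essentially a self-contained version of the cited proof rather than a new route.
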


Ultimately, our present interest in the growth of languages is to deduce facts about the growth rate of subgroups of a finitely generated group.

\begin{definition}(Growth rate of a subgroup)\label{defn:growth rate of a subgroup}
Let $G$ be a finitely generated group with finite generating set  $A$. For $g \in G$ and $r \geq 0$, let $B(g,r)$ be the set $\{h \in G \mid |h^{-1}g| \leq r\}$. For a subgroup  $H \leq G$, the \emph{growth function} of $H$ with respect to $A$ is the function $f_{H,A} \colon \mathbb{N} \to \mathbb{N}$ where \[f_{H,A}(n) = |B(e,n) \cap H|.\] The \emph{growth rate} of $H$ with respect to $A$ is denoted $\lambda_{H,A}$ and defined as \[\lambda_{H,A} = \limsup_{n\to \infty} \sqrt[n]{f_{H,A}(n)}.\]
\end{definition}

 The next corollary uses Theorem \ref{thm:growth rate is perron-frobienius} to relate the growth rate of a subgroup $H$ with  the growth rate of a regular language composed of geodesic words in the group that are in bijection with $H$.
 
 \begin{definition}
 Let $G$ be a finitely generated group with a finite generating set $A$, and let $L$ be a regular language with alphabet $A$. We say $L$ is a \emph{geodesic language} if for all $w \in L$, $w$ is a geodesic word in $\cay(G,A)$. We say $L$ \emph{bijects with a subgroup} $H\leq G$ if for each $w \in L$, $\overline{w} \in H$ and the map $L \to H$ given by $w \to \overline{w}$ is a bijection.
 \end{definition}
 
\begin{corollary} \label{Sequence of equalities}
Let $G$ be a finitely generated group with a finite generating set $A$. Suppose there is a regular, geodesic language $L$ with alphabet $A$ that bijects with a subgroup $H \leq G$.
If $\mc{G}$ is a pruned finite state automaton that accepts $L$, then \[ \lambda_{H,A} = \lambda_L = \rho_{\mc{G}}.\]
\end{corollary}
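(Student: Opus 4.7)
The plan is to prove the two equalities in $\lambda_{H,A} = \lambda_L = \rho_{\mc{G}}$ separately, with the second being an immediate application of an earlier theorem and the first reducing to a length-preserving counting argument. The right-hand equality $\lambda_L = \rho_{\mc{G}}$ is nothing more than Theorem \ref{thm:growth rate is perron-frobienius} applied to the pruned automaton $\mc{G}$, so no additional work is required there.

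For the left-hand equality $\lambda_{H,A} = \lambda_L$, the key observation is that the hypotheses force the bijection $L \to H$ given by $w \mapsto \overline{w}$ to preserve length. Indeed, since $L$ is a geodesic language, every $w \in L$ satisfies $\ell(w) = |\overline{w}|$, and since the map $w \mapsto \overline{w}$ is a bijection onto $H$, I would show that for each $n \in \mathbb{N}$,
\[
\{w \in L : \ell(w) \leq n\} \longleftrightarrow \{h \in H : |h| \leq n\} = B(e,n) \cap H,
\]
is a bijection. This yields $f_L(n) = f_{H,A}(n)$ for every $n$, so taking $\limsup_{n\to\infty} \sqrt[n]{\cdot}$ of both sides gives $\lambda_L = \lambda_{H,A}$ directly from the definitions of the respective growth rates.

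Chaining the two equalities then delivers $\lambda_{H,A} = \lambda_L = \rho_{\mc{G}}$. I do not anticipate any real obstacles: the proof is essentially a bookkeeping argument leveraging the fact that, by the geodesic and bijection hypotheses, counting words of a given length in $L$ is literally the same as counting elements of $H$ in the corresponding ball in $\cay(G,A)$.
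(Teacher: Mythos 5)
Your proposal is correct and follows essentially the same route as the paper: the geodesic and bijection hypotheses give $f_L(n)=f_{H,A}(n)$ for all $n$, hence $\lambda_{H,A}=\lambda_L$, and Theorem \ref{thm:growth rate is perron-frobienius} applied to the pruned automaton gives $\lambda_L=\rho_{\mc{G}}$.
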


\begin{proof}

 Since the language $L$ is a language of geodesics words in $\cay(G,A)$ that biject to $H$, we have $f_{H,A}=f_L$. If $\mc{G}$ is any pruned finite state automaton that accepts $L$, Theorem \ref{thm:growth rate is perron-frobienius} implies

\[  \lambda_{H,A} = \underset{n \rightarrow \infty}{\lim} \text{sup} \sqrt[n]{f_{H,A}(n)}=\underset{n \rightarrow \infty}{\lim} \text{sup} \sqrt[n]{f_{L}(n)}=\lambda_L=\underset{n \rightarrow \infty}{\lim}  \sqrt[n]{f_{L}(n)}=\rho_\mc{G} .\qedhere\]
\end{proof}

\subsection{Morse quasi-geodesics, Morse local-to-global groups, and stable subgroups}
We now define the class of groups we will focus on in this paper---Morse local-to-global groups. For the rest of this section, let $I$ denote a closed interval of $\mathbb{R}$ and $X$ a metric space.

\begin{definition}
Let $M \colon [1,\infty) \times [0,\infty) \to [0,\infty)$ and $B \geq 0$. The quasi-geodesic $\gamma \colon I \to X$ is an  \emph{$M$-Morse quasi-geodesic} if for any interval $[s,t] \subseteq I$, any $(k,c)$-quasi-geodesic with endpoints $\gamma(s)$ and $\gamma(t)$ is contained in the $M(k,c)$-neighborhood of $\gamma\vert_{[s,t]}$.
If $\gamma$ is an $M$-Morse $(k,c)$-quasi-geodesic, we say $\gamma$ is a $(M;k,c)$-Morse quasi-geodesic. A map $\gamma \colon I \to X$ is a \emph{$(B;M;k,c)$-local Morse quasi-geodesic} if for any $[s,t]\subseteq I$ we have \[|s-t| \leq B \implies \gamma\vert_{[s,t]} \text{ is a } (M;k,c)\text{-Morse quasi-geodesic}.\] If $\gamma$ is a $(B;M;1,0)$-local Morse quasi-geodesic, we say $\gamma$ is a \emph{$(B;M)$-local Morse geodesic}. The function $M$ is called a \emph{Morse gauge} and the number $B$ is called the \emph{local scale} of the local quasi-geodesic.
\end{definition}

In a finitely generated group with a finite generating set $A$, we label words in $A$ as Morse or locally Morse if the corresponding paths in the Cayley graph are Morse or locally Morse.

\begin{definition}
Let $G$ be a group with finite generating set $A$. Let $M$ be a Morse gauge and $B \geq 0$. We say a geodesic word $w \in A^\star$ is \emph{$M$-Morse} if the geodesic in $\cay(G,A)$ from $e$ to $\overline{w}$ labeled by $w$ is $M$-Morse. Similarly, we define a word $w \in A^\star$ to be $(B;M)$-locally Morse if the path in $\cay(G,A)$ from $e$ to $\overline{w}$ labeled by $w$ is a $(B;M)$-local Morse geodesic.
\end{definition}

While Morse quasi-geodesics share many similar properties with quasi-geodesics in hyperbolic spaces, one fundamental property that is not carried over from hyperbolic spaces is a local-to-global property that ensures local Morse quasi-geodesics of sufficient local scale are actually global Morse quasi-geodesics; see \cite{Morse-local-to-global} for examples. Morse local-to-global spaces are therefore spaces where this local-to-global property for Morse quasi-geodesics does hold.

\begin{definition}[{\cite[Definition 2.12]{Morse-local-to-global}}]\label{defn:Morsel_local_to_global}
A metric space $X$ is a \emph{Morse local-to-global space} if for every Morse gauge $M$ and constants $k\geq 1$, $c \geq 0$, there exists a local scale $B \geq 0$, Morse gauge $M'$, and constants $k' \geq 1$, $c'\geq 0$ so that every $(B;M;k,c)$-local Morse quasi-geodesic is a global $(M';k',c')$-Morse quasi-geodesic. A finitely generated group $G$ is a \emph{Morse local-to-global group}, if there exists a finite generating set $A$ for $G$ so that $\cay(G,A)$ is a Morse local-to-global space. This definition is independent of choice of finite generating set since being a Morse local-to-global space is a quasi-isometry invariant.
\end{definition}

Russell, Spriano, and Tran proved that a large number of interesting, non-hyperbolic groups have the Morse local-to-global property.

\begin{theorem}[{\cite[Theorems D and E]{Morse-local-to-global}}]\label{thm:Morse local-to-global groups}
The following groups have the Morse local-to-global property.
\begin{itemize}
    \item All $\operatorname{CAT}(0)$ groups.
    \item All hierarchically hyperbolic groups.
    \item The mapping class group of an orientable,  finite type surface.
    \item The fundamental group of any compact $3$-manifold.
    \item All unconstricted groups, such as, solvable groups and any group with infinite order central element.
    \item Any group hyperbolic relative to groups with the Morse local-to-global property.
\end{itemize}
\end{theorem}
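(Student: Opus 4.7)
The statement at hand is cited from Russell, Spriano, and Tran, so my task is really to reverse-engineer a plausible proof strategy, which I would approach by treating the listed classes separately and reducing where possible. The plan is to establish the two foundational cases ($\CAT(0)$ and relatively hyperbolic) directly, then cover the remaining items via reductions. The unconstricted case should actually be the easiest: in an unconstricted group every asymptotic cone has no cut points, which forces every Morse quasi-geodesic to have uniformly bounded image. Thus the hypothesis of the Morse local-to-global property is satisfied vacuously once the local scale exceeds this bound.

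For $\CAT(0)$ groups, I would exploit the fact (due to Charney--Sultan and Sultan) that in $\CAT(0)$ spaces Morse quasi-geodesics coincide (up to parameters) with contracting quasi-geodesics, and that contraction is detectable at a local scale via a nearest-point projection criterion. Concretely, I would show that if a path $\gamma$ is a $(B;M;k,c)$-local Morse quasi-geodesic for $B$ large enough relative to $M,k,c$, then along any subsegment the nearest-point projection from the ambient space onto that subsegment has uniformly bounded image on any metric ball disjoint from it; gluing these local contraction estimates via a standard Morse-lemma-style argument in $\CAT(0)$ spaces yields global contraction, hence global Morse-ness. For relatively hyperbolic groups, I would use Drutu--Sapir's characterization that a quasi-geodesic is Morse if and only if it travels a uniformly bounded amount through each peripheral coset; then a local Morse quasi-geodesic can be projected to the coned-off Cayley graph, where hyperbolicity converts local quasi-geodesicity into global quasi-geodesicity, which then lifts back to a global Morse quasi-geodesic with parameters controlled by the peripheral Morse local-to-global constants.

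For hierarchically hyperbolic groups, the approach I would take is to use the characterization (from Abbott--Behrstock--Durham) that Morse quasi-geodesics in an HHG are precisely those with uniformly bounded projections to all non-hyperbolic coordinate spaces in the hierarchy. The key step is to show that if a path is locally Morse at a sufficiently large scale, its projections to each non-hyperbolic coordinate space are locally bounded, and by the bounded geodesic image axiom and passing-up principle, local boundedness upgrades to global boundedness of these projections. Once projections are globally bounded, hyperbolic projections combined with the distance formula yield that the path is a global Morse quasi-geodesic. Since the mapping class group is an HHG (Behrstock--Hagen--Sisto), it follows from this case. Finally, for closed $3$-manifold groups I would invoke the geometric decomposition: each piece is either Seifert fibered (hence either hyperbolic or unconstricted-like via its central $\mathbb{Z}$), Sol, or hyperbolic, and the fundamental group is relatively hyperbolic with respect to graph manifold / Seifert pieces, reducing to the relatively hyperbolic case already proved.

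The main obstacle throughout is the HHG case, because one needs a genuinely hierarchical argument to promote local bounded projections to global ones; the bounded geodesic image axiom gives this in hyperbolic coordinate spaces, but the non-hyperbolic coordinates require a passing-up argument that is delicate and requires carefully choosing the local scale $B$ to dominate the hierarchy constants. The $\CAT(0)$ case has its own subtlety in that one must verify that the quasi-geodesic constants $(k',c')$ and output gauge $M'$ depend only on $M,k,c$ and not on the particular geodesic; handling this uniformity across all locally Morse paths is where I would expect the most bookkeeping.
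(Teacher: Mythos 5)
You should first note that the paper does not prove this statement at all: it is imported verbatim from Russell--Spriano--Tran (Theorems D and E of \cite{Morse-local-to-global}), so your sketch can only be measured against that cited work. At the level of architecture your reconstruction does match it: the unconstricted case is handled by the uniform bound on the length of $M$-Morse segments in wide spaces, so that once the local scale $B$ exceeds this bound there are essentially no long local Morse quasi-geodesics to worry about; $\CAT(0)$ spaces are treated through the equivalence of Morse and contracting quasi-geodesics; hierarchically hyperbolic groups are treated with the hierarchy machinery and cover the mapping class group; relative hyperbolicity is a combination statement over peripherals that already have the property; and $3$-manifold groups reduce through the geometric decomposition to the previous cases.

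However, two of your key steps are wrong as stated. In the relatively hyperbolic case you invoke ``Morse if and only if uniformly bounded penetration of peripheral cosets.'' That characterization is valid only when the peripherals have no Morse directions (the Drutu--Sapir unconstricted setting); here the peripherals are merely assumed to be Morse local-to-global, and a Morse quasi-geodesic of $G$ may travel arbitrarily far inside a peripheral coset provided its intersection with that coset is Morse in the peripheral. Indeed, if your characterization held, the hypothesis on the peripherals would be superfluous, which it is not: the correct argument decomposes a local Morse quasi-geodesic into peripheral excursions and the pieces between them, applies the peripheral Morse local-to-global property to the excursions, and only then uses (relative) hyperbolicity to glue, with the output gauge depending on the peripheral constants. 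In the HHG case, the phrase ``bounded projections to all non-hyperbolic coordinate spaces'' is vacuous, since every coordinate space of an HHS is uniformly hyperbolic by definition; the Abbott--Behrstock--Durham criterion you want is uniformly bounded projections to all domains properly nested in the maximal one (equivalently, under the bounded domain dichotomy, that the path maps quasi-isometrically to the top-level hyperbolic space), and the real work---which your sketch only gestures at---is promoting local boundedness of these projections, together with local quasi-geodesicity of the top-level shadow, to global statements before the distance formula can be applied. The unconstricted, $\CAT(0)$, and $3$-manifold reductions are fine in outline, with the caveat that the $3$-manifold case needs the graph-manifold and mixed pieces to be HHGs or relatively hyperbolic over groups already covered.
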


The Morse local-to-global property is particularly useful in studying stable subgroups of Morse local-to-global groups. Stable subgroups are the  generalization of quasi-convex subgroups of hyperbolic groups that accompanies Morse geodesics.

\begin{definition}\label{def:stable_subgroup} (Stable subgroup). Let $G$ be a group with finite generating set $A$, $M$ be a Morse
gauge, and $k \geq 0$. A subgroup $H \leq G$ is \emph{$(M, k)$-stable} in Cay$(G,A)$, if for every 
$h\in H$, every geodesic in $\cay(G,A)$ from $e$ to $h$ is $M$-Morse and contained in the $k$-neighborhood of $H$. A
subgroup $H \leq G$ is a \emph{stable subgroup} if for any choice of finite generating set $A$ for $G$, there exist 
$M$ and $k$ such that $H$ is $(M, k)$-stable in $\cay(G,A)$.
\end{definition}

Stable cyclic subgroups  are a particularly important class of stable subgroups because they produce Morse quasi-geodesics in the Cayley graph of the group. The generators of such subgroups are often called Morse elements.

\begin{definition}[Morse element]
Let $G$ be group with a finite generating set $A$. We say $g \in G$ is \emph{Morse} if $\langle g \rangle$ is a stable subgroup of $G$.
\end{definition}

Note, an element being Morse is a much stronger statement than an element being represented by a Morse geodesic word. If $g$ is represented by some $M$-Morse geodesic word, there is no guarantee that a power of $g$ is also represented by an $M$-Morse geodesic word. However, if $g$ is a Morse element of $G$, then every power of $g$ is represented by a Morse geodesic word with the same Morse gauge.

\section {Regular languages for Morse geodesic words}\label{sec:reg_language_for_Morse_geodesics}

The goal of this section is to construct automatic structures for the Morse geodesic words in a Morse local-to-global group. Given a Morse local-to-global group, we will construct a regular language $L_M$ for each Morse gauge $M$ so that every $M$-Morse geodesic word is an element of $L_M$ and every element of $L_M$ is an $M'$-Morse geodesic word. We define $L_M$ to be the set of geodesic words that are \emph{locally} $M$-Morse. The Morse local-to-global property ensures these geodesics are $M'$-Morse for $M'$ determined by $M$.

\begin{definition}[The language $L_M$]\label{defn:L_M}
Let $G$ be a Morse local-to-global group with finite generating set $A$. For a Morse gauge $M$,  let $B_M \geq 0$ be the local scale so that each $(B_M;M)$-local Morse geodesic is a $(M';k,c)$-Morse quasi-geodesic where $M'$, $k$, and $c$ depend only on $M$. Define $L_M$ to be  the language of all geodesic words in $A^\star$ that are $(B_M;M)$-locally Morse in $\cay(G,A)$.
\end{definition}

By of the end of the section, we will prove the following theorem that implies Theorem \ref{intro_thm:regular_language_for_Morse_Geodesics} from the introduction as every $M$-Morse geodesic word is  an element of $L_M$.

\begin{theorem}[Regular language for Morse geodesics]\label{thm:regular_language_for_Morse_geodesics}
Let $G$ be a Morse local-to-global group with a finite generating set $A$ and let $L_M$ be the language from Definition \ref{defn:L_M}.
\begin{enumerate}
    \item There exists a Morse gauge $M'$ determined by $M$ so that every element of $L_M$ is an $M'$-Morse geodesic word. \label{item:reg_language_1}
    \item For each Morse gauge $M$, the language $L_M$ is regular. \label{item:reg_language_2}
    \item  \label{item:reg_language_3} Let $\preceq$ be a total order order on $A$ and extend $\preceq$ to a lexicographic ordering on $A^\star$. The  language $J_M \subseteq L_M$ defined by
    \begin{center}
    $J_M= \{u \in L_M \mid  \text{whenever } v \in  L_M \text{ with } \overline{u}=\overline{v} \text{ we have } u \preceq v      \}$
\end{center} 
is regular and has the property: if $u,v \in J_M$ with $\overline{u}= \overline{v}$ in $G$, then $u=v$. 
\end{enumerate}
\end{theorem}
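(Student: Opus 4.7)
The overall plan is to adapt Cannon's argument that geodesic words in a hyperbolic group form a regular language, replacing his finite cone types with \emph{$M$-Morse cone types} and using the Morse local-to-global property to play the role that hyperbolicity plays in Cannon's proof. For part (\ref{item:reg_language_1}), I would appeal directly to the definition of $B_M$: every $(B_M; M)$-local Morse geodesic is, by the Morse local-to-global property, a $(M^*; k, c)$-Morse quasi-geodesic globally, with $M^*, k, c$ depending only on $M$. A word in $L_M$ is a genuine geodesic, hence a $(1,0)$-quasi-geodesic that is also $M^*$-Morse; absorbing the trivial quasi-isometry constants into the Morse gauge yields the claimed $M'$.

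For part (\ref{item:reg_language_2}), I would define the \emph{$M$-Morse cone type} at $g \in G$ by
\[
C_M(g) = \{w \in A^\star : \text{some geodesic from } e \text{ to } g \text{ concatenated with } w \text{ lies in } L_M\}.
\]
The key claim is that $C_M(g)$ is determined by the isomorphism type of the labeled ball of some radius $R = R(M)$ centered at $g$ in $\cay(G, A)$; since there are only finitely many such labeled balls, this yields finitely many Morse cone types. The localization step would use a fellow-traveler property for locally $M$-Morse geodesics: two locally $M$-Morse geodesic prefixes to $g$ must fellow-travel within a distance controlled by $M'$, so whether appending $w$ preserves membership in $L_M$ depends only on the interaction of $w$ with a $B_M$-scale neighborhood of $g$. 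The desired finite state automaton has cone types as states, $C_M(e)$ as initial state, every cone type as an accepting state, and an edge from $C_M(g)$ to $C_M(ga)$ labeled $a$ whenever $a \in C_M(g)$; by construction its accepted language is exactly $L_M$.

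For part (\ref{item:reg_language_3}), I would build a synchronous two-tape automaton accepting pairs $(u, v)$ of words from $L_M$ with $\overline{u} = \overline{v}$ and $v \prec u$ lexicographically. Since $L_M$ consists of geodesics, such $u$ and $v$ have equal length, and since both are $M'$-Morse the standard Morse fellow-traveler bound keeps the Cayley-graph distance $d(u_1 \cdots u_i,\, v_1 \cdots v_i)$ uniformly bounded in $i$. Thus the ``difference element'' at each step takes only finitely many values, so the two-tape automaton is finite. Projecting onto the first coordinate and complementing inside $L_M$ produces $J_M$, which is therefore regular; uniqueness of representatives is immediate from the definition of $J_M$.

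The main obstacle is the finite-cone-types claim in part (\ref{item:reg_language_2}). Since the $(B_M; M)$-local Morse condition must hold on every subsegment of length at most $B_M$, a priori whether an extension by $w$ keeps the word in $L_M$ could depend on data far from $g$. Overcoming this requires using the Morse local-to-global property to upgrade ``local Morse'' data to genuine hyperbolic-style fellow-traveling between the specific locally Morse geodesics reaching $g$---this is precisely the step where Cannon's original argument breaks without the Morse local-to-global hypothesis, and it is what forces us to define the cones using the local rather than the global Morse condition.
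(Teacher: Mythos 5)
Your parts (1) and (3) are essentially sound and close to the paper's route (the paper also bounds the ``difference element'' between two $M'$-Morse geodesic representatives by $12M'(3,0)$ to build an equality-recognizing two-tape automaton, and then extracts the lex-least sublanguage via a quoted proposition of Baumslag--Gersten--Shapiro--Short rather than your projection-and-complement step). The genuine gap is in part (2). First, you attach the cone to a group element via ``some geodesic from $e$ to $g$ concatenated with $w$ lies in $L_M$.'' Membership of $a_1\cdots a_{i+1}$ in $L_M$ depends on the actual word read so far---its terminal segment of length about $B_M$ enters the local Morse condition on windows straddling the junction---not only on the element $\overline{a_1\cdots a_i}$. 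With the existential definition, $w\in C_M(ga)$ need not give $aw\in C_M(g)$ (the witnessing geodesic to $ga$ need not end in the letter $a$), so the cone type of $ga$ is not a function of the cone type of $g$ and $a$ and your transition relation is not well defined; moreover the resulting automaton accepts every word of $L_M$ but may also accept words whose prefixes are only witnessed by \emph{other} geodesics, so at best you get a regular language containing $L_M$, which is not what the theorem asserts. This is why the paper defines $\Cone^M(u)$ for a geodesic \emph{word} $u$, where $w\in\Cone^M(ua)$ if and only if $aw\in\Cone^M(u)$, making the transitions well defined and the accepted language exactly $L_M$.

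Second, and more seriously, your finiteness criterion---that $C_M(g)$ is determined by the isomorphism type of the labeled $R$-ball centered at $g$---is vacuous: left translation by $g$ is a label-preserving isomorphism from $B(e,R)$ to $B(g,R)$, so all labeled balls in $\cay(G,A)$ are isomorphic and this datum cannot distinguish any two cone types. What Cannon's argument actually uses is the position of that ball relative to the identity, recorded in the paper as the $N$-tail $T_N(u)=\{g\in G : |g|\le N,\ |\overline{u}g|<|\overline{u}|\}$, and in the Morse setting one additionally needs the $N$-restricted cone $\Cone^M_N(u)$. The heart of the paper's proof (Theorem \ref{thm: finite many cone types}) is an induction on $N$ showing that if $T_m(u)=T_m(v)$ and $\Cone^M_m(u)=\Cone^M_m(v)$ for $m\approx\max\{1+8M''(3,0),\,B_M+1\}$, then $\Cone^M(u)=\Cone^M(v)$: geodesity of the extension $uwa$ is proved by comparing a hypothetical shorter representative with the globally $M'$-Morse geodesic $uw$ via the fellow-traveling lemma to produce a tail element that transfers from $u$ to $v$, and the local Morse condition is preserved because every subsegment of length at most $B_M$ of $uwa$ lies inside $uw$ or inside $wa$. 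Your sketch gestures at fellow traveling between different prefixes reaching $g$, but contains neither the tail data nor this induction, so the key finiteness claim---and hence the regularity of $L_M$ on which part (3) also rests---is not established.
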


\begin{proof}
By choice of the local scale $B_M$, item  (\ref{item:reg_language_1}) follows from applying the Morse local-to-global property of $\cay(G,A)$ to the geodesics representing elements of $L_M$. Items (\ref{item:reg_language_2}) and (\ref{item:reg_language_3})  are proved in Corollaries \ref{cor: Morse geodesic form regular langauge} and \ref{cor: lexographically least Morse geodesics} respectively.
\end{proof}
 
Subsection \ref{sec:Morse_cones} introduces  the objects  and preliminary lemmas that will be used in the proof of the regularity of $L_M$; the proof of regularity  appears in Subsection \ref{sec:finite_Morse_cone_type}. Subsection \ref{sec:unique_representatives} uses the regularity of $L_M$ to prove the regularity of the short-lex sublanguage $J_M$.

\subsection{Morse cone types}\label{sec:Morse_cones}
In the setting of hyperbolic groups, the key to Cannon's proof that the set of geodesic words form a regular language is to use the local-to-global property of quasi-geodesics to prove that a hyperbolic group has a finite number of \emph{cone types} \cite{Cannon1984}.  
Our proof of  that $L_M$ is regular will closely follow Cannon's proof in the hyperbolic case, however some care must be taken when defining our analogue of Cannon's cone types. Cannon defined the cone type of a geodesic word $u$ to be all the words $w$ so that $uw$ is  a geodesic word. The most straight forward generalization of this definition to Morse geodesics would be to define an $M$-Morse cone of a geodesic word $u$ to be all of the geodesic words $w$ so the $uw$ is an $M$-Morse geodesic word. However, this definition does not permit the appropriate level of local control necessary to prove there are a finite number of cone types. We therefore use the following definition of an $M$-Morse cone that requires the combined word $uw$ to be a geodesic that is locally $M$-Morse for a sufficient scale.

\begin{definition}[Morse cones and cone types]\label{defn:Morse_cones}
Let $M$ be a Morse gauge and $G$ be a Morse local-to-global group with finite generating set $A$. For a geodesic word $u \in A^\star$, define the \emph{$M$-Morse cone} of $u$ to be all the words $w \in A^\star$ so that $uw$ is a geodesic word in $\cay(G,A)$ that is $(B_M;M)$-locally Morse in $\cay(G,A)$. We denote the $M$-Morse cone of $u$ by $\Cone^M(u)$, and we say that $u$ and $v$ have the same \emph{$M$-Morse cone type} if the sets $\Cone^M(u)$  and $\Cone^M(v)$ are equal.
\end{definition}

In the hyperbolic case, Cannon proves that there is a number $N$ so that if the cones types for two elements agree for all elements with word length at most $N$, then the cone types are equal. This is a ``local-to-global'' property for cone types. In our case, we will also show that the $M$-Morse cone is determined ``locally'', however we will need to keep track of two pieces of local information, both the restriction of the $M$-Morse cones to words of length at most $N$ and the  tails of  elements of length at most $N$.

\begin{definition}[Local restriction of the cone]\label{def: Cay contracting type}
For a geodesic word $u \in A^\star$, $N \in \mathbb{N}$, and Morse gauge $M$, define the \emph{$N$-restricted $M$-Morse cone} of $u$ to be all words $w \in \Cone^M(u)$ with $|\overline{w}|\leq N$. We denote the $N$-restricted $M$-Morse cone of $u$ by $\Cone_N^M(u)$.
\end{definition}

\begin{definition}[$N$-tails of an element]\label{def: Cay k-tail}
Given a geodesic word $u \in A^\star$ and $N\in \mathbb{N}$, define the \emph{$N$-tail} of $u$ to be all  elements $g \in G$ with $|g| \leq N$ and $|\overline{u}g| < |\overline{u}|$. We denote the $N$-tail of $u$ by $T_N(u)$.
\end{definition}

We now record two lemmas we will need in our proof of  the regularity of $L_M$. The first says any geodesic that starts and ends close to a Morse geodesic must be a Morse geodesic while the second implies such pairs of geodesics must fellow travel for a significant distance.

\begin{lemma}[{\cite[Lemma 2.1]{Cordes2017}}]\label{lem: Morse is contagious}
Let $X$ be a  geodesic metric space and let $\alpha, \beta \colon [0,s] \rightarrow X$ be finite geodesics with $\alpha(0)=\beta(0)$ and $d(\alpha(s), \beta(s)) \leq 1$. If $\alpha$ is $M$-Morse, then there exists $M' \geq M$, depending only on $M$, such that $\beta$ is $M'$-Morse.
\end{lemma}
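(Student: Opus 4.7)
The plan is to show that $\beta$ inherits the Morse property from $\alpha$ by routing every quasi-geodesic with endpoints on $\beta$ through a quasi-geodesic with endpoints on $\alpha$, where the Morse hypothesis is available. The strategy has three movements: (i) extend $\beta$ slightly so that its endpoints coincide with those of $\alpha$; (ii) given any $(k,c)$-quasi-geodesic with endpoints on $\beta$, splice it into a quasi-geodesic with endpoints on $\alpha$; (iii) apply the Morse property of $\alpha$ to both objects and transport the resulting fellow-traveling back to $\beta$.

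For step (i), I would let $\sigma$ be a geodesic from $\beta(s)$ to $\alpha(s)$ (of length at most $1$) and set $\beta^+ = \beta * \sigma$, parametrized by arclength. A direct triangle-inequality computation (splitting into the cases where both parameters lie in $\beta$, both in $\sigma$, or one in each) shows $\beta^+$ is a $(1,2)$-quasi-geodesic from $\alpha(0)$ to $\alpha(s)$. The $M$-Morse hypothesis on $\alpha$ then gives $\beta^+\subseteq N_{M(1,2)}(\alpha)$, and in particular every point of $\beta$ lies within $M(1,2)$ of $\alpha$. For step (ii), given a $(k,c)$-quasi-geodesic $\gamma \colon [0,L]\to X$ with $\gamma(0)=\beta(s_1)$ and $\gamma(L)=\beta(s_2)$, I would form the concatenation $\eta = \beta|_{[0,s_1]} * \gamma * \beta|_{[s_2,s]} * \sigma$ from $\alpha(0)$ to $\alpha(s)$ and verify that it is a $(k,c')$-quasi-geodesic with $c'$ depending only on $k$ and $c$. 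The upper Lipschitz bound is immediate after reparametrizing, and the lower bound is obtained by a case analysis exactly as in step (i), using that the endpoints of $\gamma$ sit on the geodesic $\beta$ so that distances in the geodesic portions are computed exactly.

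For step (iii), the Morse property of $\alpha$ applied to $\eta$ yields $\eta\subseteq N_{M(k,c')}(\alpha)$, hence $\gamma\subseteq N_{M(k,c')}(\alpha)$. To replace $\alpha$ by $\beta|_{[s_1,s_2]}$, I would fix a point $p\in\gamma$ and a nearest point $\alpha(r)\in\alpha$; using step (i) in reverse (the $(1,2)$-quasi-geodesic $\beta^+$ is also close to $\alpha$ and has matching endpoints) together with a continuity/arclength argument, I can locate a parameter $u\in[s_1,s_2]$ with $\beta(u)$ within a bounded distance of $\alpha(r)$, and hence of $p$. All constants are functions of $M$, $k$, and $c$ alone, giving a Morse gauge $M'$ that depends only on $M$.

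The main obstacle will be step (iii): ensuring that the nearby point on $\beta$ found via $\alpha$ actually lies on the subsegment $\beta|_{[s_1,s_2]}$ rather than on a tail. This requires tracking arclength parameters through the quasi-geodesic $\eta$ and exploiting the fact that the geodesic tails of $\eta$ coincide with the corresponding tails of $\beta^+$. A secondary but important technicality is the careful verification that the spliced path $\eta$ is genuinely a quasi-geodesic with constants that do not blow up; this is the standard but slightly fiddly mixed-parameter estimate that underlies essentially every Morse stability argument.
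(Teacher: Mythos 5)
Your overall strategy---routing quasi-geodesics with endpoints on $\beta$ through quasi-geodesics with endpoints on $\alpha$ and transporting the Morse bound back---is the right one (note the paper does not prove this lemma itself; it quotes it from \cite{Cordes2017}, whose argument runs along these lines), and your step (i) is correct. The genuine gap is step (ii): the spliced path $\eta = \beta|_{[0,s_1]} \ast \gamma \ast \beta|_{[s_2,s]} \ast \sigma$ is in general \emph{not} a $(k,c')$-quasi-geodesic with $c'$ depending only on $k$ and $c$, and no ``mixed-parameter case analysis as in step (i)'' can show that it is, because the lower bound genuinely fails for pairs consisting of a point on a $\beta$-tail and a point of $\gamma$. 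Nothing in the quasi-geodesic axioms prevents a $(k,c)$-quasi-geodesic with endpoints $\beta(s_1),\beta(s_2)$ from backtracking: it may pass through a point $\beta(s_1-D)$ with $D$ a definite fraction of $d(\beta(s_1),\beta(s_2))$ once $k\ge 2$. For instance, in the Euclidean plane with $\beta$ along the $x$-axis, run from $\beta(s_1)$ backwards along the axis a distance $D$ at speed $k$, then return to $\beta(s_2)$ along two straight segments that leave the axis at a fixed angle depending only on $k$; for a suitable constant $c$ depending only on $k$ this is a $(k,c)$-quasi-geodesic, with $D$ proportional to $|s_2-s_1|$. For such a $\gamma$, the concatenation $\eta$ contains two points at distance $0$ (both equal to $\beta(s_1-D)$, one on the $\beta$-tail and one on $\gamma$) whose parameters differ by at least $D$, so no additive constant independent of the length of $\gamma$ can work. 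In the setting of the lemma one can eventually bound such backtracking, but only by already knowing that $\gamma$ stays near the relevant subsegment of $\alpha$ or $\beta$---that is, by the very Morse control you are trying to establish---so the splice cannot be verified directly as proposed.

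The standard repair, which is what the cited proof does, keeps your steps (i) and (iii) but replaces the splice: use step (i) to choose points $x',y'$ on $\alpha$ within $M(1,2)$ of the endpoints $\beta(s_1),\beta(s_2)$ of $\gamma$, and append to $\gamma$ only these two \emph{short} connecting geodesics. Appending paths of length at most $M(1,2)$ at the two ends yields a $(k,c')$-quasi-geodesic with endpoints on $\alpha$, where $c'$ depends only on $k$, $c$ and $M(1,2)$; the Morse property of $\alpha$ then places $\gamma$ in a uniform neighborhood of the subsegment of $\alpha$ between $x'$ and $y'$. Your step (iii) is then the correct closing move and can be carried out: choose for each $t\in[0,s]$ a parameter $\pi(t)$ with $d(\beta(t),\alpha(\pi(t)))\le M(1,2)$; since $\alpha$ is a geodesic, $\pi$ moves by a bounded amount as $t$ increases by $1$, and $\pi(s_1),\pi(s_2)$ lie within $2M(1,2)$ of the parameters of $x',y'$, so the image of $[s_1,s_2]$ is coarsely dense in the parameter interval between $x'$ and $y'$. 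Hence that subsegment of $\alpha$, and therefore all of $\gamma$, lies within a distance of $\beta|_{[s_1,s_2]}$ bounded purely in terms of $M$, $k$, $c$, which gives the required gauge $M'$ depending only on $M$.
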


\begin{lemma}[{\cite[Lemma 2.7]{Cordes2017}}]\label{lem: fellow traveling}
Let $X$ be a geodesic metric space. If $\alpha, \beta \colon [0,s] \rightarrow X$ are $M$-Morse geodesics with $\alpha(0) = \beta(0)$ and $d(\alpha(r), \beta)<K$ for some $ r \in [0,s]$ and some $K>0$, then $d(\alpha(t), \beta(t))\leq8M(3,0)$ for all $t<r-K-4M(3,0)$. In particular, if $\alpha(s) = \beta(s)$, then $d(\alpha(t),\beta(t)) \leq 12M(3,0)$ for all $t \in [0,s]$.
\end{lemma}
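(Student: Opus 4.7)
\smallskip

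\noindent\textbf{Proof plan.}
My strategy will be to exploit the closeness $d(\alpha(r), \beta(r')) < K$ for some $r' \in [0, s]$ by manufacturing a competing path from $\alpha(0)$ to $\alpha(r)$ that travels along $\beta$. Applying the Morse property of $\alpha$ to this detour will force $\beta$, on a long initial segment, into a bounded neighborhood of $\alpha$; a standard parameter-matching step then upgrades ``close to $\alpha$'' to genuine fellow traveling.

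First I would pick $r' \in [0, s]$ realizing $d(\alpha(r), \beta(r')) < K$ and let $\eta$ be a geodesic of length $\ell < K$ from $\beta(r')$ to $\alpha(r)$. The triangle inequality at the three points $\alpha(0) = \beta(0)$, $\alpha(r)$, $\beta(r')$, together with the fact that both $\alpha$ and $\beta$ are arclength-parametrized geodesics, yields $|r - r'| < K$. I would then form the detour $\sigma := \beta|_{[0, r']} \cdot \eta$ from $\alpha(0)$ to $\alpha(r)$, of total length at most $r + 2K$. The key calculation is that $\sigma$ is quasi-geodesic with tame constants: for a pair $p = \beta(t_1)$ on the $\beta$-part and $q$ at parameter $d_2$ along $\eta$, routing through $\alpha(r)$ gives $d(p, q) \geq d(p, \alpha(r)) - d(q, \alpha(r)) \geq (r' - t_1 - K) - (\ell - d_2) \geq \mathrm{length}_\sigma(p, q) - 2K$, so $\sigma$ is a $(1, 2K)$-quasi-geodesic. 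Subdividing or reparametrizing $\sigma$ on the scale of $K$ will promote this to a $(3, 0)$-quasi-geodesic at the cost of a controlled additive error; applying the $M$-Morse property of $\alpha$ will then place $\sigma$ inside the $M(3, 0)$-neighborhood of $\alpha|_{[0, r]}$.

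To conclude fellow traveling, for each $t \leq r'$ I would pick $\alpha(t^\ast)$ realizing $d(\beta(t), \alpha) \leq M(3, 0)$. Since both $\alpha$ and $\beta$ are arclength-parametrized geodesics starting at $\alpha(0) = \beta(0)$, the identities $t = d(\alpha(0), \beta(t))$ and $t^\ast = d(\alpha(0), \alpha(t^\ast))$ together with the triangle inequality give $|t - t^\ast| \leq M(3, 0)$, and hence $d(\alpha(t), \beta(t)) \leq 2M(3, 0)$. Propagating the errors from $\ell < K$, from $|r - r'| < K$, and from the conversion of $\sigma$ into a $(3, 0)$-quasi-geodesic should yield the stated bound $8M(3, 0)$ on the range $t < r - K - 4M(3, 0)$. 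For the ``in particular'' clause I would specialize to $r = s$ with $K$ arbitrarily small, and then invoke the symmetric argument run from the other endpoint to cover the window near $s$, absorbing the residual constants into the cleaner bound $12M(3, 0)$ on all of $[0, s]$.

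The main obstacle will be arithmetic rather than conceptual: the natural detour is only a $(1, 2K)$-quasi-geodesic, so care will be required both to repromote it to one on which the Morse condition can be applied with the specific parameters $(3, 0)$, and to aggregate every additive $K$- and $M(3,0)$-sized error so that the final neighborhood constants read as the clean values $8M(3, 0)$ and $12M(3, 0)$ rather than some unsightly expression in $M$, $K$, and $r$.
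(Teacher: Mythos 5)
First, a remark on the comparison itself: the paper does not prove this statement — it imports it verbatim from \cite[Lemma 2.7]{Cordes2017} and uses it as a black box — so the only meaningful comparison is with the argument in that reference. Measured against it, your overall strategy is the right one: concatenate an initial segment of $\beta$ with a short geodesic over to $\alpha(r)$, recognize the concatenation as a $(3,0)$-quasi-geodesic with endpoints on $\alpha$, apply the Morse property of $\alpha$, and then match parameters using that both curves are unit-speed geodesics issuing from the common basepoint.

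However, the step where you pass from the $(1,2K)$-quasi-geodesic $\sigma$ to a $(3,0)$-quasi-geodesic is a genuine gap, and it sits exactly where the content of the lemma lives. No amount of subdividing or reparametrizing removes an additive error: if $\eta$ initially backtracks along $\beta$, two points of $\sigma$ straddling the junction can coincide while lying at path-distance up to $2K$ apart along $\sigma$, which violates the $(3,0)$ condition outright. Nor can you instead invoke the Morse property at parameters $(3,2K)$, because the resulting neighborhood $M(3,2K)$ would depend on $K$, whereas the conclusion $8M(3,0)$ must not (the paper applies the lemma with $K$ of unbounded size, e.g.\ in Remark \ref{rem:Morse gauge for fellow traveling} and Lemma \ref{lem: equality recognizer}). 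The fix — and the actual trick in the cited proof — is to take $\beta(r')$ to be a \emph{closest} point of $\beta$ to $\alpha(r)$ rather than an arbitrary point within distance $K$. Then for $p=\beta(t_1)$ and $q=\eta(d_2)$ one has both $d(p,q)\ge (r'-t_1)-d_2$ and, because no point of $\beta$ is closer to $\alpha(r)$ than $\beta(r')$ is, $d(p,q)\ge d(p,\alpha(r))-d(q,\alpha(r))\ge \ell-(\ell-d_2)=d_2$; the larger of these two lower bounds is at least $\tfrac13\bigl((r'-t_1)+d_2\bigr)$, so $\sigma$ is an honest $(3,0)$-quasi-geodesic with no additive term. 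With that substitution your parameter-matching paragraph goes through and yields the stated bounds with room to spare; the ``in particular'' clause still needs a word for $t$ in the terminal window $[s-4M(3,0),s]$, where the triangle inequality through the common endpoint $\alpha(s)=\beta(s)$ gives $d(\alpha(t),\beta(t))\le 2(s-t)$ directly.
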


We finish by noting a consequence of the above lemmas that we shall use in the proof of the regularity of $L_M$.
\begin{remark}\label{rem:Morse gauge for fellow traveling}
Let $G$ be a Morse local-to-global group with finite generating set $A$. If  $\alpha$ is a geodesic that is $(B_M;M)$-locally Morse in $\cay(G,A)$, then, by the Morse local-to-global property, $\alpha$ is $M'$-Morse where $M'$ depends only on $M$ and $A$. By Lemma \ref{lem: Morse is contagious}, if $\beta$ is any other geodesic in the Cayley graph sharing a start point with $\alpha$ and with end point 1 away from $\alpha$'s endpoint, then there exists a Morse gauge $M''$, depending only on $M'$, such that $\beta$ is $M''$-Morse. Lemma \ref{lem: fellow traveling} therefore gives us that $d(\alpha (t), \beta(t))<8M''(3,0)$ for all $t<|\alpha|-1-4M''(3,0)$.
\end{remark}

\subsection{The regularity of $L_M$} \label{sec:finite_Morse_cone_type}

As discussed in the previous section, the main step in proving  the regularity of $L_M$ is proving that any Morse local-to-global group has only finitely many Morse cone types for each Morse gauge.

\begin{theorem} \label{thm: finite many cone types}
Let $G$ be a finitely generated group with finite generating set $A$. If $G$ is a Morse local-to-global group, then for each Morse gauge $M$, there are only finitely many $M$-Morse cone types of geodesic words in $\cay(G,A)$.
\end{theorem}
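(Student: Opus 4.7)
The plan is to adapt Cannon's classical proof of finiteness of cone types for hyperbolic groups \cite{Cannon1984} to the Morse setting. Associate to each geodesic word $u$ the pair of local data $(\Cone_N^M(u), T_N(u))$ for a constant $N$ to be fixed below. Both entries are subsets of the finite ball of radius $N$ about $e$ in $\cay(G, A)$, so this pair takes only finitely many values; the theorem therefore reduces to showing that the pair determines the full cone $\Cone^M(u)$.

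Calibrate $N$ using the Morse local-to-global property. Let $M'$ be a Morse gauge so that every $(B_M; M)$-locally Morse geodesic is globally $M'$-Morse, let $M''$ be the Morse gauge furnished by Lemma \ref{lem: Morse is contagious} controlling geodesics sharing endpoints with $M'$-Morse geodesics, and let $K$ be a fellow-traveling constant for pairs of $M''$-Morse geodesics with endpoints within distance $1$, as supplied by Lemma \ref{lem: fellow traveling} together with Remark \ref{rem:Morse gauge for fellow traveling}. Fix an integer $N > B_M + K$ large enough so that the propagation sublemma below applies.

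The main claim is that $\Cone_N^M(u) = \Cone_N^M(v)$ and $T_N(u) = T_N(v)$ imply $\Cone^M(u) = \Cone^M(v)$. The proof proceeds by induction on $|w|$ for $w \in \Cone^M(u)$, showing $w \in \Cone^M(v)$. The base case $|w| \leq N$ is immediate from the equality of $N$-restricted cones. For $|w| > N$, write $w = w'a$ with $a \in A$; since local Morseness passes to prefixes, $w' \in \Cone^M(u)$, and the inductive hypothesis gives $vw' \in \Cone^M(v)$, so $vw'$ is a $(B_M; M)$-locally Morse geodesic. Because $|w'| > N > B_M$, every sub-interval of $vw'a$ of length at most $B_M$ lies either entirely within $vw'$ (and is $M$-Morse by local Morseness of $vw'$) or entirely within $w'a$ (and is $M$-Morse by transferring the corresponding sub-interval of $uw'a$ via the left-translation invariance of the Morse property on geodesic sub-intervals). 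Hence the $(B_M; M)$-local Morse property of $vw'a$ is automatic once its geodesicity is established.

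The main obstacle is establishing geodesicity of $vw'a$. Suppose, for contradiction, that $|\overline{vw'a}| \leq |vw'|$; then a competitor geodesic $\tau$ from $e$ to $\overline{vw'a}$ of length at most $|vw'|$ exists, and by Lemma \ref{lem: Morse is contagious} is $M''$-Morse. Lemma \ref{lem: fellow traveling} forces $\tau$ and $vw'$ to fellow-travel within distance $K$, with any disagreement concentrated in the last segment—entirely within the $w'$ portion because $|w'| > N > K$. Analysis of the two subcases $|\overline{vw'a}| = |vw'|-1$ and $|\overline{vw'a}| = |vw'|$ then extracts an element $g$ of length at most $2$ with $g \in T_N(vw')$, witnessing the local failure of geodesic extension at $\overline{vw'}$. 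The crux is a propagation sublemma: for $N$ chosen large enough, the equalities $\Cone_N^M(u) = \Cone_N^M(v)$ and $T_N(u) = T_N(v)$ are preserved under common right-multiplication by any generator that extends both $u$ and $v$ geodesically, and therefore propagate letter by letter along $w'$ to yield $T_N(uw') = T_N(vw')$. The same element $g$ thus lies in $T_N(uw')$, forcing $uw'a$ to fail to be geodesic and contradicting $w \in \Cone^M(u)$. Proving the propagation sublemma rigorously—verifying that the $N$-type is closed under one-step extension for $N$ large—is the technical core of the argument, and is handled by the same fellow-traveling strategy applied within bounded neighborhoods of $\overline{v}$ and $\overline{u}$.
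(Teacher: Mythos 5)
Your skeleton (finiteness of the pairs $(\Cone^M_N(u),T_N(u))$, induction along the word, and the splitting of length-$\le B_M$ subintervals between $vw'$ and $w'a$ to get local Morseness) matches the paper, but the geodesicity step is routed through a ``propagation sublemma'' --- that $\Cone^M_N(u)=\Cone^M_N(v)$ and $T_N(u)=T_N(v)$ are preserved under right multiplication by a common extending generator, hence give $T_N(uw')=T_N(vw')$ --- and this is exactly the crux you have not proved. It is not handled by ``the same fellow-traveling strategy'': a witness $g$ for $g\in T_N(ua)$ gives a competitor geodesic whose endpoint is within $N$ (not $1$) of $\overline{ua}$, so Lemma \ref{lem: fellow traveling} only guarantees fellow traveling up to roughly distance $N+4M''(3,0)$ before the endpoint; splitting the competitor there produces a tail element of $u$ of length about $N+12M''(3,0)>N$, so the radius of the data you need at $(u,v)$ is strictly larger than the radius you conclude at $(ua,va)$, and it degrades at every letter of $w'$ --- you cannot iterate along an arbitrarily long word with a fixed $N$. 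Worse, the set-valued tail records only that $|\overline{u}z|<|\overline{u}|$, not by how much, so when you rebuild a path for $\overline{va}g$ from $\overline{v}z$ you only save $1$ while the splice point may sit $N+O(M''(3,0))$ before $\overline{u}$; the rebuilt path can exceed $|\overline{va}|$ and no contradiction is obtained. (One could try to repair this by enriching the tail to the function $g\mapsto|\overline{u}g|-|\overline{u}|$ on a larger ball and redoing the estimate, but that is a different argument and you would still have to carry out the bookkeeping; as written, the sublemma is asserted, not proved, and its truth with your exact data is unclear.)

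The paper avoids propagation entirely, and this is the key difference. There the tail is used only at the base words $u,v$: assuming $uwa$ (resp.\ $vwa$) fails to be geodesic, one takes a global competitor geodesic $p$ for $\overline{uwa}$, whose endpoint is at distance exactly $1$ from the endpoint of the locally Morse (hence $M'$-Morse) geodesic $uw$, splits $p$ at parameter $\ell(u)-1$, and uses Lemma \ref{lem: fellow traveling} and Remark \ref{rem:Morse gauge for fellow traveling} to see that the split point lies within $8M''(3,0)+1\le m$ of $\overline{u}$; the resulting element lies in $T_m(u)=T_m(v)$, and the rebuilt word $\nu p_2$ is shorter than $\ell(v)+\ell(w)+1$, contradicting that $vwa$ is geodesic. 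The fellow traveling reaches the split point precisely because the endpoint gap is $1$ and $\ell(w)\ge m$ exceeds the threshold; none of this requires knowing the tails of $uw'$ and $vw'$ agree. You should replace your propagation step by this base-point transfer (with the induction run on the radius of the restricted cones, or on $|w|$, either works); until the propagation sublemma is actually proved, your argument has a genuine gap at its central step.
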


\begin{proof}
Let $M$ be a Morse gauge. Recall, $B_M \geq 0$ is the local scale so that geodesics  in $\cay(G,A)$ that are $(B_M,M)$-locally Morse are globally $M'$-Morse.  Let $M''$ be the Morse gauge from Remark \ref{rem:Morse gauge for fellow traveling} determined by $M$ and $\cay(G,A)$.  For each $N \geq 0$, there are only finitely many possible $N$-tails and $N$-restricted $M$-Morse cones for all geodesic words in $\cay(G,A)$. Thus, to prove $\cay(G,A)$ contains only a finitely many $M$-Morse cone types, it suffices to prove the following claim.

\begin{claim}\label{claim:same_tails_imply_same_cones}
There exists $m$ depending only on $M$, $A$, and $G$ so that if two geodesic words $u,v \in A^\star$ have $T_m(u) = T_m(v)$ and $\Cone^M_m(u) = \Cone^M_m(v)$, then $\Cone^M(u) = \Cone^M(v)$.
\end{claim}

Let $m$ be the first integer greater than or equal to $\max\{1+8M''(3,0), B_M+1\}$ and assume $u,v \in A^\star$ are geodesic words in $\cay(G,A)$ with  $T_m(u) = T_m(v)$ and $\Cone^M_m(u) = \Cone^M_m(v)$. To prove Claim \ref{claim:same_tails_imply_same_cones}, we will prove by induction that $\Cone^M_N(u) = \Cone^M_N(v)$ for all $N \in \mathbb{N}$.

If $N \leq m$, then $ \Cone^M_N(u) = \Cone^M_N(v)$ by assumption. Assume that $\Cone^M_N(u) = \Cone^M_N(v)$ for some $N > m$. We will show this implies $\Cone^M_{N+1}(u) = \Cone^M_{N+1}(v)$.  Note, it suffices to only prove $\Cone^M_{N+1}(v) \subseteq \Cone^M_{N+1}(u)$ as a mirrored argument will establish the equality.

Let $q \in \Cone^M_{N+1}(v)$. Thus, there exists $w \in \Cone^M_{N}(v)$ and $a \in A^\star$ such that $q = wa$. By the induction hypothesis, $w \in \Cone^M_N(u)$. 

We first prove $uq = uwa$ is a geodesic word in $\cay(G,A)$. This part of the proof closely follows \cite{EikeZalloum} and the cone types section of \cite{BH}. Recall, for $\omega \in A^\star$, $\ell(\omega)$ denotes the word length in the free monoid $A^\star$, while $| \overline{\omega}|$ denotes the length of a geodesic word  in $\cay(G,A)$ that represents the group element $\overline{\omega}$. In particular $\ell(\omega) = |\overline{\omega}|$ if and only if $\omega$ is a geodesic word in $\cay(G,A)$.

For the purposes of contradiction, suppose $uq = uwa$ is not a geodesic word. Then, there must exist some geodesic word $p$ of length strictly less than $\ell(u)+\ell(w)+1$ such that $\overline{p}=\overline{uwa}$. Write $p$ as a product $p_1p_2$ such that $\ell(p_1)= \ell(u)-1$ and $\ell(p_2) \leq \ell(w)+1.$  Since $w \in \Cone^{M}(u)$,  $uw$ is a geodesic word that is $(B_M;M)$-locally Morse.  Thus, $uw$ is $M'$-Morse because $G$ has the Morse local-to-global property. Since the geodesics in $\cay(G,A)$ starting at $e$ and labeled by $uw$ and $p$ end 1 apart from each other, Remark \ref{rem:Morse gauge for fellow traveling} gives us that $d(\overline{p}_1,\overline{u})<8M''(3,0)+1$. Define $z=u^{-1}p_1$, hence, the group element $\overline{z}=\overline{u^{-1}p_1}$ satisfies $|\overline{z}| \leq 8M''(3,0)+1 \leq m$ and $|\overline{uz}| <|\overline{u}|$. This implies that $\overline{z} \in T_m(u).$ Recall that $T_m(u)=T_m(v)$ by assumption, so $\overline{z} \in T_m (v)$ and $|\overline{vz}|<|\overline{v}|$. Let  $\nu$ be any geodesic word representing the group element $\overline{vz}$.
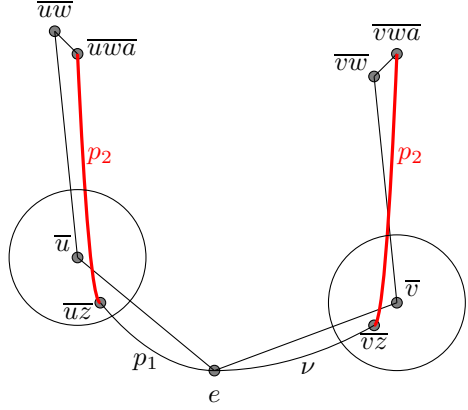
\begin{figure}[ht]
    \centering

    \begin{tikzpicture}[scale=.3]
        %\draw (-10,0) grid (10,20);
        
        % Define significant coordinate points
        \coordinate (id) at (0,0);
        \coordinate (u) at (-6,5);
        \coordinate (v) at (8, 3);
        \coordinate (w) at (-1,10);
        \coordinate (a) at (1,1);
        \coordinate (uw) at (-7,15);
        \coordinate (vw) at (7,13);
        \coordinate (uwa) at (-6,14);
        \coordinate (vwa) at (8,14);
        \coordinate (uz) at (-5,3);
        \coordinate (vz) at (7,2);

        % Draw points with labels
        \draw (id) node[vertex,label={[shift={(0,-.5)}]$e$}]{};
        \draw (u) node[vertex,label={[shift={(-0.2,0)}]$\overline{u}$}]{};
        \draw (v) node[vertex,label={[shift={(.2,0)}]$\overline{v}$}]{};
        \draw (uw) node[vertex, label={[shift={(0,.1)}]$\overline{uw}$}]{};
        \draw (uwa) node[vertex, label={[shift={(.45,-.1)}]$\overline{uwa}$}]{};
        \draw (vw) node[vertex, label={[shift={(-.3,0)}]$\overline{vw}$}]{};
        \draw (vwa) node[vertex, label={[shift={(0,0.1)}]$\overline{vwa}$}]{};
        \draw (uz) node[vertex, label={[shift={(-.3,-.3)}]$\overline{uz}$}]{};
        \draw (vz) node[vertex, label={[shift={(0,-.4)}]$\overline{vz}$}]{};
        % Labels for some of the paths
        \draw (-5,9) node[label={\color{red}$p_2$}]{};
        \draw (8.6,9) node[label={\color{red}$p_2$}]{};
        \draw (4.1,-.25) node[label={$\nu$}]{};
        \draw (-3,-.1) node[label={$p_1$}]{};

        % Add lines connecting points
        \draw (id)--(u);
        \draw (u)--(uw);
        \draw (uw)--(uwa);
        \draw (id)--(v);
        \draw (v)--(vw);
        \draw (vw)--(vwa);
        \draw (id) parabola (uz);
        \draw[very thick, color=red] (uz) parabola (uwa);
        \draw[very thick, color=red] (vz) parabola (vwa);
        \draw (id) parabola (vz);
        
        % Draw circles around u and v.
        \draw (u) circle (3cm);
        \draw (v) circle (3cm);

    \end{tikzpicture}
     \caption{The path starting at $e$ and labeled by $\nu p_2$ ends at $\overline{vwa}$.}
    \label{fig: geodesic cone}
\end{figure}

 As shown in  Figure \ref{fig: geodesic cone}, the  path starting at $e$ and labeled by $\nu p_2$ ends at the vertex $\overline{vwa}$ because
\begin{center}
$\overline{\nu p_2}=\overline{vzp_2}=\overline{vu^{-1}p_1p_2}=\overline{vu^{-1}uwa}=\overline{vwa}.$
\end{center}
 Since $vwa$ is a geodesic word,  this implies $\ell(\nu p_2) \geq \ell(vwa) = \ell(v)+\ell(w)+1.$  On the other hand,  since $\ell(\nu) = | \overline{vz} |< | \overline{v} | =\ell(v)$ and $\ell(p_2) \leq \ell(w) +1$ we get  the contradiction

 \begin{center}
 $\ell(\nu p_2) \leq \ell(\nu)+\ell(p_2) < \ell(v)+\ell(w)+1$.
 \end{center}
 Therefore, $uq = uwa$ must be a geodesic word.

We now prove that $uq = uwa$ is $(B_M;M)$-locally Morse.  Let $\gamma$ be the path in $\cay(G,A)$ starting at $e$ and labeled by $uwa =uq$.  Since $\ell(wa) > m >  B_M$, any subpath of $\gamma$ with length at most $B_M$ must be contained in a subpath labeled by either $uw$ or $wa$. However, any path in $\cay(G,A)$ label by $uw$ or $wa$ is $(B_M;M)$-locally Morse since $w \in \Cone^M(u)$ and $wa \in \Cone^M(v)$ respectively. Thus, $\gamma$, and hence $uwa=uq$, is $(B_M;M)$-locally Morse.

Since $uq = uwa$ is a geodesic word that is $(B_M;M)$-locally Morse, $q \in \Cone^M_{N+1}(u)$ and $\Cone^M_{N+1}(v) \subseteq \Cone^M_{N+1}(u)$. Switching the role of $u$ and $v$ we have that $$\Cone^M_N(u) = \Cone^M_N(v) \implies \Cone^M_{N+1}(u) = \Cone^M_{N+1}(v)$$ as desired. Therefore, by induction, we have proved Claim \ref{claim:same_tails_imply_same_cones}, which in turn proves Theorem \ref{thm: finite many cone types}.
\end{proof}

Since there are only finitely many $M$-Morse cone types, we can build an automaton that reads the language $L_M$ using the $M$-Morse cone types as the vertices.

\begin{corollary}\label{cor: Morse geodesic form regular langauge}
Let $G$ be a Morse local-to-global group. For any finite generating set $A$ and any Morse gauge $M$, the language $L_M$ from Definition \ref{defn:L_M} is regular.
\end{corollary}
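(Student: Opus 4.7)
The plan is to build a finite state automaton $\mathcal{G}_M$ whose states are the $M$-Morse cone types of geodesic words in $\cay(G,A)$, using Theorem \ref{thm: finite many cone types} to guarantee finiteness. Concretely, I would take one state for each $M$-Morse cone type, add a single ``dead'' state, designate every non-dead state as an accept state, and take the initial state to be the cone type of the empty word. The transition out of a state $[u]$ on input $a \in A$ is $[ua]$ if $a \in \Cone^M(u)$, and the dead state otherwise (all transitions out of the dead state loop back to it).

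The first thing to verify is that the transition function is well-defined, i.e., that $\Cone^M(ua)$ depends only on $\Cone^M(u)$ and the letter $a$. This is essentially a restatement of the definition: unfolding Definition \ref{defn:Morse_cones}, one has $w \in \Cone^M(ua)$ iff $uaw$ is a geodesic word that is $(B_M;M)$-locally Morse in $\cay(G,A)$, which is exactly the condition $aw \in \Cone^M(u)$. Hence $\Cone^M(ua) = \{w \in A^\star : aw \in \Cone^M(u)\}$, which depends only on $\Cone^M(u)$. So if $\Cone^M(u) = \Cone^M(v)$ and $a \in \Cone^M(u) = \Cone^M(v)$, then $\Cone^M(ua) = \Cone^M(va)$, and the transitions on $\mathcal{G}_M$ make sense at the level of cone types.

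The second thing is to check that the language accepted by $\mathcal{G}_M$ is exactly $L_M$. By construction, a word $w = a_1 \cdots a_n$ is accepted by $\mathcal{G}_M$ if and only if $a_i \in \Cone^M(a_1 \cdots a_{i-1})$ for each $i$, which is the statement that every prefix $a_1 \cdots a_i$ is a geodesic word in $\cay(G,A)$ that is $(B_M;M)$-locally Morse. On the one hand, if $w \in L_M$, then $w$ is a geodesic word and is $(B_M;M)$-locally Morse, and both properties are inherited by prefixes (prefixes of geodesic words are geodesic words, and any subpath of a prefix is a subpath of the whole, so the local Morse condition survives), so $w$ is accepted. On the other hand, if $w$ is accepted then its longest prefix is $w$ itself, giving $w \in L_M$.

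The main obstacle here is not the automaton construction but the finiteness of its state set, and that work has already been done in Theorem \ref{thm: finite many cone types}. Given that finiteness and the observation that cone types evolve by a well-defined letter-by-letter rule, $\mathcal{G}_M$ is a bona fide finite state automaton accepting $L_M$, so $L_M$ is regular. In particular this completes item (\ref{item:reg_language_2}) of Theorem \ref{thm:regular_language_for_Morse_geodesics}.
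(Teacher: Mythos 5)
Your proposal is correct and is essentially the paper's own argument: the automaton whose states are the $M$-Morse cone types (finitely many by Theorem \ref{thm: finite many cone types}), with an edge labeled $a$ from the type of $u$ to the type of $ua$ exactly when $a \in \Cone^M(u)$, initial state the type of the empty word, and all cone-type states accepting. Your extra checks---well-definedness of the transition rule and closure of $L_M$ under prefixes---are worthwhile details the paper leaves implicit, and the added dead state is just the standard way of making the transition function total.
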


\begin{proof}
Let $\mc{G}$ be the following finite state automaton.
\begin{itemize}
    \item The states are $M$-Morse cone types of geodesic words in $A^\star$.  Theorem \ref{thm: finite many cone types} shows that there are finitely many states.
    \item There is a directed edge labeled by $a\in A$ connecting the $M$-Morse cone type of a geodesic word $u$ to the $M$-Morse cone type of $ua$ if and only if $a$ belongs to the $M$-Morse cone of $u$.
    \item The initial state is the $M$-Morse cone type of the identity word.
    \item All states of $\mc{G}$ are accept state.
\end{itemize}
The language $L_M$ is the accepted language of $\mc{G}$.
\end{proof}

\subsection{Improving $L_M$ to a language of unique geodesic representatives}\label{sec:unique_representatives}

The language $L_M$ from Definition \ref{defn:L_M} may contain multiple different geodesic words that represent the same group element. The goal of this subsection is  to refine the language 
$L_M$ to a regular language $J_M \subseteq L_M$ that contains a unique geodesic representative for each group element represented by a word in $L_M$.  The first step in this process is proving that the regularly of $L_M$ implies that that language of pairs of words in $L_M$ that represent the same element of $G$ is also regular. Note, we view the free monoid $(A \times A)^\star$ as a subset of the monoid $A^\star \times A^\star$.

\begin{lemma}[Equality recognizer] \label{lem: equality recognizer} Let $G$ be a Morse local-to-global group with finite generating set $A$, and let $M$ be a Morse gauge. The language $Q_M \subseteq (A\times A)^\star$ consisting of 
\begin{center}
$Q_M=\{(u,v) \in (A \times A)^\star \mid \text{ $u, v \in L_M$ and $\overline{u}=\overline{v}$}\}$
\end{center}
is a regular language.
\end{lemma}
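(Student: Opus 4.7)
The plan is to build a finite state automaton over the alphabet $A \times A$ whose accepted language is $Q_M$. The construction is a product of two copies of the automaton $\mc{G}_M$ for $L_M$ from Corollary \ref{cor: Morse geodesic form regular langauge}, decorated with a third coordinate that tracks the group-theoretic ``difference'' between the two words read so far. Note that $(u,v) \in (A \times A)^\star$ automatically forces $\ell(u) = \ell(v)$, and this is not a loss of generality: if $u,v \in L_M$ are geodesic words with $\overline{u} = \overline{v}$, then $\ell(u) = |\overline{u}| = |\overline{v}| = \ell(v)$.

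The key ingredient is a fellow-traveling bound. By Theorem \ref{thm:regular_language_for_Morse_geodesics}(\ref{item:reg_language_1}), there is a Morse gauge $M'$, depending only on $M$, so that every word in $L_M$ labels an $M'$-Morse geodesic in $\cay(G,A)$. Hence, if $(u,v) \in Q_M$, the two geodesics from $e$ labeled by $u$ and $v$ are $M'$-Morse geodesics sharing both endpoints, and Lemma \ref{lem: fellow traveling} gives a constant $K = \lceil 12 M'(3,0) \rceil$ such that, writing $u_i$ and $v_i$ for the length-$i$ prefixes, the group element $g_i := \overline{u_i}^{-1}\overline{v_i}$ satisfies $|g_i| \leq K$ for every $i \in \{0, 1, \dots, \ell(u)\}$. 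Crucially, the finite set $B_K := \{g \in G : |g| \leq K\}$ then serves as a finite state space for tracking this difference.

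Construct the automaton $\mc{H}$ as follows. Let $\mc{G}_M = (\Gamma_M, A, Y_M, s_0)$ be the automaton accepting $L_M$, with transition function $\delta$. Take the state set of $\mc{H}$ to be $\bigl(\Gamma_M \times \Gamma_M \times B_K\bigr) \cup \{\ast\}$, with $\ast$ an absorbing dead state, initial state $(s_0, s_0, e)$, and accept states $\{(y_1, y_2, e) : y_1, y_2 \in Y_M\}$. The transition on input $(a,b) \in A \times A$ from state $(s_1, s_2, g)$ goes to $\bigl(\delta(s_1, a), \delta(s_2, b), a^{-1} g b\bigr)$ whenever $\delta(s_1,a)$ and $\delta(s_2, b)$ are defined and $a^{-1}gb \in B_K$; otherwise it goes to $\ast$. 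The state space is finite because $\Gamma_M$ is finite and $B_K$ is finite (the latter because $A$ is finite).

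It remains to verify that the accepted language of $\mc{H}$ is exactly $Q_M$. For the forward inclusion, given $(u,v) \in Q_M$, the fellow-traveling bound guarantees $g_i \in B_K$ at every step, so the run on $(u,v)$ never enters $\ast$; the first two coordinates track the runs of $\mc{G}_M$ on $u$ and $v$, which end in accept states since $u, v \in L_M$, and the third coordinate ends at $\overline{u}^{-1}\overline{v} = e$. For the reverse inclusion, any accepted pair $(u,v)$ yields projections to the first two coordinates showing $u, v \in L_M$, while the third coordinate ending at $e$ forces $\overline{u} = \overline{v}$. The only genuine obstacle in the argument is establishing the uniform fellow-traveling of two geodesic representatives of the same element in $L_M$; this is precisely where the Morse local-to-global hypothesis (used to upgrade local $M$-Morse geodesics to global $M'$-Morse geodesics in Theorem \ref{thm:regular_language_for_Morse_geodesics}) is essential, since without it the difference set $B_K$ need not be bounded.
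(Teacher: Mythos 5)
Your proposal is correct and follows essentially the same route as the paper: a product of two copies of the automaton for $L_M$ with a third coordinate in the ball $B(e,12M'(3,0))$ recording the prefix difference, with the fellow-traveling bound of Lemma \ref{lem: fellow traveling} (applied to the two $M'$-Morse geodesics obtained via the local-to-global upgrade) guaranteeing this coordinate stays in a finite set. The only deviations (an explicit dead state, a deterministic transition function for the $L_M$ automaton) are cosmetic and harmless.
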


\begin{proof} Let $M'$ be the Morse gauge determined by $M$ and $\cay(G,A)$ so that each word in $L_M$ is an $M'$-Morse geodesic word in $\cay(G,A)$; see Remark \ref{rem:Morse gauge for fellow traveling}. Let $r = 12M'(3,0)$.

By Corollary \ref{cor: Morse geodesic form regular langauge}, there exists a finite state automaton $\mc{G}$ that accepts the language $L_M$. Denote the vertices of this automaton by $V(\mc{G})$. We will use $\mc{G}$ to build a finite state automaton $\mc{Q}$ with alphabet $A\times A$  whose accepted language is  $Q_M$. Define $\mc{Q}$ as follows:

\begin{itemize}
    \item The vertex set for $\mc{Q}$ is $V(\mc{G})\times  V(\mc{G}) \times B(e,r)$ where $B(e,r)$ is the set of elements of $G$ whose geodesic word length is at most $r$.
    \item There is a directed edge from $(s,\sigma,g)$ to $(t,\tau,h)$ labeled by $(a,\alpha) \in A \times A$ if all of the following are satisfied.
    \begin{enumerate}
        \item There is a directed edge from $s$ to $t$ labeled by $a$ in $\mc{G}$.
        \item There is a directed edge from $\sigma$ to $\tau$ labeled by $\alpha$ in $\mc{G}$.
        \item $h = a^{-1} g \alpha$ is in  $B(e,r)$.
    \end{enumerate}
    \item The accept states of $\mc{Q}$ are vertices of the form $(s,\sigma,e)$ where $s$ and $\sigma$ are both accept states of $\mc{G}$.
    \item The initial state of $\mc{Q}$ is $(s_0,s_0,e)$ where $s_0$ is the initial state of $\mc{G}$.
\end{itemize}

A directed path in $\mc{Q}$ from the initial state $(s_0,s_0,e)$ to an accept state $(s,\sigma,e)$ will read a word $(w,\omega) \in (A \times A)^\star$ where $\overline{w} = \overline{\omega}$ in $G$ and $w,\omega \in L_M$. Thus, the accepted language of $\mc{Q}$ is contained in $Q_M$.

To see that the accepted language of $\mc{Q}$ is exactly $Q_M$, let $(w,\omega) \in Q_M$. Since $w,\omega \in L_M$, they are both geodesic words representing the same element of $G$. Thus, $\ell(w) = \ell(\omega)$.  Let $w = a_1 \cdots a_n$ and $\omega = \alpha_1 \cdots \alpha_n$ where each $a_i$ and $\alpha_i$ are elements of $A$. Since $L_M$ is the accepted language for $\mc{G}$, there are directed paths $\eta_w$ and $\eta_\omega$  in $\mc{G}$ that read $w$ and $\omega$ respectively. For $i \in \{1,\dots,n\}$, let $s_i$ (resp. $\sigma_i$) be the vertex of $\mc{G}$ that is reached by the subsegment of $\eta_w$ (resp. $\eta_\omega$) that is labeled by $a_1\cdots a_i$ (resp. $\alpha_1 \cdots \alpha_i$). Since any subwords of $w$ and $\omega$ are also geodesic words that are $(B_M;M)$-locally Morse, we have  that $s_i$ and $\sigma_i$ are accept states of $\mc{G}$ for each $i $. Since $w$ and $\omega$ are $M'$-Morse geodesic words in $\cay(G,A)$ with $\overline{w} = \overline{\omega}$, Lemma \ref{lem: fellow traveling} implies $| \overline{a}_i^{-1}\cdots \overline{a}_1^{-1} \overline{\alpha}_1 \cdots \overline{\alpha}_i| \leq r = 12M'(3,0)$ for each $i$. Thus, $(s_i,\sigma_i,\overline{a}_i^{-1}\cdots \overline{a}_1^{-1} \overline{\alpha}_1 \cdots \overline{\alpha}_i)$ is a vertex of $\mc{Q}$ for each $i$. Hence, there is a path from $(s_0,s_0,e)$ to $(s_n,\sigma_n,e)$ in $\mc{Q}$ establishing that $(w, \omega)$  is in the accepted language of $\mc{Q}$.
\end{proof}

Since the language $Q_M$ in Lemma \ref{lem: equality recognizer} is regular, we can  invoke a special case of a proposition of Baumslag, Gersten, Shapiro, and Short to refine $L_M$.

\begin{proposition}[{\cite[Proposition 5.5]{BaumslagGerstenShapiroShort1991}}] \label{prop: lexographically least language}
Let $A$ be a finite set with a total order $\preceq$. Extend $\preceq$ to a lexicographic ordering on $A^\star$. If $Q$ is a regular language with alphabet $A \times A$, then
\begin{center}
    $J=\{u \in A^\star \mid (u,v) \in Q \text{ for some $v$}\text{ and $(u,w) \in Q \implies u \preceq w$} \}$
\end{center} 
is a regular language with alphabet $A$.

\end{proposition}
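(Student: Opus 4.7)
The plan is to express $J$ as a Boolean combination of projections and intersections of regular languages, using the standard closure properties of the class of regular languages. The key rewrite is to replace the universal quantifier ``for all $w$ with $(u,w) \in Q$, $u \preceq w$'' by the negation of an existential: ``there does not exist $w \prec u$ with $(u,w) \in Q$''. With this in hand, $J$ becomes the set of words that lie in the domain of $Q$ but not in the set of first coordinates of pairs whose second coordinate is strictly lex-smaller.

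First I would construct a simple auxiliary regular language encoding strict lexicographic comparison. Define $P \subseteq (A \times A)^\star$ to be the set of pairs of (necessarily equal-length) words $(u,w)$ with $w \prec u$. This is recognized by a three-state automaton: a start state ``so far the two components agree'', a terminal accept state ``a position where $w < u$ has already been seen'', and a dead state ``a position where $u < w$ has already been seen''. Transitions are determined by comparing the two components of each input letter $(a,b) \in A \times A$ using the total order on $A$. Thus $P$ is regular, and hence so is $R := Q \cap P$ by closure under intersection.

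Next, let $\pi_1 \colon (A \times A)^\star \to A^\star$ be the monoid homomorphism sending $(a,b) \mapsto a$. Since regular languages are closed under homomorphic image,
\[
\pi_1(Q) = \{u \in A^\star \mid \exists v,\ (u,v) \in Q\}
\quad\text{and}\quad
\pi_1(R) = \{u \in A^\star \mid \exists w \prec u,\ (u,w) \in Q\}
\]
are both regular. The condition defining $J$ is precisely ``$u \in \pi_1(Q)$ and $u \notin \pi_1(R)$'', so $J = \pi_1(Q) \setminus \pi_1(R)$ is regular as a Boolean combination of regular languages.

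The argument is essentially bookkeeping with closure properties, so I do not expect a genuine obstacle. The only subtle point is verifying that recognizing strict lex order on $(A \times A)^\star$ really is a local, finite-state operation; this works precisely because pairs in $(A \times A)^\star$ have equal-length coordinates, so the comparison is determined by the first differing position and is ``memoryless'' once such a position has been observed. If one allowed pairs of unequal length, one would need extra machinery to detect the shorter word ending, but that situation never arises here.
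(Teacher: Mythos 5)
Your argument is correct. Note that the paper does not prove this statement at all: it is quoted verbatim from Baumslag--Gersten--Shapiro--Short (their Proposition 5.5), so there is no in-paper proof to compare against. What you give is the standard self-contained argument via closure properties, and it goes through: since every element of $(A\times A)^\star$ is a pair of equal-length words, the strict-comparison language $P=\{(u,w)\mid w\prec u\}$ is indeed recognized by the three-state ``agree so far / $w$ already smaller / $u$ already smaller'' automaton; $R=Q\cap P$ is regular by closure under intersection; the letter-to-letter projection $\pi_1$ preserves regularity (relabel the edges of an automaton for $Q$ or $R$, then determinize as needed for the complementation step); and because $\preceq$ is a total order, the universally quantified condition ``$(u,w)\in Q\implies u\preceq w$'' is exactly the non-existence of a pair in $R$ with first coordinate $u$, so $J=\pi_1(Q)\setminus\pi_1(R)$ is regular. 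This is essentially how such statements are established in the automatic-groups literature (regularity of languages defined by quantification over regular predicates, as in Epstein et al.\ and in BGSS themselves), so your route is a faithful reconstruction of the cited result rather than a divergence from the paper.
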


\begin{corollary}[Short-lex sublanguage]

\label{cor: lexographically least Morse geodesics}
Let $G$ be a Morse local-to-global group with finite generating set $A$, and let $M$ be a Morse gauge. Let $\preceq$ be a lexicographic ordering on $A^\star$ induced by a total order on $A$. The language $J_M \subseteq L_M$ defined as 
\begin{center}
    $J_M= \{u \in L_M \mid  \text{whenever } v \in  L_M \text{ with } \overline{u}=\overline{v} \text{ we have } u \preceq v      \}$
\end{center} 
is regular and has the property: if $u,v \in J_M$ with $\overline{u}= \overline{v}$ in $G$, then $u=v$.
\end{corollary}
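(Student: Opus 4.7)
The plan is to derive this corollary essentially as a one-line consequence of the equality recognizer Lemma \ref{lem: equality recognizer} combined with the Baumslag--Gersten--Shapiro--Short extraction result (Proposition \ref{prop: lexographically least language}), once we verify that the language $J_M$ defined here coincides with the language $J$ produced by applying that proposition to the regular language $Q_M$.

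First, I would invoke Lemma \ref{lem: equality recognizer} to obtain that
\[Q_M = \{(u,v) \in (A \times A)^\star \mid u,v \in L_M \text{ and } \overline{u} = \overline{v}\}\]
is a regular language over the alphabet $A \times A$. Next, I would apply Proposition \ref{prop: lexographically least language} to $Q_M$ with respect to the given lexicographic order $\preceq$ on $A^\star$, producing the regular language
\[J = \{u \in A^\star \mid (u,v) \in Q_M \text{ for some } v,\ \text{and } (u,w) \in Q_M \implies u \preceq w\}.\]

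The key verification step is to check that $J = J_M$. In one direction, if $u \in J_M$, then $u \in L_M$, so $(u,u) \in Q_M$, giving the existential clause; and whenever $(u,w) \in Q_M$ we have $w \in L_M$ with $\overline{u} = \overline{w}$, so $u \preceq w$ by the definition of $J_M$. Conversely, if $u \in J$, then $(u,v) \in Q_M$ for some $v$ forces $u \in L_M$, and the implication $(u,w) \in Q_M \implies u \preceq w$ is precisely the condition that $u \preceq w$ for every $w \in L_M$ with $\overline{w} = \overline{u}$. Hence $J_M = J$ and is regular.

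Finally, uniqueness of representatives is immediate from antisymmetry of $\preceq$: if $u,v \in J_M$ with $\overline{u} = \overline{v}$, then $v \in L_M$ with $\overline{u} = \overline{v}$ forces $u \preceq v$ by membership of $u$ in $J_M$, and symmetrically $v \preceq u$, so $u = v$. There is no real obstacle here beyond the translation between the two definitions of the lex-least language; the substantive content has already been established in Lemma \ref{lem: equality recognizer} and Corollary \ref{cor: Morse geodesic form regular langauge}.
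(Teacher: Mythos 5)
Your proposal is correct and follows essentially the same route as the paper: reduce to the equality-recognizer language $Q_M$, apply Proposition \ref{prop: lexographically least language}, and conclude uniqueness from antisymmetry of $\preceq$ (the paper simply asserts the identification $J_M = J$ that you verify explicitly).
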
 

\begin{proof}
Let $Q_M$ the regular language from Lemma \ref{lem: equality recognizer}. The language $J_M$ is precisely 
\begin{center}
    $J_M=\{u \in A^\star \mid (u,v) \in Q_M \text{ for some $v$}\text{ and $(u,w) \in Q_M \implies u \preceq w$} \}$.
\end{center}
Since the language $Q_M$ is regular, Proposition \ref{prop: lexographically least language} implies that $J_M$ is also regular. If $u,v \in J_M$ with $\overline{u}= \overline{v}$, then $u=v$ since $u \preceq v$ and $v \preceq u$.
\end{proof}
\section{Regular languages for stable subgroups.}\label{sec:reg_language_for_stable_subgroups}

In this section, we prove Theorem \ref{intro_thm:stable_subgroups}, that stable subgroups of Morse local-to-global groups are characterized by the regularity of the language of Morse geodesic words that represent elements of the subgroup. At the end of  section, we will present examples that show this characterization fails to hold if the hypothesis of stability on the subgroup is weekend.

\begin{definition}[The language $L_H$]\label{defn:L_H}
Let $G$ be a group generated by the finite set $A$. For a subgroup $H \leq G$, let $L_H$ denote the language of all geodesic words $w$ in $A^\star$  so that $\overline{w} \in H$.
\end{definition}

\begin{theorem}[Characterizing stable subgroups]\label{thm:regular_language_characterizing_stable_subgroups}
Let $G$ be a Morse local-to-global group with finite generating set $A$. A subgroup $H \leq G$ is stable if  and only if there exists a Morse gauge $M$ so that $L_H$ is a regular language all of whose elements are $M$-Morse geodesic words in $\cay(G,A)$.
\end{theorem}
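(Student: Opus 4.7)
The proof naturally splits along the biconditional, and the outline in Subsection \ref{subsec:outline} already signals the architecture; my plan is to flesh this out using the regular language $L_M$ built in Theorem \ref{thm:regular_language_for_Morse_geodesics} together with two generic lemmas about quasi-convexity and regular languages that the introduction attributes to Lemma \ref{lem:regular_languages_of_representative_implies_quasi-convex} and Lemma \ref{lem:quasiconvex_subgroups_regular_language}.

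For the forward direction, suppose $H$ is stable. By Definition \ref{def:stable_subgroup}, applied to the fixed generating set $A$, there exist a Morse gauge $M$ and constant $k$ such that every geodesic word representing an element of $H$ is $M$-Morse and the associated path lies in the $k$-neighborhood of $H$. The first conclusion shows $L_H \subseteq L_M$, where $L_M$ is the regular language of Theorem \ref{thm:regular_language_for_Morse_geodesics}, and in particular yields the uniform Morse gauge demanded by the statement. To obtain regularity, I would invoke Lemma \ref{lem:quasiconvex_subgroups_regular_language} to produce a regular language $K_{H,k}$ consisting of all words in $A^\star$ that represent an element of $H$ and whose paths stay in the $k$-neighborhood of $H$ (this is a standard Cayley-graph-with-cosets automaton whose states track the coset of the current prefix and the distance to $H$, with a halt once the distance exceeds $k$). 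The stability hypothesis guarantees $L_H \subseteq K_{H,k}$, and clearly $K_{H,k} \cap L_M$ consists exactly of geodesic words representing elements of $H$, i.e.\ $L_H = K_{H,k} \cap L_M$. Since regular languages are closed under intersection, $L_H$ is regular.

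For the backward direction, suppose $L_H$ is regular with every element an $M$-Morse geodesic word. The second property is immediate: every geodesic from $e$ to an element $h \in H$ is represented by some word in $L_H$, hence is $M$-Morse. To upgrade this to stability, I would apply Lemma \ref{lem:regular_languages_of_representative_implies_quasi-convex}: the regularity of $L_H$ forces $H$ to be quasi-convex in $\cay(G,A)$ with respect to the geodesic paths labeled by words of $L_H$. Concretely, the pumping-style argument behind that lemma produces a constant $k$, depending only on the number of states of an automaton recognizing $L_H$, so that each path coming from a word of $L_H$ remains in the $k$-neighborhood of $H$. Combining the two bounds, $H$ is $(M,k)$-stable in $\cay(G,A)$; the known quasi-isometry invariance of stability (which does not require the Morse local-to-global hypothesis) promotes this to stability with respect to every finite generating set.

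The main point where care is needed is the verification that the intersection $K_{H,k} \cap L_M$ really recovers $L_H$ rather than a proper sublanguage: this requires exactly the property built into $L_M$, namely that it contains \emph{every} $M$-Morse geodesic word, so that no geodesic representative of an element of $H$ is missed. The only other delicate point is correctly citing the quasi-isometry invariance of stability in the backward direction, so that the conclusion of the theorem matches the generating-set-independent form of Definition \ref{def:stable_subgroup} rather than just the $(M,k)$-stability with respect to $A$. Neither step should present a real obstacle given the regular language $L_M$ already in hand.
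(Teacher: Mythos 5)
Your proposal is correct and follows essentially the same route as the paper: the forward direction is the paper's Corollary \ref{cor:regular language bijecting with stable subgroup}, writing $L_H$ as the intersection of the regular languages $L_{H,k}$ (Lemma \ref{lem:quasiconvex_subgroups_regular_language}) and $L_M$, and the backward direction is the paper's Corollary \ref{cor:regular+Morse_implies_stable} via Lemma \ref{lem:regular_languages_of_representative_implies_quasi-convex}. Your remarks on why the intersection equals $L_H$ and on generating-set independence are exactly the (implicit) points the paper relies on.
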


\begin{proof}
The sufficient condition will be shown in Corollary \ref{cor:regular+Morse_implies_stable} and the necessary condition will be proved in Corollary \ref{cor:regular language bijecting with stable subgroup}.
\end{proof}

The sufficient condition for Theorem \ref{thm:regular_language_characterizing_stable_subgroups} follows from a much more general lemma that says if $L$ is a regular language consisting of words that represent elements of a subgroup $H$, then the paths in the Cayley graph corresponding to the words in $L$ must stay uniformly close to $H$. In particular, this direction does not need the hypothesis that the ambient group $G$ has the Morse local-to-global property.

\begin{lemma} \label{lem:regular_languages_of_representative_implies_quasi-convex}
Let $G$ be a group generated by the finite set $A$, and let $H$ be a subgroup of $G$. Suppose $L$ is a regular language such that $\overline{w} \in H$ for each $w \in L$. If $k \geq 0$ is the number of vertices of a pruned finite state automaton that accepts $L$, then for each $w \in L$ the path in $\cay(G,A)$ from $e$ to $\overline{w}$ labeled by $w$ is contained in the $k$-neighborhood of $H$ in $\cay(G,A)$.
\end{lemma}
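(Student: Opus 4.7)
The plan is a direct argument that leverages the pruned property of the automaton to extend every prefix of a word in $L$ to a full word in $L$ using only boundedly many additional letters. Let $\mc{G} = (\Gamma, A, Y, s_0)$ be a pruned finite state automaton accepting $L$, and let $k$ be the number of vertices of $\Gamma$. First I would argue that every vertex traversed by the path labeled by $w$ starting at $e$ lies within distance $k-1$ of $H$ in $\cay(G,A)$; the statement about the full path in $\cay(G,A)$ (including interior points of edges) will follow immediately by a half-edge slack.

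The key step is as follows. Fix $w \in L$ and let $u$ be any prefix of $w$. Reading $u$ from the initial state $s_0$ leads to some state $s$ of $\mc{G}$ (this state exists because reading $w$ produces a directed path in $\mc{G}$, of which the reading of $u$ is an initial segment). Since $\mc{G}$ is pruned, there exists a directed path from $s$ to some accept state. Among all such paths, choose one with no repeated vertices; since $\Gamma$ has $k$ vertices, this simple path has at most $k$ vertices and hence at most $k-1$ edges. Let $v \in A^\star$ be the word read by this extension path. Then the concatenation $uv$ is read by a directed path in $\mc{G}$ from $s_0$ to an accept state, so $uv \in L$, which by hypothesis gives $\overline{uv} \in H$. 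Consequently, using that $\overline{u}^{-1}\,\overline{uv} = \overline{v}$ is represented by a word of length at most $k-1$ in the generators $A$, we obtain
\[
d\bigl(\overline{u},\, H\bigr) \;\leq\; d\bigl(\overline{u},\, \overline{uv}\bigr) \;\leq\; \ell(v) \;\leq\; k-1.
\]

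Applying this to every prefix of $w$ shows that each vertex of $\cay(G,A)$ traversed by the path labeled by $w$ from $e$ lies within distance $k-1$ of $H$. Since every point on this path is within distance $1/2$ of one of these vertices along an edge of length $1$, every point on the path lies within distance $(k-1) + 1/2 \leq k$ of $H$, completing the proof.

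I do not anticipate any serious obstacle: the argument is essentially a diameter-style bound on the pruned automaton combined with the observation that labels of directed paths in $\mc{G}$ from $s_0$ to accept states produce elements of $H$. The only minor care needed is to insist on a \emph{simple} extension path in $\Gamma$ so that the length bound $k-1$ follows cleanly from the vertex count $k$; note too that this argument uses neither the Morse local-to-global property of $G$ nor any quasi-convexity or stability hypothesis on $H$, which matches the lemma's generality.
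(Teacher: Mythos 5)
Your proposal is correct and follows essentially the same argument as the paper: read the word along an accepting path, extend each prefix by a short (at most $k-1$ edges) path in the automaton to an accept state, and use that the resulting word lies in $L$ (hence represents an element of $H$) to bound the distance of each vertex from $H$. The only cosmetic differences are that the paper shortcuts to the specific accept state $s_n$ reached by $w$ rather than invoking prunedness to reach some accept state, and that you make explicit the half-edge slack for interior points of edges.
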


\begin{proof}
Let $\mc{G}$ be a pruned finite state automaton that accepts $L$ and let $k$ be the number of vertices of $\mc{G}$. Let $a_1\cdots a_n$ be the letters of a word $w \in L$.  Let $s_0,s_1,\dots, s_n$ be the sequence of vertices of $\mc{G}$ that appear along the directed path that reads $w$. Recall, this implies $s_n$ is an accept state of $\mc{G}$.  Since  $\mc{G}$ contains at most $k$ vertices, there exists a directed path with at most $k-1$ edges from $s_i$ to $s_n$ for each $i \in \{1,\dots,n\}$ (this path need not read a subword of $w$). Thus, for each $i \in \{1,\dots,n\}$ there exists a word $u_i \in A^\star$ so that $a_1\cdots a_i u_i$ is read by a path of $\mc{G}$ that ends at the accept state $s_n$ and $\ell(u_i) \leq k-1$. Hence $\overline{a_1\cdots a_i u_i} \in H$ since $a_1\cdots a_i u_i \in L$. Since $\ell(u_i) \leq k-1$, this implies the group element $\overline{a_1\cdots a_i}$ is contained in the $(k-1)$-neighborhood of $H$ in $\cay(G,A)$ for each $i \in \{1,\dots,n\}$. Thus, the path in $\cay(G,A)$ from $e$ to $\overline{w}$ labeled by $w$ is contained in the $k$-neighborhood of $H$ in $\cay(G,A)$.
\end{proof}

\begin{corollary}\label{cor:regular+Morse_implies_stable}
Let $G$ be a finitely generated group with finite generating set $A$. If the language $L_H$ from Definition \ref{defn:L_H} is regular and there exists a Morse gauge $M$ so that each word in $L_H$ is an $M$-Morse geodesic word in $\cay(G,A)$, then $H$ is stable.
\end{corollary}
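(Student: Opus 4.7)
The plan is to combine the two hypotheses directly: regularity of $L_H$ will force $H$-quasi-convexity via Lemma \ref{lem:regular_languages_of_representative_implies_quasi-convex}, while the Morse hypothesis handles the Morse-ness condition in Definition \ref{def:stable_subgroup}.

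First, I would pass to a pruned finite state automaton $\mc{G}$ accepting $L_H$ (using the pruning remark in Section \ref{sec:background}), letting $k$ denote its number of vertices. Applying Lemma \ref{lem:regular_languages_of_representative_implies_quasi-convex} to $\mc{G}$, every path in $\cay(G,A)$ labeled by a word $w \in L_H$ is contained in the $k$-neighborhood of $H$.

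Next, I would use the crucial fact that $L_H$, as defined in Definition \ref{defn:L_H}, consists of \emph{all} geodesic words in $A^\star$ representing elements of $H$. Consequently, for any $h \in H$ and any geodesic $\gamma$ in $\cay(G,A)$ from $e$ to $h$, the labeling word of $\gamma$ lies in $L_H$, so $\gamma$ is $M$-Morse (by hypothesis) and is contained in the $k$-neighborhood of $H$ (by the previous paragraph). This is exactly the definition of $(M,k)$-stability of $H$ with respect to the generating set $A$.

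Finally, to conclude that $H$ is a stable subgroup in the sense of Definition \ref{def:stable_subgroup}, I would invoke the standard quasi-isometry invariance of stability: the identity map between Cayley graphs for two finite generating sets is a $(K,C)$-quasi-isometry, and conjugating by it converts $(M,k)$-stability for $A$ into $(M',k')$-stability for any other finite generating set $A'$, with new constants depending only on $M$, $k$, $K$, and $C$. There is no real obstacle in this proof; the only point requiring minor care is recording the quasi-isometry invariance upgrade, which is a well-established property of stable subgroups.
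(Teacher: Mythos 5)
Your proof is correct and follows essentially the same route as the paper: apply Lemma \ref{lem:regular_languages_of_representative_implies_quasi-convex} to get that every geodesic from $e$ to an element of $H$ lies in a $k$-neighborhood of $H$, and combine with the Morse hypothesis to conclude $(M,k)$-stability in $\cay(G,A)$. The only difference is that you make explicit the (standard, quasi-isometry invariance) upgrade from $(M,k)$-stability for the given generating set to Definition \ref{def:stable_subgroup}'s requirement for all generating sets, a step the paper leaves implicit.
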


\begin{proof}
By Lemma \ref{lem:regular_languages_of_representative_implies_quasi-convex}, if $L_H$ is regular, then there exists $k \geq 0$ so that  for every $h \in H$,  any geodesic from $e$ to $h$ in $\cay(G,A)$ is contained in the $k$-neighborhood of $H$ in $\cay(G,A)$. Thus, if there exists a Morse gauge $M$ so that each word in $L_H$ is also $M$-Morse, then $H$ is $(M,k)$-stable.
\end{proof}

For the necessary condition, we also prove a general result, this time about the regularity of languages whose corresponding paths in the Cayley graph stay uniformly close to a subgroup.

\begin{lemma}\label{lem:quasiconvex_subgroups_regular_language}
Let $G$ be a finitely generated group with a finite generating set $A.$ For any subgroup $H$ of $G$ and $k\geq 0$, the following languages are regular.
\begin{enumerate}
    \item The language consisting of all words $ w\in A^\star$ so that all the vertices of the path in  $\cay(G,A)$ starting at $e$ and labeled by $w$ are within the $k$-neighborhood of $H$. \label{item:close_paths_are_regular}
    \item The language $L_{H, k}$ of all words $ w\in A^\star$ so that $\overline{w} \in H$ and all the vertices of the path in $\cay(G,A)$ starting at $e$ and labeled by $w$ are within the $k$-neighborhood of $H$. \label{item:close_representatives_of_H_are_regular}
\end{enumerate}
\end{lemma}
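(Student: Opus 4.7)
The plan is to build a finite state automaton directly, exploiting the fact that membership of a group element in the $k$-neighborhood of $H$ depends only on its right coset modulo $H$. Concretely, if $Hg_1 = Hg_2$, then $g_1 = hg_2$ for some $h \in H$, so left multiplication by $h^{-1}$ is an isometry carrying $g_1 \mapsto g_2$ and fixing $H$ setwise; hence $g_1 \in N_k(H) \iff g_2 \in N_k(H)$. Moreover, $g \in N_k(H)$ iff there is $h \in H$ with $h^{-1}g \in B(e,k)$, i.e.\ iff $Hg \cap B(e,k) \neq \emptyset$. Since $A$ is finite, $B(e,k)$ is a finite subset of $G$, so the collection
\[
\mathcal{C} := \{\, Hb : b \in B(e,k) \,\}
\]
is a \emph{finite} set of right cosets — precisely the cosets that intersect $N_k(H)$.

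Next, I would construct a finite state automaton $\mathcal{G}$ with state set $\mathcal{C} \cup \{\star\}$, where $\star$ is an absorbing ``dead'' state. The initial state is the coset $He = H$, which lies in $\mathcal{C}$ because $e \in B(e,k)$. The transition on a letter $a \in A$ sends a state $Hg \in \mathcal{C}$ to $Hga$ if $Hga \in \mathcal{C}$, and to $\star$ otherwise; every letter loops $\star$ back to $\star$. This transition is well defined on cosets because right multiplication by $a$ respects the equivalence $Hg_1 = Hg_2$. For part (\ref{item:close_paths_are_regular}), declare every state in $\mathcal{C}$ accepting. For part (\ref{item:close_representatives_of_H_are_regular}), declare only the state $H$ accepting.

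To verify correctness, observe that after reading a prefix $a_1\cdots a_i$ of a word $w$ from the initial state, the automaton sits in the state $H\overline{a_1\cdots a_i}$ provided this coset lies in $\mathcal{C}$, and otherwise it has already entered $\star$ and can never return to $\mathcal{C}$. Hence $\mathcal{G}$ keeps its state inside $\mathcal{C}$ for the whole reading of $w$ if and only if every vertex $\overline{a_1\cdots a_i}$ on the path labeled by $w$ lies in $N_k(H)$, which proves (\ref{item:close_paths_are_regular}). For (\ref{item:close_representatives_of_H_are_regular}), acceptance additionally demands that the final state be $H$, equivalently $\overline{w} \in H$, giving exactly the language $L_{H,k}$.

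I do not expect any serious obstacle: the only delicate points are the well-definedness of the transition map on cosets and the finiteness of $\mathcal{C}$, both of which are immediate from the finiteness of $B(e,k)$ in $\cay(G,A)$ and the elementary fact that right multiplication preserves left cosets. In particular, the argument uses neither Morse geometry nor the Morse local-to-global property, and works verbatim for any subgroup $H$ of any finitely generated group $G$.
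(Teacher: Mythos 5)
Your proof is correct and takes essentially the same approach as the paper: both arguments exploit the finiteness of $B(e,k)$ to track the position of the path relative to $H$ with finitely many states, accepting at every live state for part (\ref{item:close_paths_are_regular}) and only at the state corresponding to being in $H$ for part (\ref{item:close_representatives_of_H_are_regular}). The only difference is cosmetic: the paper's states are elements $g\in B(e,k)$ with (nondeterministic) edges labeled $a$ from $g$ to $g'$ whenever $g\overline{a}(g')^{-1}\in H$, whereas your states are the corresponding right cosets $Hb$ together with a dead state, which is just the determinized quotient of the same machine.
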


\begin{proof}
Before defining the finite state automata that will read our languages, note the following observations. If $g$ is a vertex of $\cay(G,A)$ that is contained in the $k$-neighborhood of $H$, then there exist $h \in H$ so that $|g^{-1}h| \leq k$; see Figure \ref{fig:States are heights from H}. This suggests the vertices for both of our automata should be the set of all elements of $G$ contained in $B(e,k)$. Further, if for some $a \in A$, $g\overline{a}$ is also contained in the $k$-neighborhood of $H$, then there must exists $g' \in B(e,k)$ so that $g\overline{a}(g')^{-1} \in H$; see Figure \ref{fig:k_nieghborhood_of_H_edges}. This suggests that there should be an edge labeled by $a \in A$ in our automata from the vertex $g \in B(e,k)$ to the vertex $g' \in B(e,k)$ whenever $g \overline{a} (g')^{-1} \in H$.

\begin{figure}[ht]
    \centering

    \begin{tikzpicture}[scale=.3]
\draw[ thin ,orange] (-8,0) -- ++(18,0);

\draw[orange, thin] (-8,0) -- (2,6);

\draw[orange, thin] (10,0) -- (20,6);
\draw[ thin ,orange] (2,6) -- ++(18,0);

\draw[fill=black] (7,3) circle (0.2cm);

\node at (6,4) {$h$};

\draw[fill=black] (7,11) circle (0.2cm);

\draw[black, thick,-] (7,3)-- ++(0,8);

\node at (6,11.5) {$g$};
\node at (9,7) {$\leq k$};

\node[below, black] at (1,-.5) {$H$};

\end{tikzpicture}
\caption{Group element in the $k$-neighborhood of $H$ can be represented by an element $h \in H$ times an element in $B(e,k)$.}
\label{fig:States are heights from H}
\end{figure}
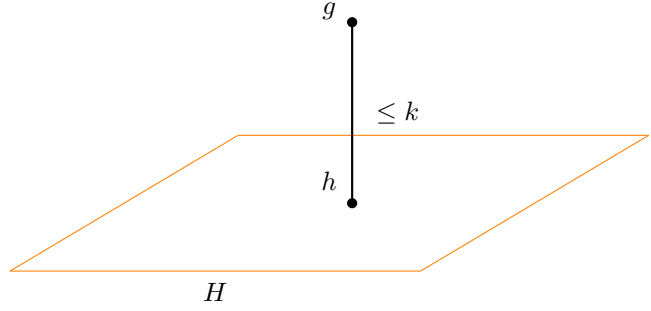

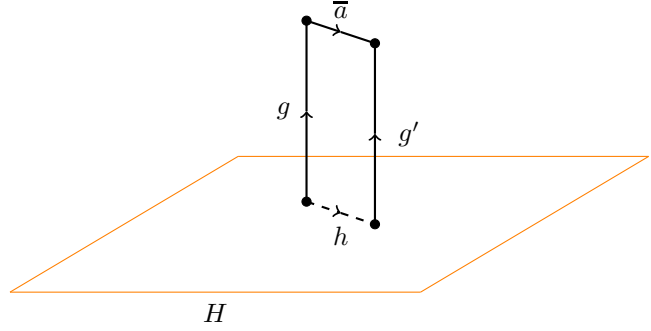
\begin{figure}[ht]
    \centering

    \begin{tikzpicture}[scale=.3]
\draw[ thin ,orange] (-8,0) -- ++(18,0);

\draw[orange, thin] (-8,0) -- (2,6);

\draw[orange, thin] (10,0) -- (20,6);
\draw[ thin ,orange] (2,6) -- ++(18,0);

\draw[fill=black] (5,4) circle (0.2cm);
\draw[fill=black] (5,12) circle (0.2cm);

\draw[fill=black] (8,3) circle (0.2cm);
\draw[fill=black] (8,11) circle (0.2cm);

\draw[black, thick, ->] (5,4)-- ++(0,4);
\draw[black, thick] (5,8) -- (5,12);
\draw[black, thick, ->] (8,3)-- ++(0,4);
\draw[black, thick] (8,7) -- ++(0,4);
\draw[black, thick, ->] (5,12) -- (6.5,11.5);
\draw[black, thick] (6.5,11.5) -- (8,11);

\draw[dashed, thick, ->] (5,4) -- (6.5,3.5);
\draw[dashed, thick] (6.5,3.5) -- (8,3);

\node at (4,8) {$g$};

\node at (6.5,12.5) {$\overline{a}$};
\node at (9.5,7) {$g'$};
\node at (6.5,2.5) {$h$};

\node[below, black] at (1,-.5) {$H$};

\end{tikzpicture}
\caption{If $g$ and $g\overline{a}$ are both in te $k$-neighborhood of $H$, then there exist $g' \in B(e,k)$ so that $g\overline{a}(g')^{-1}$ is an element $h \in H$.}
\label{fig:k_nieghborhood_of_H_edges}

\end{figure}

We now define the automata $\mc{G}_1$ and $\mc{G}_2$ in line with these observations. They will have the same set of vertices and edges and differ only in their set of accept states. Define $\mc{G}_1$ and $\mc{G}_2$ to be the following finite state automata with alphabet $A$.
\begin{itemize}
    \item The vertices for both $\mc{G}_1$ and $\mc{G}_2$ are the elements of $B(e,k)$.
    \item For both $\mc{G}_1$ and $\mc{G}_2$, there is an edge labeled by $a \in A$  from $g$ to $g'$ if $g \overline{a} (g')^{-1} \in H$. Since $A$ and $B(e,k)$ both contain only finitely many elements, there are only  a finite number of edges in both $\mc{G}_1$ and $\mc{G}_2$.
    \item The initial state for both $\mc{G}_1$ and $\mc{G}_2$ is the vertex $e$.
    \item All vertices of $\mc{G}_1$ are accept states for  $\mc{G}_1$. The only accept state for $\mc{G}_2$ is the vertex $e$.
\end{itemize}
We verify that $L_{H,k}$ is the language accepted by $\mc{G}_2$. The proof that the language accepted by $\mc{G}_1$ is the language described in (\ref{item:close_paths_are_regular}) is very similar.

Let $a_1\cdots a_n$ be the letters of a word $w \in L_{H,k}$. We verify that $w$ is read by a path in $\mc{G}_2$ that starts and ends at $e$. Let $g_0 =e$ and for each $i \in\{1,\dots,n\}$, let $g_i = \overline{a_1\cdots a_i} \in G$. Note, $g_n \in H$ because $a_1\cdots a_n = w \in L_{H,k}$.  Since each $g_i$ is contained in the $k$-neighborhood of $H$,  there exists $h_i \in H$ so that $h_i^{-1} g_i \in B(e,k)$ for each $i \in \{0,\dots, n\}$. Further, since $g_0$ and $g_n$ are elements of $H$, we choose $ h_0 = g_0$ and $h_n = g_n$. Let $s_i = h_i^{-1} g_i$. This ensures that each $s_i$ is a vertex of $\mc{G}_2$ and $s_n = e$. By  construction, $g_i \overline{a_{i+1}} g_{i+1}^{-1} = e$, thus we have \[s_i \overline{a_{i+1}} s_{i+1}^{-1} = h_{i}^{-1} g_i \overline{a_{i+1}} g_{i+1}^{-1} h_{i+1}  = h_{i}^{-1} h_{i+1} \in H\] for each $i \in \{0,\dots,n-1\}$. This implies there is a directed path in $\mc{G}_2$ that reads the word $a_1\cdots a_n$ and has vertices $e=s_0,\dots,s_n=e$. Hence $w = a_1\cdots a_n$ is read by a path in $\mc{G}_2$ that starts and ends at $e$.

Now, assume $w$ is a word in $A^\star$ that is read by a directed path $\eta$ in $\mc{G}_2$ that starts and terminates at the vertex $e$. We verify that $w \in L_{H,k}$. Let $a_1\cdots a_n$ be the letters of $w$ or equivalently the labels of the edges of the path $\eta$. For each $i \in \{0,\dots n \}$, let $s_i$ be the vertex of $\eta$ so that there is an edge labeled by $a_{i+1}$ from $s_i$  to $s_{i+1}$. Note, $s_0 = s_n = e$.   By construction of $\mc{G}_2$, there exists $h_i \in H$ so that $s_i \overline{a_{i+1}} (s_{i+1})^{-1} =h_i$ for each $ i \in\{0,\dots, n-1\}$.  Now, $\overline{a_1\cdots a_n} = \overline{w} \in H$ since $h_0\cdots h_{n-1} \in H$ and
 \[\overline{a_1\cdots a_n} = s_0\overline{a}_1(s_1)^{-1} s_1 \overline{a}_2 (s_2)^{-1} \cdots s_{n-1}\overline{a}_n (s_{n})^{-1} = h_0\cdots h_{n-1}.\tag{$\ast$} \label{eq:regular_implies_quasi-convex}\]
 
Since we have shown $\overline{w} \in H$, to prove $w \in L_{H,k}$, it remains to prove that the distance from $H$ to $\overline{a_1\dots a_i}$ is bounded by $k$ for all $i \in\{1,\dots n-1\}$. 
Applying a similar argument as in (\ref{eq:regular_implies_quasi-convex}) we have \[\overline{a_1 \cdots a_i} = h_0 \cdots h_{i-1} s_i \overline{a}_i\] for each $ i \in\{1,\dots,n\}$. 
Since $h_0\cdots h_i \in H$, the distance in $\cay(G,A)$ from $H$ to $\overline{a_1\cdots a_i}$ is at most the distance from $H$ to $s_i \overline{a}_i$. 
However, $s_i \overline{a_i} =  h_{i+1} s_{i+1}$ by construction. Thus, the distance from $H$ to $s_i \overline{a}_i$, and hence from $H$ to $\overline{a_1 \cdots a_i}$, is at most $|s_{i+1}|\leq k$ for all $i \in \{1,\dots, n-1\}$.
\end{proof}

When $H$ is a stable subgroup, there is a Morse gauge $M$ and a constant $k \geq 0$ so that every geodesic word starting at $e$ and ending in $H$ is $M$-Morse and contained in the $k$-neighborhood of $H$. 
As $M$-Morse geodesics are locally $M$-Morse, the language $L_H$ consists precisely of the  geodesics that are locally $M$-Morse that start at $e$ and end in $H$. In other words, $L_H$ is the intersection of the languages $L_{H,k}$ from Lemma \ref{lem:quasiconvex_subgroups_regular_language} and $L_M$ from Definition \ref{defn:L_M}. 
When the ambient group $G$ is a Morse local-to-global group, both $L_{H,k}$ and $L_M$ are regular implying $L_H$ is also regular.

\begin{corollary} \label{cor:regular language bijecting with stable subgroup}
Let $G$ be a Morse local-to-global group with finite generating set $A$. If $H$ is $(M,k)$-stable in $\cay(G,A)$, then:
\begin{enumerate}
    \item The language $L_H$ is a regular language where each element is an $M$-Morse geodesic word in $\cay(G,A)$.
    \item There is a regular language $J_H$ that bijects with $H$ and where each element is an $M$-Morse geodesic word in $\cay(G,A)$. \label{item:bijecting_with_stable_subgroup}
\end{enumerate}
\end{corollary}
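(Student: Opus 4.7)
The plan is to realize both $L_H$ and $J_H$ as intersections of regular languages that have already been constructed, exploiting the fact that $(M,k)$-stability is precisely the conjunction of an $M$-Morse condition (captured by $L_M$) and a $k$-quasi-convexity condition (captured by $L_{H,k}$). Specifically, I will argue that $L_H = L_M \cap L_{H,k}$ and that a suitable short-lex version gives the bijecting language $J_H$.

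For part (1), I claim $L_H = L_M \cap L_{H,k}$. For the forward inclusion, let $w \in L_H$; then $w$ is a geodesic word with $\overline{w} \in H$, so by $(M,k)$-stability the geodesic in $\cay(G,A)$ labeled by $w$ is $M$-Morse. In particular it is $(B_M;M)$-locally Morse, giving $w \in L_M$, and it lies in the $k$-neighborhood of $H$, giving $w \in L_{H,k}$. For the reverse inclusion, any $w \in L_M$ is by definition a geodesic word, and membership of $w$ in $L_{H,k}$ forces $\overline{w} \in H$, so $w \in L_H$. Regularity of $L_H$ then follows from Theorem \ref{thm:regular_language_for_Morse_geodesics}, Lemma \ref{lem:quasiconvex_subgroups_regular_language}, and the closure of regular languages under intersection, and each word of $L_H$ is $M$-Morse directly from the definition of stability.

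For part (2), I propose setting $J_H := J_M \cap L_{H,k}$, where $J_M$ is the short-lex sublanguage of $L_M$ from Theorem \ref{thm:regular_language_for_Morse_geodesics}. Regularity is immediate. To verify that $J_H$ bijects with $H$, fix $h \in H$. Since $H$ is $(M,k)$-stable, every geodesic word for $h$ is $M$-Morse and therefore lies in $L_M$; conversely, every element of $L_M$ representing $h$ is in particular a geodesic word for $h$. Hence the unique $J_M$-representative of $h$ (the lex-least word of $L_M$ representing $h$) is precisely the lex-least geodesic word for $h$, and by stability it lies in $L_{H,k}$. So $h$ has at least one representative in $J_H$, and uniqueness across $J_H$ is inherited from $J_M$. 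Each word in $J_H$ is $M$-Morse since it is a geodesic word ending in $H$.

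No step here looks like a serious obstacle: the real work has been front-loaded into Theorem \ref{thm:regular_language_for_Morse_geodesics} (finiteness of Morse cone types) and Lemma \ref{lem:quasiconvex_subgroups_regular_language} (regularity of the $k$-close language), and the present argument is bookkeeping against the two defining features of $(M,k)$-stability. The one place where I want to be careful is the identification in part (2) of the short-lex word of $L_M$ representing $h$ with the short-lex geodesic word for $h$; this identification requires that \emph{every} geodesic representative of $h$ lies in $L_M$, which is why it is essential to feed the stability gauge $M$ (rather than some weaker local gauge) into the language $L_M$.
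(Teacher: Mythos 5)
Your proposal is correct and takes essentially the same route as the paper: both realize $L_H$ as $L_M \cap L_{H,k}$ and $J_H$ as $J_M \cap L_{H,k}$, use closure of regular languages under intersection, and derive the $M$-Morse property of the words from the $(M,k)$-stability of $H$ rather than from membership in $L_M$. The subtlety you flag at the end is precisely the point the paper also notes (elements of $L_M$ are only locally $M$-Morse), and your treatment of it is adequate.
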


\begin{proof}
Let $L_M$ be the language from Definition \ref{defn:L_M} and $L_{H,k}$ be the language from Lemma \ref{lem:quasiconvex_subgroups_regular_language}. As described in the paragraph before the corollary, every element of $L_H$ is an $M$-Morse geodesic word and $L_H$ is precisely $L_{H,k} \cap L_M$. The language $L_{H,k}$ is always regular (Lemma \ref{lem:quasiconvex_subgroups_regular_language}), while the language $L_M$ is regular because $G$ is a Morse local-to-global group (Corollary \ref{cor: Morse geodesic form regular langauge}). Since the intersection of two regular languages is regular by Lemma 1.4.1 of \cite{Word_Processing}, $L_H$ is also regular.

For the second claim, let $J_M$ be the sublanguage of $L_M$ provided by Corollary \ref{cor: lexographically least Morse geodesics}. As per Corollary \ref{cor: lexographically least Morse geodesics}, $J_M$ contains precisely one word for each element of $G$ represented by a word in $L_M$. Therefore $J_H = J_M \cap L_{H,k}$ is a regular  language that bijects with $H$ and where each element is an $M$-Morse geodesic word in $\cay(G,A)$.  Note, since the elements of $L_M$ are only locally $M$-Morse, the fact that the elements of $L_H$ or $J_H$ are $M$-Morse words is because $H$ is stable and not because $L_H,J_H \subseteq L_M$.
\end{proof}

Since Corollary \ref{cor:regular language bijecting with stable subgroup} produces a regular language that bijects with a stable subgroup, we can immediately conclude that stable subgroups of Morse local-to-global groups have rational growth.

\begin{corollary}
Let $H$ be a stable subgroup of a Morse local-to-global group $G$. For any finite generating set $A$ of $G$, if $f_{H,A}(n)=|B(e,n) \cap H|$, then there exist polynomials $P(x), Q(x) \in \mathbb{Q}[x]$ so that 
\[\sum_{n=0}^\infty f_{H,A}(n) \cdot x^n = \frac{P(x)}{Q(x)}. \]
\end{corollary}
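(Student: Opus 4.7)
The plan is to bridge this corollary to Theorem \ref{thm: rational growth of languages}, which states that regular languages always have rational generating functions. The key point is that although $f_{H,A}(n)$ is \emph{a priori} a counting function on the subgroup $H$ (not on any language), we can realize it as the growth function of a regular language via the bijecting language provided by Corollary \ref{cor:regular language bijecting with stable subgroup}(\ref{item:bijecting_with_stable_subgroup}).

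First I would apply Corollary \ref{cor:regular language bijecting with stable subgroup}(\ref{item:bijecting_with_stable_subgroup}) to produce a regular language $J_H$ with alphabet $A$ such that the map $w \mapsto \overline{w}$ is a bijection $J_H \to H$ and every $w \in J_H$ is a geodesic word in $\cay(G,A)$. The bijection, combined with the fact that each $w \in J_H$ is a geodesic word, gives
\[
\ell(w) = |\overline{w}| \qquad \text{for every } w \in J_H,
\]
so that for every $n \in \mathbb{N}$,
\[
f_{J_H}(n) = |\{w \in J_H : \ell(w) \leq n\}| = |\{h \in H : |h| \leq n\}| = f_{H,A}(n).
\]

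Next, since $J_H$ is regular, Theorem \ref{thm: rational growth of languages} yields polynomials $P(x), Q(x) \in \mathbb{Q}[x]$ with
\[
\sum_{n=0}^\infty f_{J_H}(n)\, x^n = \frac{P(x)}{Q(x)}.
\]
Substituting $f_{H,A}(n) = f_{J_H}(n)$ gives the desired rational generating function for $f_{H,A}$. This proof is essentially immediate from the two cited results; the substantive work already occurred in constructing $J_H$, so there is no real obstacle to overcome here.
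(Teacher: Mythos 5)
Your proposal is correct and is essentially identical to the paper's own proof: both invoke the regular language $J_H$ from Corollary \ref{cor:regular language bijecting with stable subgroup}(\ref{item:bijecting_with_stable_subgroup}), note that its growth function coincides with $f_{H,A}$ because its elements are geodesic words bijecting with $H$, and then apply Theorem \ref{thm: rational growth of languages}. No differences worth noting.
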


\begin{proof}
Let $J_H$ be the regular language that bijects with $H$ as in part (\ref{item:bijecting_with_stable_subgroup}) of Corollary \ref{cor:regular language bijecting with stable subgroup}. Since $J_H$ consists of geodesic words, the number of words in $J_H$ with word  length $n$ is precisely $|B(e,n) \cap H| = f_{H,A}(n)$. Since $J_H$ is regular, Theorem \ref{thm: rational growth of languages} implies the desired statement.
\end{proof}

\subsection{The need for stability}\label{sec:need for stability}

Stable subgroups are known to be characterized as subgroups that are both hyperbolic and satisfy a similar convexity property as Morse quasi-geodesics. 

\begin{theorem}[{\cite[Theorem 4.8]{Tran_SQC}\cite[Lemma 3.8]{CordesDurham2017}\cite[Lemma 3.3]{DurhamTaylor2015}}]\label{thm:stable iff hyperbolic and Morse}
Let $G$ be a group with finite generating set $A$. A subgroup $H$ is stable if  and only if 
\begin{itemize}
    \item (Hyperbolicity) $H$ is a hyperbolic group;
    \item (Morseness) there is a Morse gauge $M$ so that every $(k,c)$-quasi-geodesic in $\cay(G,A)$ with endpoints in $H$ is contained in the $M(k,c)$-neighborhood of $H$ in $\cay(G,A)$.
\end{itemize}
\end{theorem}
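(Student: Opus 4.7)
The plan is to prove each direction separately, with the forward direction nearly immediate from the definition and the backward direction requiring a projection argument into $H$ combined with stability of quasi-geodesics in hyperbolic spaces.

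For the forward direction, assume $H$ is $(M_0, k_0)$-stable. The Morseness condition is the easy part: given a $(k,c)$-quasi-geodesic $\alpha$ with endpoints $h_1, h_2 \in H$, let $\gamma$ be any $\cay(G,A)$-geodesic from $h_1$ to $h_2$. Stability gives that $\gamma$ is $M_0$-Morse, so $\alpha$ lies in the $M_0(k,c)$-neighborhood of $\gamma$, and stability also gives that $\gamma$ lies in the $k_0$-neighborhood of $H$. Setting $M(k,c) := M_0(k,c) + k_0$ proves Morseness. For hyperbolicity, first observe that $H$ is finitely generated: since every $\cay(G,A)$-geodesic $[e,h]$ stays within $k_0$ of $H$, one can decompose it into consecutive segments of length at most $2k_0+1$ whose endpoints lie in $H$, so $H$ is generated by $S := B_G(e, 2k_0+1) \cap H$. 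This same decomposition shows $(H, d_S) \hookrightarrow (G, d_A)$ is a quasi-isometric embedding. Since $\cay(G,A)$-geodesics between points of $H$ are uniformly $M_0$-Morse and images of $H$-geodesics are uniform quasi-geodesics in $\cay(G,A)$, the fellow-traveling property of Morse geodesics (Lemma \ref{lem: fellow traveling}) gives uniformly thin triangles in $(H, d_S)$, so $H$ is hyperbolic.

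For the backward direction, assume $H$ is hyperbolic (hence finitely generated by some $S$) and that the Morse condition holds with gauge $M$. Applying Morseness to $(1,0)$-quasi-geodesics shows each $\cay(G,A)$-geodesic between points of $H$ lies within $k_0 := M(1,0)$ of $H$, which gives the quasi-convexity half of stability. A routine argument using this $k_0$-closeness together with finiteness of $B_G(e, 2k_0+1) \cap H$ shows $H \hookrightarrow G$ is a quasi-isometric embedding.

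The substantive point is producing a uniform Morse gauge $M_0$ for $\cay(G,A)$-geodesics between points of $H$. Let $\gamma$ be such a geodesic from $h_1$ to $h_2$ and let $\alpha$ be a $(k,c)$-quasi-geodesic with endpoints $p, q$ on $\gamma$. The plan is to project to $H$, use hyperbolicity, then lift back. By the Morse hypothesis, $\alpha$ stays within $M(k,c)$ of $H$ and $\gamma$ stays within $k_0$ of $H$. Fix closest-point maps into $H$; since $H$ is quasi-isometrically embedded, the images $\tilde\alpha, \tilde\gamma \subset H$ are uniform quasi-geodesics in $(H, d_S)$ with constants determined by $k, c, M(k,c), k_0$ and the quasi-isometric embedding constants. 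The endpoints of $\tilde\alpha$ are uniformly close (in $d_S$) to points on $\tilde\gamma$, so by stability of quasi-geodesics in the $\delta$-hyperbolic space $(H, d_S)$, the trace $\tilde\alpha$ stays within a uniform $H$-distance of the subsegment of $\tilde\gamma$ between these projections. Translating back via the quasi-isometric embedding, $\alpha$ lies within a bounded $\cay(G,A)$-neighborhood of $\gamma|_{[p,q]}$, and the bound depends only on $k, c, M, \delta$, giving the desired Morse gauge $M_0$. The main obstacle is purely bookkeeping: tracking how the projection and lifting inflate constants, and verifying that the closest-point maps indeed produce quasi-geodesics in $H$ (which uses the Morse/quasi-convexity conditions to ensure the projections do not backtrack too far). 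Once this is set up, stability of quasi-geodesics in hyperbolic spaces finishes the argument.
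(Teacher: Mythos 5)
The paper does not prove this statement at all---it is imported as a known result from Tran, Cordes--Durham, and Durham--Taylor---so there is no internal proof to compare against; your argument is essentially the standard one found in those references (quasi-convexity gives finite generation and an undistorted embedding of $H$, Morse geodesic triangles give hyperbolicity, and conversely the Morse lemma in the hyperbolic group $H$ is pulled back through the quasi-isometric embedding to produce a uniform Morse gauge), and it is sound. Two small points deserve care. First, in the backward direction you write that the Morse hypothesis makes $\alpha$ stay within $M(k,c)$ of $H$; as stated this is not licensed, since the endpoints $p,q$ of $\alpha$ lie on $\gamma$ and are only within $k_0=M(1,0)$ of $H$, not in $H$. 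You must first extend $\alpha$ by segments of length at most $k_0$ to points of $H$, note the extension is a $(k,c')$-quasi-geodesic with $c'$ depending on $c,k,k_0$, and then apply the hypothesis with gauge $M(k,c')$; this is exactly the sort of constant inflation you flag as bookkeeping, but it should be done explicitly because the hypothesis literally requires endpoints in $H$. Second, your worry about closest-point projections ``backtracking'' is a non-issue: any assignment of points of $H$ within distance $D$ of the points of $\alpha$ yields a $(k,c+2D)$-quasi-geodesic in $\cay(G,A)$ with values in $H$, which is then a uniform quasi-geodesic in $(H,d_S)$ by the quasi-isometric embedding, so no geometry of the projection is needed. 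In the forward direction, the claim that Morse sides force uniformly slim triangles (and hence hyperbolicity of $H$) is correct but is the one place where your sketch leans on an unproved standard fact; it can be justified, e.g., via the nearest-point $(3,0)$-quasi-geodesic trick together with Lemma~\ref{lem: fellow traveling}, or by citing the characterization of hyperbolicity by a uniform Morse gauge for all geodesics.
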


In light of this, one might wonder if hyperbolicity or Morseness alone is sufficient for the conclusion of Theorem \ref{thm:regular_language_characterizing_stable_subgroups}. We now give two specific examples of subgroups that show that the answer is no; Theorem \ref{thm:regular_language_characterizing_stable_subgroups} fails to hold under the assumption of hyperbolicity or Morseness alone.

\begin{example} [Morse is not sufficient] Let $H$ be the group $\mathbb{Z}^2\rtimes\mathbb{Z}_2$ with the following presentations
\begin{center}

$H= \langle x,y,z, d, \alpha  \mid    \alpha ^2,xyx^{-1}y^{-1},\alpha x \alpha y^{-1}, xyd^{-1},zx^{-2}\rangle$.
\end{center}
 Example 4.4.2 in \cite{Word_Processing} shows that the language of all geodesic words in $H$ is not a regular language. Now, let $G=H \ast \mathbb{Z}$, given with the presentation 

\begin{center}
    $G= \langle x,y,z,w, d, \alpha  \mid    \alpha ^2,xyx^{-1}y^{-1},\alpha x \alpha y^{-1}, xyd^{-1},zx^{-2}\rangle$.
\end{center}

Since $H$ is a $\CAT(0)$ group and  $G$ is hyperbolic relative to $H$, we have that $G$ is a Morse local-to-global group by Theorem \ref{thm:Morse local-to-global groups} ($G$ is also a $\CAT(0)$ group itself). Further, as a peripheral subgroup of a relatively hyperbolic group, $H$ satisfies the Morseness condition of Theorem \ref{thm:stable iff hyperbolic and Morse}; see  \cite[Lemma 4.15]{DrutuSapir_Rel_hyp}. However, the language of geodesic words in $\cay(G,\{x,y,z,w,d,\alpha\})$ representing elements of $H$ is not regular since the language of geodesic words in $\cay(H,\{x,y,z,w,d\})$ is not regular.
\end{example}

\begin{example}[Hyperbolicity is not sufficient] \label{Hyperbolicity alone is not sufficient}

For this example, we will use the right-angled Artin group 
\begin{center}
    $G= \mathbb{Z}^2 \ast \mathbb{Z} =\langle a,b,c \mid [a,b]\rangle$,
\end{center}
which is Morse local-to-global by virtue of being $\CAT(0)$. Let $H$ be the subgroup $\langle ab \rangle$. For each $n \in \mathbb{N}$, the word $a^nb^n$ is an element of $L_H$. However, the path in $\cay(G,\{a,b,c\})$ starting at $e$ and labeled by $a^nb^n$ contains vertices, namely $a^n$, that are $n$-far from $H$. Hence, by Lemma \ref{lem:regular_languages_of_representative_implies_quasi-convex}, $L_H$ cannot be regular. 

Another source of counter-examples arises from distorted subgroups of hyperbolic groups. In a hyperbolic group, a subgroups is stable precisely when it is undistorted. Further, for any finite generating set, the language of geodesic words representing elements of a subgroup $H$ is regular if and only if $H$ is undistorted \cite[Theorem 2.2]{GerSho1991}. Thus, if $H$ and $G$ are both hyperbolic groups, but $H$ is a distorted subgroup of $G$, then the language $L_H$ is not regular despite $H$ being hyperbolic. An example where this occurs is a hyperbolic surface-by-cyclic group $G = \pi_1(S) \rtimes \mathbb{Z}$. The fiber subgroup $H = \pi_1(S)$ is both hyperbolic and distorted in $G$.
\end{example}

\section{The growth rate of a stable subgroup}\label{sec:growth_rate_gap}
We now apply Corollary \ref{cor:regular language bijecting with stable subgroup} to prove that, under certain hypotheses, the growth rate of a stable subgroup of a Morse local-to-global group is strictly less than the growth rate of the entire group. This proves Theorems  \ref{intro_thm:growth_torsion_free},  \ref{intro_thm:growth_res_finite}, and \ref{intro_cor:MCG_case_of_Growth} from the introduction.

\begin{theorem}\label{thm:definite_gap_between_growths}
Let $G$ be a Morse local-to-global group and let $H$ be an infinite index stable subgroup of $G$. If either $G$ is virtually torsion-free or $H$ is residually finite, then for any finite generating set $A$ of $G$, we have $\lambda_{H,A}< \lambda_{G,A}.$
\end{theorem}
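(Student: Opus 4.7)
The plan is to follow the strategy of Dahmani--Futer--Wise, replacing their use of Cannon's regular language of geodesic words in a hyperbolic group with the regular languages produced by Theorem \ref{intro_thm:stable_subgroups}. The first step is to use the hypothesis on $G$ or $H$ to locate a finite index subgroup $H' \leq H$ and an element $g \in G \setminus H$ so that $\langle H', g \rangle \cong H' \ast \langle g \rangle$. When $G$ is virtually torsion-free, one passes to a torsion-free finite index subgroup $G_0 \leq G$, sets $H' = H \cap G_0$, and invokes the combination theorems for stable subgroups from \cite{Morse-local-to-global} to find an infinite order $g \in G_0$ that ping-pongs with $H'$. When $H$ is residually finite, one selects a finite index normal subgroup of $H$ avoiding a carefully chosen finite set, and again uses the Morse local-to-global combination theorems to produce $g$. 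In both cases, $H'$ is a finite index subgroup of $H$, so it is itself stable and $\lambda_{H',A} = \lambda_{H,A}$.

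Next I would apply part \eqref{item:bijecting_with_stable_subgroup} of Corollary \ref{cor:regular language bijecting with stable subgroup} to obtain a regular geodesic language $J_{H'}$ that bijects with $H'$ and consists of uniformly $M$-Morse geodesic words. Copying the Dahmani--Futer--Wise construction, I would then build a new regular language $J^{+g}$ by attaching, at each accept state of a pruned automaton $\mc{G}_{H'}$ for $J_{H'}$, a loop that reads a fixed geodesic word for $g$. Because $\langle H', g \rangle \cong H' \ast \langle g \rangle$, the resulting language $J^{+g}$ is a geodesic language whose elements biject with the subgroup $\langle H', g \rangle$; the free product structure is what ensures that distinct readings in $J^{+g}$ produce distinct group elements and that the concatenations remain geodesic (up to choosing $g$ long enough, which the combination theorems allow).

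The heart of the argument is then a Perron--Frobenius comparison. Let $\mc{G}^{+g}$ denote the pruned automaton for $J^{+g}$; it contains $\mc{G}_{H'}$ as a proper subgraph, and the added $g$-loops make $\mc{G}^{+g}$ strongly connected in the sense required by Lemma \ref{lem:proper subgraphs have smaller eigenvalue}. That lemma yields $\rho_{\mc{G}_{H'}} < \rho_{\mc{G}^{+g}}$, and Corollary \ref{Sequence of equalities} translates these eigenvalues into growth rates:
\[
\lambda_{H,A} \;=\; \lambda_{H',A} \;=\; \rho_{\mc{G}_{H'}} \;<\; \rho_{\mc{G}^{+g}} \;=\; \lambda_{\langle H',g\rangle, A} \;\leq\; \lambda_{G,A}.
\]

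The main obstacle is the first step: arranging the free product $\langle H', g \rangle \cong H' \ast \langle g \rangle$ inside $G$. In the hyperbolic case, Dahmani--Futer--Wise use ping-pong on the Gromov boundary to do this for any infinite index quasi-convex subgroup, but as noted in Subsection \ref{subsec:outline} the Morse boundary of a Morse local-to-global group is generally not compact, so a direct ping-pong argument is unavailable. This is precisely why the theorem needs either the virtual torsion-freeness of $G$ or the residual finiteness of $H$: these extra hypotheses supply enough room to apply the Morse local-to-global combination theorems of Russell--Spriano--Tran and produce the required element $g$. Once this element is in hand, the remainder of the proof is a formal consequence of Corollary \ref{cor:regular language bijecting with stable subgroup}, the standard automaton-surgery argument, and the Perron--Frobenius machinery already recorded in Section \ref{sec:background}.
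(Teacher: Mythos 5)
This is correct and essentially the paper's own argument: you reduce to finding a finite-index $H'\leq H$ and $g\neq e$ with $\langle H',g\rangle\cong H'\ast\langle g\rangle$ via the Russell--Spriano--Tran combination theorems (Theorems \ref{thm: free factor} and \ref{thm:res_finit_combination}, which is exactly where virtual torsion-freeness or residual finiteness enters, the latter together with finite width of stable subgroups to get a Morse element meeting $H$ trivially), and then run the Dahmani--Futer--Wise automaton extension on the language from Corollary \ref{cor:regular language bijecting with stable subgroup}, concluding with Lemma \ref{lem:proper subgraphs have smaller eigenvalue} and Corollary \ref{Sequence of equalities} just as in Proposition \ref{prop:growth_gap_technical}. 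The only quibble is your parenthetical claim that the concatenated words in $J^{+g}$ remain geodesic: this is neither supplied by the combination theorems nor needed, since injectivity of $u\mapsto\overline{u}$ on $J^{+g}$ already yields $\lambda_{J^{+g}}\leq\lambda_{G,A}$, which is all the final chain of inequalities uses.
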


Both cases of Theorem \ref{thm:definite_gap_between_growths} reduce to the following technical result, the proof of which closely follows Dahmani, Futer, and Wise's proof in the case of quasi-convex subgroups of torsion-free hyperbolic groups \cite[Theorem 5.1]{DahmaniFuterWise2018}.

\begin{proposition}\label{prop:growth_gap_technical}
Let $G$ be a Morse local-to-global group and let $H$ be an non-trivial infinite index stable subgroup of $G$.  If there exists $H' \leq H$ and $g \in G - \{e\}$ so that $H'$ is finite index in $H$ and $\langle H' , g \rangle \cong H' \ast \langle g\rangle$, then for any finite generating set $A$ of $G$, we have $\lambda_{H,A} < \lambda_{G,A}$.
\end{proposition}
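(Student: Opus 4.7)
The plan is to follow the strategy of Dahmani, Futer, and Wise for quasi-convex subgroups of hyperbolic groups, using our automatic structure for stable subgroups (Corollary \ref{cor:regular language bijecting with stable subgroup}) in place of Cannon's automatic structure for the ambient hyperbolic group. Since $H'$ has finite index in $H$, one has $\lambda_{H,A} = \lambda_{H',A}$, so it suffices to prove $\lambda_{H',A} < \lambda_{G,A}$. As a finite-index subgroup of the stable subgroup $H$, the subgroup $H'$ is itself stable, so Corollary \ref{cor:regular language bijecting with stable subgroup} supplies a regular geodesic language $J_{H'}$ that bijects with $H'$. Let $\mathcal{G}_{H'}$ be a pruned finite state automaton accepting $J_{H'}$; then Corollary \ref{Sequence of equalities} yields $\lambda_{H',A} = \lambda_{J_{H'}} = \rho_{\mathcal{G}_{H'}}$.

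Next, exploit the free product structure $\langle H', g \rangle \cong H' \ast \langle g \rangle$. Fix a geodesic word $\gamma \in A^\star$ representing $g$ and define $J^{+g}$ to consist of all words of the form
\[
u_0\, \gamma^{\epsilon_1}\, u_1\, \gamma^{\epsilon_2}\, \cdots\, \gamma^{\epsilon_k}\, u_k,
\]
with each $u_i \in J_{H'}$, each $\epsilon_i \in \mathbb{Z} \setminus \{0\}$, and the usual free product normal-form condition that each interior $u_i$ represent a nontrivial element of $H'$. Uniqueness of the free product normal form combined with uniqueness of representatives in $J_{H'}$ (from Corollary \ref{cor:regular language bijecting with stable subgroup}) shows that $J^{+g}$ bijects with $\langle H', g \rangle$ under the evaluation map $w \mapsto \overline{w}$. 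Build a finite state automaton $\mathcal{G}^{+g}$ accepting $J^{+g}$ by augmenting a suitably modified copy of $\mathcal{G}_{H'}$ with edges spelling out powers of $\gamma$ and $\gamma^{-1}$ that return to the initial state of a fresh copy of $\mathcal{G}_{H'}$. By design, $\mathcal{G}_{H'}$ embeds as a proper subgraph of $\mathcal{G}^{+g}$, the extra edges coming from the insertion of $g^{\pm 1}$.

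Since distinct words in $J^{+g}$ represent distinct elements of $\langle H', g\rangle \leq G$, and the free-monoid length of any $w \in J^{+g}$ dominates the geodesic length of $\overline{w}$ in $\cay(G,A)$, one obtains
\[
\lambda_{G,A} \;\geq\; \lambda_{\langle H', g\rangle,A} \;\geq\; \lambda_{J^{+g}} \;=\; \rho_{\mathcal{G}^{+g}},
\]
the last equality by Theorem \ref{thm:growth rate is perron-frobienius}. The remaining, and main technical, step is to invoke Lemma \ref{lem:proper subgraphs have smaller eigenvalue} to deduce the strict inequality $\rho_{\mathcal{G}_{H'}} < \rho_{\mathcal{G}^{+g}}$. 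This requires $\mathcal{G}^{+g}$ to be strongly connected, meaning every pair of vertices is joined by a directed path in each direction; verifying this is the hard part. The classical trick, following Dahmani--Futer--Wise, is first to pass to a pruned and ``trimmed'' form of $\mathcal{G}_{H'}$ and then to identify or re-route at the accept states so that the $\gamma^{\pm 1}$-loops place every state on a directed cycle through the initial state. Once strong connectivity is established, Lemma \ref{lem:proper subgraphs have smaller eigenvalue} delivers $\rho_{\mathcal{G}_{H'}} < \rho_{\mathcal{G}^{+g}}$, and chaining the inequalities above gives
\[
\lambda_{H,A} = \lambda_{H',A} = \rho_{\mathcal{G}_{H'}} < \rho_{\mathcal{G}^{+g}} \leq \lambda_{G,A},
\]
as required.
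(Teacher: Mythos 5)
Your plan follows the same Dahmani--Futer--Wise route as the paper: reduce to the finite-index stable subgroup $H'$, take a pruned automaton $\mathcal{G}$ accepting $J_{H'}$ from Corollary \ref{cor:regular language bijecting with stable subgroup} so that $\lambda_{H',A}=\rho_{\mathcal{G}}$ by Corollary \ref{Sequence of equalities}, enlarge to a regular language for $\langle H',g\rangle$ using the free-product hypothesis, and finish with Theorem \ref{thm:growth rate is perron-frobienius} and Lemma \ref{lem:proper subgraphs have smaller eigenvalue}. The closing chain of inequalities, including your (correct) observation that injectivity of evaluation together with $\ell(w)\geq|\overline{w}|$ yields $\lambda_{J^{+g}}\leq\lambda_{G,A}$, and the finite-index reduction $\lambda_{H,A}=\lambda_{H',A}$ (proved in the paper by a short coset-counting claim), are all in order.

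The gap is in the middle: the central object, the automaton $\mathcal{G}^{+g}$, is never actually constructed, and your choice of $J^{+g}$ --- full free-product normal forms with arbitrary powers $\gamma^{\epsilon_i}$, $\epsilon_i\in\mathbb{Z}\setminus\{0\}$, and nontrivial interior $u_i$ --- makes that construction the real work, which is dismissed with ``suitably modified copy'' and ``identify or re-route.'' A finite automaton accepting exactly this language must enforce sign-coherence of each $\gamma$-block and nontriviality of the interior $u_i$; the edge-augmentation you sketch gives no mechanism for this (note the $J_{H'}$-representative of $e$ is the empty word, so the relevant initial state is accepting), and without it the accepted language contains words such as $u\,\gamma\,\gamma^{-1}\,u'$ whose images collide with elements already represented in $J_{H'}$. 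Then evaluation is no longer injective, so the justification for $\lambda_{J^{+g}}\leq\lambda_{\langle H',g\rangle,A}$ breaks, and the Perron--Frobenius eigenvalue of your automaton computes the growth of this larger, non-injective language rather than of $J^{+g}$. Moreover the step you flag as ``the hard part,'' strong connectivity, is the easy part once the extension is chosen correctly: the paper enlarges much less, fixing a single geodesic word $w$ representing $g$, adding one $w$-labelled edge from every accept state of the pruned $\mathcal{G}$ back to the initial state $s_0$, and declaring $s_0$ an accept state. Only positive copies of $g$ are inserted, so injectivity of evaluation is immediate from $\langle H',g\rangle\cong H'\ast\langle g\rangle$; strong connectivity is immediate from prunedness (every state lies on a path from $s_0$ to an accept state, and every accept state now returns to $s_0$); and $\mathcal{G}$ sits inside as a proper subgraph, so Lemma \ref{lem:proper subgraphs have smaller eigenvalue} applies directly. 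Replacing your $J^{+g}$ with this simpler extension (negative powers and normal-form bookkeeping are unnecessary for the lower bound) closes the argument exactly as you outline.
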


\begin{proof}
Let $A$ be a finite generating set for the Morse local-to-global group $G$. Let $H$ be an infinite index stable subgroup of $G$ and assume there exists $H' \leq H$ and $g \in G -\{e\}$ so that $H'$ is finite index in $H$ and $\langle H' , g \rangle \cong H' \ast \langle g\rangle$.

Since $H'$ is finite index in $H$, $H'$ is also a stable subgroup of $G$. Thus, there exists a pruned finite state automaton $\mc{G}$ accepting the regular language $J=J_{H'}$ from Corollary \ref{cor:regular language bijecting with stable subgroup}.  Since  Corollary \ref{cor:regular language bijecting with stable subgroup} shows that $J$ bijects with $H'$, Corollary \ref{Sequence of equalities} implies that $\lambda_{H',A}=\rho_\mc{G}$.

Let $w$ be a geodesic word in $A^\star$ that represents the element $g$ for which $\langle  H' ,g \rangle \cong  H' \ast \langle g \rangle $. Let $s_0$ be the unique initial state of $\mc{G}$. 
From $\mc{G}$, we construct a new finite state automaton $\mc{G}'$ as follows:
\begin{itemize}
    \item The states of $\mc{G}'$ are precisely the states of $\mc{G}$. The initial state of $\mc{G}'$ is $s_0$, which is the initial state of $\mc{G}$.
    \item The accept states of $\mc{G}'$ are the accept states of $\mc{G}$ plus $s_0$.
    \item If there is a directed edge of $\mc{G}$ from the state $s$ to the state $t$, then there is also a directed edge of $\mc{G}'$ from $s$ to $t$. 
    \item  For each accept state $s$ of $\mc{G}$, $\mc{G}'$ has an additional directed edge with label $w$ starting at $s$ and ending at $s_0$. Note, this includes a directed edge starting and ending at $s_0$.

\end{itemize}

Let $J'$ be the language accepted by the new automaton $\mc{G}'$. Since $\mc{G}'$ is obtained from $\mc{G}$ by adding paths between accept states of $\mc{G}'$, $\mc{G}$ being pruned implies $\mc{G}'$ is pruned. Thus, $\rho_\mc{G} = \lambda_{J}$ and $\rho_{\mc{G}'} = \lambda_{J'}$. %Additionally,  $\mc{G}$ being pruned implies every vertex of $\mc{G}'$ is contained in a directed path from $s_0$ to an accept state of $\mc{G}'$.  
By construction, there is also a directed path from every accept state in $\mc{G}'$ to $s_0$. Thus, for every pair of  vertices $s,t$ of $\mc{G}'$ there exists a directed path from $s$ to $t$ and from $t$ to $s$. In particular, Lemma \ref{lem:proper subgraphs have smaller eigenvalue} implies  \[\lambda_{J}=\rho_{\mc{G}} < \rho_{\mc{G}'} = \lambda_{J'}.\] 

Since $\langle  H' ,g \rangle \cong  H' \ast \langle g \rangle $, the map  $J' \rightarrow G$ given by $u \to \overline{u}$ is injective and hence $J'$ is a regular language with alphabet $A$ that bijects with the subgroup $\langle H',g\rangle $. Therefore, $\lambda_{J'} = \lambda_{\langle  H' ,g \rangle,A}$ by Corollary \ref{Sequence of equalities}, and we have \[\lambda_{H',A} =\lambda_{J}= \rho_\mc{G} < \rho_{\mc{G}'} =\lambda_{J'} = \lambda_{\langle  H' ,g \rangle, A}.\]
 
By definition, $\lambda_{\langle  H' ,g \rangle, A} \leq \lambda_{G,A}$, thus we have $\lambda_{H',A} < \lambda_{G,A}$. The following  claim  verifies $\lambda_{H,A} = \lambda_{H',A}$, completing the proof that $\lambda_{H,A} < \lambda_{G,A}$.
\end{proof}

\begin{claim}\label{lem:finite index implies same growth}
Let $G$ be a group generated by the finite set $A$. If $H$ is a subgroup of $G$ and $H'$ is a finite index subgroup of $H$, then $\lambda_{H,A} = \lambda_{H',A}$.
\end{claim}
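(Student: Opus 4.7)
The plan is to sandwich the growth function of $H$ between that of $H'$ and a shifted version of that of $H'$, then pass to limsup $n$-th roots. The argument is a standard counting argument on coset representatives and does not use any of the Morse/stability machinery of the paper.

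First, since $H' \subseteq H$, the inclusion $B(e,n) \cap H' \hookrightarrow B(e,n) \cap H$ immediately yields $f_{H',A}(n) \leq f_{H,A}(n)$, and therefore $\lambda_{H',A} \leq \lambda_{H,A}$. For the reverse inequality, let $m = [H:H']$ and choose coset representatives $h_1, \ldots, h_m \in H$ so that $H = \bigsqcup_{i=1}^m h_i H'$. Set $K = \max_i |h_i|$, a finite constant since there are finitely many cosets. Each $h \in H$ can be uniquely written as $h = h_i h'$ for some $i \in \{1, \ldots, m\}$ and $h' \in H'$, and in that case $|h'| = |h_i^{-1} h| \leq |h_i| + |h| \leq K + |h|$. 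Hence whenever $h \in B(e,n) \cap H$, the corresponding $h'$ lies in $B(e, n+K) \cap H'$. Since the assignment $h \mapsto (i, h')$ is injective, we conclude
\[ f_{H,A}(n) \leq m \cdot f_{H',A}(n + K). \]

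Now I would take $n$-th roots and pass to the limsup. Since $\sqrt[n]{m} \to 1$, it suffices to check that $\limsup_n \sqrt[n]{f_{H',A}(n+K)} = \lambda_{H',A}$. This is a general fact: writing $a_n = f_{H',A}(n)$ and $\lambda = \lambda_{H',A}$, for any $\epsilon > 0$ one has $a_{n+K}^{1/(n+K)} \leq \lambda + \epsilon$ for all sufficiently large $n$, so
\[ a_{n+K}^{1/n} = \bigl(a_{n+K}^{1/(n+K)}\bigr)^{(n+K)/n} \leq (\lambda + \epsilon)^{(n+K)/n}, \]
and the right-hand side tends to $\lambda + \epsilon$ as $n \to \infty$. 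Letting $\epsilon \to 0$ gives $\limsup_n a_{n+K}^{1/n} \leq \lambda$, while the reverse inequality comes from picking a subsequence $m_k \to \infty$ with $a_{m_k}^{1/m_k} \to \lambda$ and setting $n_k = m_k - K$. Combining this with the displayed inequality above gives $\lambda_{H,A} \leq \lambda_{H',A}$, which together with the first paragraph yields $\lambda_{H,A} = \lambda_{H',A}$.

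There is no real obstacle; the only mildly delicate point is the index-shift identity for limsup $n$-th roots, and that is handled by the elementary estimate above. The claim then completes the proof of Proposition \ref{prop:growth_gap_technical}.
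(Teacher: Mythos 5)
Your proposal is correct and follows essentially the same route as the paper: decompose $H$ into finitely many cosets $h_iH'$, bound $f_{H,A}(n) \leq [H:H'] \cdot f_{H',A}(n+K)$ with $K = \max_i |h_i|$, and conclude that the growth rates agree. You merely spell out in more detail the limsup index-shift estimate (and the trivial inequality $\lambda_{H',A} \leq \lambda_{H,A}$) that the paper leaves implicit.
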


\begin{proof}
Let $k$ be the index of $H'$ in $H$.  Let $h_1H',\dots,h_kH'$ be the $k$ distinct cosets of $H'$ and let $K = \max\{|h_1|,\dots, |h_k|\}$. Since the cosets of $H'$ partition $H$, we have \[|B(e,n) \cap H| = \sum_{i=1}^k | B(e,n) \cap h_i H'| = \sum_{i=1}^k | B(h_i^{-1},n) \cap H'| \leq k|B(e,n+K) \cap H'|.\] This implies $\lambda_{H,A} = \lambda_{H',A}$ since $K$ and $k$ do not depend on $n$.
\end{proof}

To apply Proposition \ref{prop:growth_gap_technical} to the situations in Theorem \ref{thm:definite_gap_between_growths}, we employ the following results of Russell, Spriano, and Tran to find the required $H'$ and $g \in G$. Both of these results are consequence of a general combination theorem for stable subgroups of Morse local-to-global groups \cite{Morse-local-to-global}.

\begin{theorem}[{\cite[Corollary 3.6]{Morse-local-to-global}}]\label{thm: free factor} Let $G$ be a torsion-free, Morse local-to-global group. If $H$ is a non-trivial, infinite
index, stable subgroup of $G$, then there is an infinite order element $g \in G$ such that $\langle H, g \rangle = H \ast \langle g \rangle $ and $\langle H,g \rangle$ is stable in $G$. 
\end{theorem}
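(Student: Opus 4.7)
The plan is to construct the element $g$ by conjugating a Morse element already lying in $H$ by an element of $G$ that witnesses the infinite index of $H$, then run a ping-pong argument at a large power, and finally close using the automatic/combination machinery for locally Morse paths developed earlier in the paper.

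First I would produce a candidate infinite-order element. Since $H$ is stable it is a hyperbolic group, and by Theorem \ref{thm:stable iff hyperbolic and Morse} every infinite-order element of $H$ is represented by an $M$-Morse geodesic in $\cay(G,A)$ for the stability gauge $M$ of $H$. Torsion-freeness of $G$ forces $H$ (which is non-trivial) to contain some infinite-order element $h$, whose powers then trace out a Morse quasi-axis $\alpha \subset \cay(G,A)$. Next, because $H$ has infinite index, there exist elements $k \in G$ whose left cosets $kH$ lie arbitrarily far from $H$ in $\cay(G,A)$; pick such a $k$ and set $h' := k h k^{-1}$. Then $h'$ is again Morse in $G$ with quasi-axis $\alpha' = k \cdot \alpha$, and by choosing $k$ far enough from $H$ we arrange that $\alpha'$ lies outside a large neighborhood of $H$.

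I would then run ping-pong in $\cay(G,A)$ (or on $\partial_* G$) using the attracting/repelling dynamics of $h'$: for a sufficiently large exponent $N$, words alternating between non-trivial elements of $H$ and non-zero powers of $g := (h')^N$ represent distinct elements of $G$, so $\langle H, g\rangle \cong H \ast \langle g\rangle$. This gives the required infinite-order $g$ and the free-product structure; the hypothesis that $H$ has infinite index is exactly what guarantees one can separate the attracting fixed points of $g$ from the limit set of $H$.

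The main obstacle, and the reason the Morse local-to-global property is needed, is upgrading this abstract free product to a \emph{stable} subgroup of $G$. The plan is to concatenate, for any normal form $h_1 g^{n_1} h_2 g^{n_2} \cdots$ in $H \ast \langle g\rangle$, the $M$-Morse geodesics representing each $h_i \in H$ with the Morse quasi-axis segments representing each $g^{n_i}$. If $N$ is chosen large enough, the ping-pong separation makes this concatenation a $(B;M_0)$-local Morse quasi-geodesic for some fixed local scale $B$ and gauge $M_0$ depending only on the stability constants of $H$ and on $\alpha'$. The Morse local-to-global property (Definition \ref{defn:Morsel_local_to_global}) then promotes these local data to a uniform global $M'$-Morse quasi-geodesic, which shows that every geodesic in $\cay(G,A)$ from $e$ to an element of $\langle H, g\rangle$ is uniformly Morse and uniformly close to $\langle H, g\rangle$; that is, $\langle H, g\rangle$ is stable.

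The delicate point will be the circular dependency typical of such combination theorems: the local scale $B$ furnished by Morse local-to-global depends on the gauge $M_0$ one is trying to exhibit, while the ping-pong regions must be wide enough relative to $B$. I would resolve this in the usual order: fix the gauge $M$ from the stability of $H$, read off an $M_0$ one wants to achieve for concatenations, apply Morse local-to-global to this $M_0$ to obtain $B$, and only then choose $k$ far enough from $H$ and $N$ large enough so that the attracting/repelling neighborhoods of $g$ and the translates of $H$ are pairwise separated by more than $B$. With these choices the local Morse estimate goes through and the theorem follows.
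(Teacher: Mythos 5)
First, note that the paper does not prove this statement itself: it is quoted verbatim from \cite[Corollary 3.6]{Morse-local-to-global}, so there is no in-paper proof to match your argument against; what follows assesses your proposal on its own terms and against the route the paper itself uses for the analogous residually finite case.

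There is a genuine gap at the step where you produce the element $g$. You claim that because $H$ has infinite index ``there exist elements $k \in G$ whose left cosets $kH$ lie arbitrarily far from $H$.'' Infinite index only yields single elements $k$ with $d(k,H)$ arbitrarily large; the statement you need is about the double coset $HkH$ being far from the identity, and that does \emph{not} follow from infinite index alone. For example, a free group of rank two admits a faithful highly transitive action, and the (infinite index) stabilizer $H$ of a point then satisfies $G=H\cup HkH$ for a single $k$, so every left coset of $H$ stays within bounded distance of $H$. Moreover, even granting a $k$ with $d(k,H)$ large, the quasi-axis of $h'=khk^{-1}$ is coarsely $k\langle h\rangle$, and nothing forces $k h^n$ to stay far from $H$ for all $n$, nor is $\langle h'\rangle\cap H=\{e\}$ automatic; so the separation needed to start ping-pong is not established. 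The correct input here is the finite height/width of infinite-index stable subgroups \cite{AMST_intersection_of_stable_subgroups,Tran_SQC}, exactly as this paper uses in the residually finite case of Theorem \ref{thm:definite_gap_between_growths}: it provides $q\in G$ with $H\cap qHq^{-1}$ finite, hence trivial by torsion-freeness, so that $g_0=qhq^{-1}$ is a Morse element with $\langle g_0\rangle\cap H=\{e\}$, after which the combination machinery of Russell--Spriano--Tran (the source of Theorems \ref{thm: free factor} and \ref{thm:res_finit_combination}) applies. Your second half---concatenating the stability geodesics for the $h_i$ with axis segments for powers of $g$, checking the concatenation is a local Morse quasi-geodesic, and invoking the Morse local-to-global property with the quantifiers in the order you describe---is indeed the shape of that combination theorem, but it too hinges on the separation/trivial-intersection input that your construction of $g$ does not supply. (A smaller caveat: ping-pong ``on $\partial_* G$'' needs care since the Morse boundary is not compact, a point the paper flags explicitly; working with the compact limit set of the stable subgroup $H$ or directly in $\cay(G,A)$ avoids this, but again only after the intersection/separation issue above is resolved.)
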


\begin{theorem}[{Special case of \cite[Corollary 3.4]{Morse-local-to-global}}]\label{thm:res_finit_combination}
Let $G$ be a Morse local-to-global group. If $H$ is a residually finite, stable subgroup and $g \in G$ is a Morse element so that $H \cap \langle g\rangle = \{e\}$, then there is a finite index subgroup $H' \leq H$ and $n \in \mathbb{N}$ so that $\langle H', g^n \rangle \cong H' \ast \langle g^n \rangle$ and  $\langle H', g^n \rangle$ is stable in $G$.
\end{theorem}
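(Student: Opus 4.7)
The plan is to construct $H'$ and $n$ by combining a ping-pong argument in the Cayley graph with residual finiteness of $H$, then appealing to a combination theorem for stable subgroups of Morse local-to-global groups to conclude stability. First I would set up the geometric picture. Since $H$ is $(M,k)$-stable and $g$ is a Morse element, both $H$ and $\langle g\rangle$ are quasi-convex, each element is joined to $e$ by a uniformly Morse geodesic, and $g$ admits a bi-infinite Morse quasi-geodesic axis $\gamma$ in $\cay(G,A)$. The hypothesis $H \cap \langle g\rangle = \{e\}$ together with the Morseness of $\gamma$ and the stability of $H$ should force the intersection of the $R$-neighborhoods $\mathcal{N}_R(H)$ and $\mathcal{N}_R(\gamma)$ to be bounded for every $R$: otherwise a pigeonhole plus the uniform fellow-traveling property of Morse geodesics (Lemma \ref{lem: fellow traveling}) would produce distinct elements of $H$ differing by a power of $g$, contradicting the trivial intersection hypothesis.

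Second, having fixed a sufficiently large separation parameter $R$ depending only on the Morse gauges involved, let $F \subset H \setminus \{e\}$ be the (by the previous step, finite) set of $h$ such that $h \cdot \mathcal{N}_R(\gamma) \cap \mathcal{N}_R(\gamma) \neq \emptyset$. Residual finiteness of $H$ then produces a finite index subgroup $H' \leq H$ with $H' \cap F = \{e\}$. Finite index subgroups of stable subgroups are stable, so $H'$ remains $(M',k')$-stable for some parameters depending only on $M$, $k$, and the index.

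Third, I would pick $n$ large enough that the translates $g^{jn} \cdot \mathcal{N}_R(H')$ for $j \neq 0$ are pairwise disjoint from $\mathcal{N}_R(H')$; this is possible because $g$ has positive translation length on $\gamma$ and $H'$ is contained in a bounded neighborhood of $H$, which meets $\gamma$ only in a bounded set by the first step. The classical ping-pong lemma applied to the action of $H'$ and $\langle g^n\rangle$ on the Cayley graph then yields $\langle H', g^n\rangle \cong H' \ast \langle g^n\rangle$.

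The main obstacle, and the step where the Morse local-to-global property does the real work, is verifying stability of $\langle H', g^n\rangle$ in $G$. I would show this by writing an arbitrary geodesic representative of an element of the free product as a concatenation of geodesic segments alternating between $H'$-translates and $\langle g^n\rangle$-translates. Each such segment is uniformly Morse by the stability of the factors, and by choosing $n$ large enough relative to the local scale $B$ associated to the target Morse gauge (as in Definition \ref{defn:Morsel_local_to_global}), the concatenation becomes a $(B;M'')$-local Morse quasi-geodesic for a controlled gauge $M''$. The Morse local-to-global property of $G$ then promotes this local Morse condition to a global Morse quasi-geodesic bound, which is exactly the quasi-convexity plus uniform Morseness required for stability of $\langle H', g^n\rangle$. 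The delicate point is arranging the parameters so that the transition between adjacent segments occurs in a uniformly ``hyperbolic-looking'' region — this is precisely what the finite-index pass (killing $F$) and the high-power pass (killing close returns of $\gamma$) were designed to guarantee.
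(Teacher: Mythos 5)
The paper does not actually prove this statement: it is imported verbatim as a special case of Corollary~3.4 of \cite{Morse-local-to-global} (Russell--Spriano--Tran), so there is no internal proof to compare against. Judged on its own terms, your outline has the right architecture --- bounded intersection of $H$ with the axis of $g$ deduced from $H\cap\langle g\rangle=\{e\}$ together with stability and Morseness, residual finiteness used to kill a finite bad set, high powers of $g$, and a broken-geodesic argument closed up by the Morse local-to-global property --- and your first two steps are sound: the pigeonhole argument does produce a nontrivial element of $H\cap\langle g\rangle$ if $\mathcal{N}_R(H)\cap\mathcal{N}_R(\gamma)$ is unbounded, and $F$ is finite because membership in $F$ forces $h$ into a bounded neighborhood of $\gamma$.

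The genuine gap is in your third step. The disjointness you want, $g^{jn}\mathcal{N}_R(H')\cap\mathcal{N}_R(H')=\emptyset$ for all $j\neq 0$, unwinds to the assertion that $g^{jn}\notin H'\,B(e,2R)\,H'$, i.e.\ that only finitely many powers of $g$ lie in a fixed neighborhood of the double coset $H'H'$. That is a bounded-packing statement about $H'$ against its own $g$-translates; it does not follow from ``$\mathcal{N}_R(H')$ meets $\gamma$ in a bounded set,'' which only controls single elements of $H'$ near the axis, nor from positivity of the translation length of $g$. Moreover, the classical ping-pong lemma for free products needs nested sets ($hX_2\subseteq X_1$ for $h\in H'\setminus\{e\}$ and $g^{jn}X_1\subseteq X_2$ for $j\neq 0$), which your disjoint-translates condition does not supply. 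The repair is to discard step three and let your final step carry the whole proof: step one already gives a uniform bound $D$ on the Gromov products $(h\mid g^m)_e$ for $h\in H$ and $m\neq 0$ (a geodesic to $h$ stays near $H$, a geodesic to $g^m$ stays near $\gamma$, and by step one these neighborhoods overlap in a bounded set near $e$, so by Lemma \ref{lem: fellow traveling} the two geodesics diverge after distance $D$); and killing $F$ forces $|h|>R$ for every nontrivial $h\in H'$ (if $|h|\le R$ and the axis passes through $e$, the point $h$ itself witnesses $h\mathcal{N}_R(\gamma)\cap\mathcal{N}_R(\gamma)\neq\emptyset$), a fact you need but do not record, since otherwise short syllables from $H'$ destroy the local quasi-geodesic condition at scale $B$. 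With $R$ and $n$ large relative to $D$ and the local scale of Definition \ref{defn:Morsel_local_to_global}, the broken path spelling a reduced alternating word is a local Morse quasi-geodesic, hence a global one; this single argument yields simultaneously that the word is nontrivial (so $\langle H',g^n\rangle\cong H'\ast\langle g^n\rangle$) and that $\langle H',g^n\rangle$ is stable, which is exactly how the cited combination theorem is proved.
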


\begin{proof}[Proof of Theorem \ref{thm:definite_gap_between_growths}]
Let $G$ be a Morse local-to-global group and $H <G$ be an infinite index stable subgroup. Fix a finite generating set $A$ for $G$. We can assume $H$ is not finite  as $\lambda_{H,A} < \lambda_{G,A}$ whenever $H$ is finite and $G$ is infinite.

First assume $G$ is virtually torsion-free and let $G'$ be a  finite index torsion-free subgroup of $G$. Since the Morse local-to-global property is invariant under quasi-isometry, $G'$ is also a Morse local-to-global group. Define $H'$ to be $G' \cap H$. Because $H'$ is a finite index subgroup of $H$, $H'$ is a stable subgroup of both $G$ and $G'$. Since $G'$ is torsion-free, Theorem \ref{thm: free factor} provides a non-trivial $g \in G' < G$, so that $\langle H', g \rangle \cong H' \ast \langle g\rangle$. Thus, the requirements for Proposition \ref{prop:growth_gap_technical} are satisfied implying $\lambda_{H,A} < \lambda_{G,A}$.

Now assume $H$ is residually finite. Let $h$ be an infinite order element of $H$.  Since $H$ is stable, $h$ is a Morse element of $G$. Because all infinite index stable subgroups have finite width \cite{AMST_intersection_of_stable_subgroups,Tran_SQC}, there exists $q \in G$ so that $H \cap qHq^{-1}$ is finite. In particular, $H \cap \langle qhq^{-1} \rangle = \{e\}$, and $qhq^{-1}$ is a Morse element because $h$ is a Morse element. Thus, Theorem \ref{thm:res_finit_combination} provides the subgroup $H' \leq H$ and the element $g \in G$ required by Proposition \ref{prop:growth_gap_technical}  to ensure $\lambda_{H,A} < \lambda_{G,A}$.
\end{proof}

\section{Applications to the Morse boundary}\label{sec:boundary_applications}
We conclude with some applications of our previous results to the dynamics of a Morse local-to-global group on its Morse boundary.

The Morse boundary of a proper geodesic metric space $X$ is defined as such: consider the set of all Morse geodesic rays in $X$ based at a point $e$ and modulo asymptotic equivalence.  This collection is called the \emph{Morse boundary} of $X$ and is denoted by $\partial_{*} X$ or $\partial_{*} X_e$ if we wish to emphasize the basepoint $e$.

To topologize the boundary, first fix a Morse gauge $M$ and consider the subset of the Morse boundary that consists of all rays in $X$ with Morse gauge at most $M$:  \begin{equation*} \partial_{*}^M X_e= \{[\alpha] \mid \exists \beta \in [\alpha] \text{ that is an $M$-Morse geodesic ray with base point }e\}. \end{equation*} We topologize this set with the compact-open topology. This topology is equivalent to the one defined by the following system of neighborhoods, $\{V_n(\alpha) \mid n \in \mathbb{N} \}$, at each point $[\alpha] \in \partial_{*}^M X_e$: the set $V_n( \alpha)$ is the set of $M$-Morse geodesic rays $\gamma \colon [0,\infty) \to X$ with $\gamma(0)=e$ and $d(\alpha(t), \gamma(t))< \delta_M$ for all $t<n$, where $\delta_M$ is a constant that depends only on $M$. 

Let $\mathcal M$ be the set of all Morse gauges. We put a partial ordering on $\mathcal M$ so that  for two Morse gauges $M, M' \in \mathcal M$, we say $M \leq M'$ if and only if $M(\lambda,\epsilon) \leq M'(\lambda,\epsilon)$ for all $\lambda \geq 1$ and $\epsilon\geq 0$. We define the Morse boundary of $X$ to be
 \begin{equation*} \partial_{*} X_e=\varinjlim_\mathcal{M} \partial^M_* X_e \end{equation*} with the induced direct limit topology, i.e., a set $U$ is open in $\partial_{*} X_e$ if and only if $U \cap \partial^M_* X_e$ is open for all $M \in \mc{M}$.  For more details on the Morse boundary see \cite{Cordes2017}. 

The first salient feature of the Morse boundary is that it is invariant under quasi-isometry regardless of the geometry of the space. 

\begin{theorem}[\cite{Cordes2017}]\label{thm:Morse_boundar_QI_invariant}
Let $X$ and $Y$ be proper geodesic metric spaces. Every quasi-isometry $f\colon X \to Y$ induces a homeomorphism $\partial f \colon \partial_{*} X_e \to \partial_{*} Y_{f(e)}$.
\end{theorem}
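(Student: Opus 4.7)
The plan is to construct $\partial f$ explicitly, check it is a well-defined bijection that respects the stratification by Morse gauge, and then verify continuity on each stratum; the direct limit topology will then promote this to continuity on all of $\partial_{*}X_e$.

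First I would construct the map. Given an $M$-Morse geodesic ray $\alpha \colon [0,\infty) \to X$ with $\alpha(0) = e$, the composition $f \circ \alpha$ is a $(k,c)$-quasi-geodesic ray in $Y$ starting at $f(e)$, with $k,c$ depending only on the quasi-isometry constants of $f$. A standard argument (quasi-geodesics sublinearly tracked by geodesics, combined with the Arzel\`a--Ascoli theorem applied to geodesic segments from $f(e)$ to $f(\alpha(n))$ using properness of $Y$) produces a geodesic ray $\beta \colon [0,\infty) \to Y$ with $\beta(0) = f(e)$ that lies in a bounded Hausdorff neighborhood of $f\circ\alpha$. Because $f\circ\alpha$ is a quasi-geodesic fellow-traveling an $M$-Morse quasi-geodesic (the image $f\circ\alpha$ is itself Morse with gauge depending only on $M$ and the quasi-isometry constants, by a direct check from the definition of Morse), the ray $\beta$ is $M'$-Morse for some $M'$ depending only on $M$ and $f$. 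Set $\partial f([\alpha]) := [\beta]$.

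Next I would check well-definedness and the bijection. If $\alpha \sim \alpha'$ in $\partial_{*}X_e$, then their images under $f$ stay a bounded distance apart, hence the resulting geodesic rays $\beta,\beta'$ are asymptotic, so $[\beta] = [\beta']$; independence from the choice of $\beta$ follows similarly. The same construction applied to a quasi-inverse $\bar f \colon Y \to X$ of $f$ yields a map $\partial \bar f \colon \partial_{*}Y_{f(e)} \to \partial_{*}X_e$, and composing gives rays that fellow-travel the original ones by a distance bounded in terms of the quasi-isometry constants of $f$ and $\bar f$, so $\partial \bar f \circ \partial f$ and $\partial f \circ \partial \bar f$ are the identity maps. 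The construction also shows there is a monotone function $\Phi$ on Morse gauges such that $\partial f(\partial_{*}^M X_e) \subseteq \partial_{*}^{\Phi(M)} Y_{f(e)}$.

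For continuity, it suffices to show that the restriction $\partial f \colon \partial_{*}^M X_e \to \partial_{*}^{\Phi(M)} Y_{f(e)}$ is continuous for every Morse gauge $M$, since the target topology on $\partial_{*}Y_{f(e)}$ is the direct limit topology. On the stratum $\partial_{*}^M X_e$ with the compact-open topology, a sequence $[\alpha_n]$ converging to $[\alpha]$ means the representative $M$-Morse rays $\alpha_n$ converge uniformly on compact sets to $\alpha$ after reparametrization. Composing with the Lipschitz (coarsely so) map $f$ and invoking the uniform bound (depending only on $M$ and $f$) between $f \circ \alpha_n$ and the chosen geodesic representatives $\beta_n$ shows that $\beta_n \to \beta$ uniformly on compact sets, hence $\partial f([\alpha_n]) \to \partial f([\alpha])$ in $\partial_{*}^{\Phi(M)} Y_{f(e)}$. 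Doing the same for $\partial \bar f$ yields continuity of the inverse, completing the proof that $\partial f$ is a homeomorphism.

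The main obstacle I expect is the careful bookkeeping in the first step: producing a canonical geodesic representative $\beta$ in $Y$ close to the quasi-geodesic $f\circ\alpha$, and tracking how its Morse gauge depends only on $M$ and the quasi-isometry data. A secondary subtlety is that basepoints differ on the two sides, but since the Morse boundary is known to be independent of basepoint (up to canonical homeomorphism), this can be absorbed at the end or, more cleanly, stated as the homeomorphism $\partial_{*}X_e \to \partial_{*}Y_{f(e)}$ as in the statement. Continuity at the direct limit level reduces to the stratified version via the universal property of the direct limit topology, which is essentially formal once the stratified continuity is in hand.
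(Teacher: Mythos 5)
This theorem is quoted from \cite{Cordes2017} and the paper offers no proof of its own, so the comparison is with the cited source, whose argument your outline reproduces: push an $M$-Morse ray forward to a Morse quasi-geodesic with gauge controlled by $M$ and the quasi-isometry constants, extract a geodesic representative via properness and Arzel\`a--Ascoli, define the map stratum-by-stratum, pass to the direct limit, and invert using a quasi-inverse. Your approach is essentially the same as the cited proof; the only point to tighten is that continuity on each stratum comes not from literal uniform convergence of the $\beta_n$ (you only get fellow-traveling within a fixed constant) but from the Morse fellow-traveling lemma, which upgrades that bounded closeness to the $\delta_{M'}$-closeness demanded by the neighborhood basis $V_n$.
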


For a finitely generated group $G$, we define the \emph{Morse boundary of $G$}, denoted $\partial_{*} G$, to be the Morse boundary of a Cayley graph of $G$ with respect to some finite generating set. The quasi-isometry invariance of the Morse boundary implies this is well defined up to homeomorphism. 

The action of $G$ on its Cayley graph induces an action by homeomorphisms on its Morse boundary. Cordes and Durham defined the following notation of the limit set of a subgroup in the Morse boundary. In the sequel, the notation $[e,g]$ for $g \in G$ denotes a geodesic from the identity $e$ to the element $g$ in the Cayley graph of $G$ with respect to some finite generating set. For a sequence $\{g_n\}$ in $G$ and $x \in \partial_{*} G$, we write $[e,g_n]\to x$ if $[e,g_n]$ converges uniformly on compact subsets to a geodesic ray representing $x$.

\begin{definition}[Limit set of a subgroup; {\cite[Definition 3.2]{CordesDurham2017}}]
Let $H$ be a subgroup of a finitely generated group $G$. The \emph{limit set} of $H$ in $\partial_{*} G$, denoted  $\Lambda_{*}(H),$ is defined to be $$\Lambda_{*}(H):=\{x \in \partial_{*} G \mid  \exists M \in \mc{M} \text{ and } h_n \in H \text{ so that } [e,h_n] \rightarrow x \text{ and each }  [e,h_n] \text{ is }M\text{-Morse} \}.$$
\end{definition}

If $g$ is a Morse element of $G$, then the cyclic subgroup $\langle g \rangle$ has a pair of limit points denoted $g^+$  and $g^-$; $g^+$ is the limit of the geodesics $[e,g^n]$, while $g^-$ is the limit of the geodesics $[e,g^{-n}]$. The Morse element $g$ acts with a form of north-south dynamics on the Morse boundary with $g^+$ being the attracting fixed point and $g^-$ being the repelling fixed point.

\begin{lemma}[North-south dynamics; {\cite[Corollary 6.8]{Qing19}}] \label{lem: North-south dynamics} Let $G$ be a finitely generated group. For any Morse element $g \in G$, if $U \subseteq \partial_{*} G$ is an open neighborhood of $g^{+}$ and $K \subseteq \partial_{*} G$ is a compact set with $g^{-} \notin K$, then there exists $m \in \mathbb{N}$ such that $g^{m}K \subseteq U$.
\end{lemma}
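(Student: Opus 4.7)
The plan is to combine three ingredients: the fact that compact subsets of the Morse boundary are contained in a single stratum $\partial^M_* G$, the Morse property of the axis of the Morse element $g$, and uniform fellow-traveling of geodesic rays near a Morse quasi-geodesic.

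First, since $\partial_{*} G$ carries the direct limit topology over Morse gauges, the compact set $K$ is contained in a single stratum $\partial^M_* G$ for some Morse gauge $M$, which we may enlarge so that $g^{+}$ and $g^{-}$ also lie in $\partial^M_* G$. The stratum $\partial^M_* G$ is metrizable via the neighborhood system $\{V_n(\alpha)\}$ recalled before the lemma. Fix an $M_0$-Morse bi-infinite quasi-geodesic axis $\alpha$ of $g$ with $\alpha(+\infty) = g^{+}$, $\alpha(-\infty) = g^{-}$, and with translation length $\tau > 0$ along $\alpha$; for each $x \in K$, fix an $M$-Morse geodesic ray $\beta_x$ from $e$ representing $x$.

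Second, I would convert the hypothesis $g^{-} \notin K$ into the \emph{uniform} geometric statement that there exists $R = R(K) > 0$ such that for every $x \in K$ the ray $\beta_x$ fails to $(8M(3,0))$-fellow travel the negative half-axis $\alpha\vert_{(-\infty,0]}$ for any distance exceeding $R$. If this failed, one could extract a sequence $x_k \in K$ whose representative rays fellow travel the negative axis for arbitrarily long times; by sequential compactness of $K$ inside the metrizable stratum $\partial^M_* G$, a subsequence would converge in the neighborhood system to a ray which, by Lemma \ref{lem: fellow traveling}, must represent $g^{-}$, contradicting $g^{-} \notin K$.

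Third, I would use this uniform bound to push $K$ toward $g^{+}$. Translating by $g^{n}$, the ray $g^{n}\beta_x$ starts at $g^{n}e$, which lies within bounded distance of $\alpha(n\tau)$. Because $\beta_x$ diverges from $\alpha\vert_{(-\infty,0]}$ after distance at most $R$, the translate $g^{n}\beta_x$ diverges from the backward continuation of $\alpha$ past $g^{n}e$ after distance at most $R$, so the concatenation of $\alpha\vert_{[0,n\tau]}$ with $g^{n}\beta_x$ is a uniform quasi-geodesic whose parameters depend only on $M$ and $M_0$. Standard Morse fellow-traveling then forces any geodesic ray from $e$ representing $g^{n}x$ to lie in a single stratum $\partial^{M'}_* G$ (with $M'$ depending only on $M$ and $M_0$) and to fellow-travel the positive half-axis $\alpha\vert_{[0,\infty)}$ out to distance at least $n\tau - R - C$. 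Given the open neighborhood $U$ of $g^{+}$, choose $N$ and $M''$ with $V_N(\alpha\vert_{[0,\infty)}) \cap \partial^{M''}_* G \subseteq U$; picking $m$ so that $m\tau - R - C > N$ yields $g^{m}K \subseteq U$.

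The main obstacle is the uniformity step in the second paragraph: the topological assumption $g^{-} \notin K$ must be promoted to a uniform bound on how long the representative rays $\beta_x$ can fellow travel the negative axis. This relies on the metrizability of $\partial^M_* G$ together with the characterization, via Lemma \ref{lem: fellow traveling}, of nearness to $g^{-}$ in the basic neighborhood system as long fellow-traveling with $\alpha\vert_{(-\infty,0]}$. Once this uniformity is established, the third paragraph is a quantitative north-south argument along the Morse axis.
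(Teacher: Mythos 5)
The paper does not actually prove this lemma: it is quoted wholesale from \cite[Corollary 6.8]{Qing19}, so there is no internal argument to compare your proposal against. Judged on its own, your outline reconstructs the standard proof of north--south dynamics on the Morse boundary and is sound in structure: promote $g^-\notin K$ to a uniform bound $R$ on how long representatives of points of $K$ can fellow-travel the negative half-axis, then use Morse thin-triangle/concatenation estimates along the axis to force every geodesic from $e$ to $g^m x$, $x\in K$, to fellow-travel the positive half-axis for distance roughly $m\tau-R$, and finally translate this into membership in a basic neighborhood $V_N$ of $g^+$ inside a fixed stratum.

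Two steps are asserted as if automatic but are not. First, the claim that a compact subset of $\partial_* G$ lies in a single stratum $\partial^M_* G$ is not a formal consequence of the direct limit topology: the directed set of Morse gauges is uncountable, and the naive closed-discrete-subset argument for direct limits does not go through. It is a genuine theorem about Morse boundaries of proper geodesic spaces and must be cited or proved (it is available in the literature, essentially in \cite{Qing19} itself, and in \cite{Murray2015TopologyAD,CordesDurham2017} in closely related settings); similarly, compactness and metrizability of each stratum $\partial^M_* X_e$ use properness of the Cayley graph \cite{Cordes2017}. Second, in the push-forward step, the assertion that the concatenation $\alpha\vert_{[0,n\tau]} \ast g^n\beta_x$ is a uniform quasi-geodesic needs the Morse property of the axis to rule out $g^n\beta_x$ re-entering a neighborhood of the axis near $e$ after leaving the $R$-neighborhood, and at the end you must reconcile the fellow-travel constant produced by the triangle argument with the constant $\delta_{M'}$ built into the neighborhoods $V_n$, via Lemma \ref{lem: fellow traveling}. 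These are standard but are precisely the content of the cited result; with those lemmas and citations supplied, your sketch is a correct reconstruction of \cite[Corollary 6.8]{Qing19}.
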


Regardless of the subgroup, limit sets are always closed in the Morse boundary.

\begin{lemma}[Limit sets are closed] \label{lem: limit set is closed }
Let $H$ be a subgroup of a finitely generated group $G$, and let $\{x_n\}$ be a sequence in  $\Lambda_{*}(H)$. If there exists $x \in \partial_{*} G$ such that $x_n \rightarrow x$ in the direct limit topology on $\partial_{*} G$, then $x \in \Lambda_{*} (H).$
\end{lemma}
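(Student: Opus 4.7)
The strategy is to produce a single Morse gauge $M^*$ and a sequence $\{y_m\} \subseteq H$ with $[e, y_m] \to x$ uniformly on compacts, each $[e, y_m]$ being $M^*$-Morse, which certifies $x \in \Lambda_{*}(H)$.

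First I would promote the direct-limit convergence $x_n \to x$ to convergence in a single stratum of the Morse boundary. A standard property of the direct-limit topology on $\partial_{*} G$, given that each $\partial_{*}^M G$ is Hausdorff and included in $\partial_{*} G$, is that a convergent sequence together with its limit lies in some $\partial_{*}^M G$, with convergence occurring in the compact-open topology on that stratum. This yields a Morse gauge $M$ and $M$-Morse representatives $\alpha$ of $x$ and $\alpha_n$ of each $x_n$ with $\alpha_n \to \alpha$ uniformly on compact subsets (within the constant $\delta_M$).

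Next I would apply a diagonal argument. For each $n$, the hypothesis $x_n \in \Lambda_{*}(H)$ supplies a Morse gauge $M_n$ and a sequence $\{h_{n,k}\}_{k \in \mathbb{N}} \subseteq H$ with each $[e, h_{n,k}]$ being $M_n$-Morse and $[e, h_{n,k}] \to x_n$ uniformly on compacts. Selecting $k_n$ large enough that the first $n$ steps of $[e, h_{n, k_n}]$ lie within $\delta_{M_n}$ of the first $n$ steps of $\alpha_n$, setting $y_n := h_{n, k_n}$, and combining with $\alpha_n \to \alpha$, the geodesics $[e, y_n]$ track $\alpha$ on increasingly long initial segments, so $[e, y_n] \to x$ uniformly on compact subsets.

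The main obstacle is that the gauges $M_n$ governing $[e, y_n]$ are not a priori bounded, so I need to upgrade to a uniform Morse gauge $M^*$ depending only on $M$. My plan is to refine the choice of $k_n$ so that $y_n$ itself lies within a uniform distance of the $M$-Morse ray $\alpha_n$, not merely that initial segments fellow travel. To achieve this, I would exploit that the limit ray of $\{h_{n,k}\}_{k}$ and $\alpha_n$ both represent $x_n$ and are therefore asymptotic, so by Lemma \ref{lem: fellow traveling} combined with the uniform convergence on compacts of $[e, h_{n,k}]$ to $\alpha_n$, one can select $k_n$ whose endpoint $h_{n, k_n}$ is within a distance depending only on $M$ of $\alpha_n(|h_{n, k_n}|)$. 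A version of Lemma \ref{lem: Morse is contagious} for geodesics whose endpoints are within a bounded distance then promotes $[e, y_n]$ to being $M^*$-Morse for some $M^*$ depending only on $M$, and the resulting sequence $\{y_n\} \subseteq H$ witnesses $x \in \Lambda_{*}(H)$.
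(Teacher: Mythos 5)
Your first two steps reproduce the paper's argument: the paper invokes Lemma 5.3 of \cite{Qing19} to place $\{x_n\}$ and $x$ in a single stratum $\partial_{*}^M X_e$ with $M$-Morse representatives $\alpha_n \to \alpha$ uniformly on compact sets, and then performs the same diagonal extraction, obtaining finite geodesics from $e$ to elements of $H$ that fellow-travel $\alpha$ on longer and longer initial segments; at that point the paper concludes $x \in \Lambda_{*}(H)$ and stops.

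The extra step you add, however, relies on a claim that fails. From the certificate for $x_n \in \Lambda_{*}(H)$ you only know that the geodesics $[e,h_{n,k}]$ converge uniformly on compact subsets to a ray representing $x_n$; this controls initial segments only and gives no control on where the endpoints $h_{n,k}$ lie relative to that ray or to $\alpha_n$. Already in a tree, where every geodesic is Morse with one uniform gauge, one may take $h_{n,k}$ whose geodesic follows the ray for time $k$ and then branches off for an arbitrarily long time $L_k$: the segments converge on compacta to the ray, yet each endpoint is at distance $L_k$ from it. So there is in general no choice of $k_n$ with $d\bigl(h_{n,k_n}, \alpha_n(|h_{n,k_n}|)\bigr)$ bounded in terms of $M$ alone, and asymptoticity of the limit ray to $\alpha_n$ together with Lemma \ref{lem: fellow traveling} does not help, since that lemma only controls the segments up to (roughly) the parameter at which closeness is already known, never the far endpoint. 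Hence your mechanism for promoting $[e,y_n]$ to a uniform gauge $M^*$ via a bounded-endpoint version of Lemma \ref{lem: Morse is contagious} is not available, and the final third of your proof has a genuine gap. You are right that the definition of $\Lambda_{*}(H)$ literally asks for a uniform gauge on the certificate geodesics, so your instinct to address it is reasonable; but note that the paper's own proof does not attempt such an upgrade---it deduces membership directly from the convergence of the geodesics $\beta_i$ (from $e$ to elements of $H$) to $\alpha$---so the part of your argument that breaks is exactly the part that goes beyond the paper's proof, and it would need a different idea to be completed.
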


\begin{proof}
Fix a finite generating set $A$ for $G$. Let $X=\cay(G,A)$ and let $\partial_{*} X_e$ be the Morse boundary of $X$ based at the identity $e$ of $G$. Let $\{x_n\} \subseteq  \Lambda_{*} (H)$ be a sequence that converges to $x$ in $\partial_{*} X_e$. By Lemma 5.3 of \cite{Qing19}, there exists a Morse gauge $M$ such that $x_n,x \in   \partial_{*}^{M} X_e$ for all $n \in \mathbb{N}$.
Let $\alpha, \alpha_n:[0, \infty) \rightarrow X$ be geodesic rays starting at $e$ with $[\alpha]=x$ and $[\alpha_n]=x_n.$ Since $x_n \rightarrow x$, we can pass to a subsequence so that $\{\alpha_n\}$ converges uniformly on compact subsets to $\alpha$. In particular, for each $i \in \mathbb{N}$, there exists $n_i \in \mathbb{N}$ so that for all $t \leq i$ we have $d(\alpha(t),\alpha_{n_i}(t)) \leq \delta_M$, where $\delta_M$ is a fixed constant depending only on $M$.
For each $i \in \mathbb{N}$, the fact that $[\alpha_{n_i}] = x_n \in \Lambda_{*}(H)$  implies there exists a sequence $\{h_{j}\}$ of elements of $H$ such that the sequence of geodesics $\{[e,h_j]\}$  converges uniformly on compact subsets to $\alpha_{n_i}$. Thus, for each $ i \in \mathbb{N}$, there exists a finite geodesic $\beta_i\colon [0,a_i] \to \cay(G,A)$  so that $a_i \geq i$, $\beta_i(0) = e$, $\beta_i(a_i) \in H$, and $d(\alpha_{n_i}(t),\beta_i(t))\leq 1$ for all $t \leq i$. Therefore, for each $i \in \mathbb{N}$ and $t<i$, we have \[d(\alpha(t),\beta_i(t))\leq d(\alpha(t),\alpha_{n_i}(t)) + d(\alpha_{n_i}(t),\beta_i(t) \leq \delta_M+1.\] Thus, $\{\beta_i\}$ converges uniformly on compact subsets to $\alpha$. Since each $\beta_i$ is a geodesic from the identity to an element of $H$, this implies $[\alpha] = x \in \Lambda_{*}(H)$
\end{proof}

Our first dynamical result contains our connection between regular languages and the Morse boundary.  We use the fact that the language of geodesics words that are $(B_M;M)$-locally  Morse is regular (Theorem \ref{thm:regular_language_for_Morse_geodesics}) to prove that the set of limit points of Morse elements in the Morse boundary is dense.

\begin{theorem}\label{thm: algebraic density of Morse boundary}  Let $G$ be a Morse local-to-global group. The set  $$Q_{*}(G):=\{g^{+} \mid  g \text{ is Morse}\}$$ is dense in the $\partial_{*} G$.
\end{theorem}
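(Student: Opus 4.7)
The plan is to exploit pigeonhole in the deterministic finite automaton $\mathcal{G}$ accepting $L_M$ (Corollary \ref{cor: Morse geodesic form regular langauge}) to extract loops from long prefixes of an $M$-Morse ray representing $x$, then convert these loops into Morse elements whose attracting fixed points approach $x$. The essential leverage comes from Theorem \ref{thm:regular_language_for_Morse_geodesics}(1): every word in $L_M$ is $M'$-Morse for a uniform Morse gauge $M'$ depending only on $M$, so iterating a loop in $\mathcal{G}$ produces uniformly Morse words for free.

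Concretely, I would fix $x \in \partial_{*} G$, pick an $M$-Morse geodesic ray $\alpha$ from $e$ with $[\alpha] = x$, and let $w_n$ be its prefix of length $n$, so that $w_n \in L_M$. Letting $N$ be the number of states of $\mathcal{G}$, pigeonhole applied to the last $N+1$ states visited while reading $w_n$ through $\mathcal{G}$ produces, for every $n > N$, indices $n - N \leq i < j \leq n$ at which the same state is visited. Writing $w_n = p_n u_n q_n$ with $|p_n| = i$ and $|u_n| = j - i \geq 1$, the subword $u_n$ labels a nontrivial loop in $\mathcal{G}$ while $|p_n| \geq n - N$.

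Because every state of $\mathcal{G}$ is an accept state, $p_n u_n^k \in L_M$ for every $k \geq 1$, and Theorem \ref{thm:regular_language_for_Morse_geodesics}(1) makes each $p_n u_n^k$ an $M'$-Morse geodesic word. Translating by $\overline{p_n}^{-1}$ then shows that $u_n^k$ itself is $M'$-Morse geodesic for all $k$, so $\overline{u_n}$ has infinite order and $\langle \overline{u_n}\rangle$ is stable (Lemma \ref{lem: Morse is contagious} upgrades this to every geodesic from $e$ to a power of $\overline{u_n}$, not merely the one labeled $u_n^k$). Thus $\overline{u_n}$, and therefore its conjugate $g_n := \overline{p_n u_n p_n^{-1}}$, is a Morse element, with $g_n^+$ represented by the $M'$-Morse geodesic ray $p_n u_n^\infty$: indeed $g_n^k \cdot e = \overline{p_n u_n^k p_n^{-1}}$ lies within distance $|p_n|$ of $\overline{p_n u_n^k}$, which travels out along $p_n u_n^\infty$ as $k \to \infty$.

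Finally, the ray $p_n u_n^\infty$ agrees with $\alpha$ on the initial segment of length $|p_n| \geq n - N$, so the rays $p_n u_n^\infty$ converge to $\alpha$ in the direct limit topology on $\partial_{*} G$, yielding $g_n^+ \to x$ and hence the density of $Q_{*}(G)$. The main obstacle to be confronted is verifying that the pigeonhole loop $u_n$ produces a genuine Morse element rather than a torsion element or an element generating a distorted cyclic subgroup; that this is automatic is precisely the payoff of Theorem \ref{thm:regular_language_for_Morse_geodesics}, which guarantees the uniform gauge $M'$ simultaneously for every iterate of the loop.
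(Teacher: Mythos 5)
Your proposal is correct and follows essentially the same route as the paper's proof: pigeonhole on the states of the automaton for $L_M$ along prefixes of an $M$-Morse ray representing $x$, iterate the resulting loop to get uniformly $M'$-Morse geodesic words, and conjugate the loop element by the long prefix to obtain Morse elements $g_n$ with $g_n^+$ represented by rays agreeing with $\alpha$ on longer and longer initial segments. The only differences are cosmetic (indexing of the pigeonhole and your slightly more explicit justification, via Lemma \ref{lem: Morse is contagious}, that the loop element generates a stable cyclic subgroup).
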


\begin{proof}
Fix a finite generating set $A$ and let $X= \cay(G,A)$. We wish to show that for every $x \in \partial_{*} X_e$, there exists a sequence of Morse elements $g_n \in G$ with $[g^{+}_n] \rightarrow x$.

Let $\alpha:[0, \infty) \rightarrow X$ be an $M$-Morse geodesic ray with $\alpha(0)=e$ and $x=[\alpha]$. Let $\mc{G}$  be the finite state automaton  from Corollary \ref{cor: Morse geodesic form regular langauge} accepting the language $L_M$ of geodesic words in $A^\star$ that are $(B_M;M)$-locally Morse. Let $m$ be the number of states of $\mc{G}$. We wish to show that for any $i \in \mathbb{N}$ there exists $g_i \in G$ such that the sequence of geodesics  $\{[e,g_i^n]\}^\infty_{n=1}$ fellows travels $\alpha$ for at least $i$-time.

For a word $w\in L_M$, let $s(w)$ be the state of $\mc{G}$ obtained by reading the word $w$. For each $i \in \mathbb{N}$, let $w_i$ be the geodesic word labeling the geodesic segment $\alpha\vert_{[0,i]}$. Since the total number of states of $\mc{G}$ is $m$, for each $i \in \mathbb{N}$, we must have $s(w_j)= s(w_k)$ for two distinct $j,k \in \{i,\dots, i+m+1\}$.   This implies that $w_{i+m+1} = u_i v_i q_i$ where $|u_i|\geq i$ and $s(u_i) = s(u_iv_i)$.   For each $n$, the word  $u_i v_i^n$ is therefore accepted by $\mc{G}$. Hence  $u_iv_i^n$ is a geodesic word that is $(B_M;M)$-locally Morse. Further, the infinite ray $\beta_i \colon [0,\infty) \to \cay(G,A)$ stating at $e$ and labeled by $u_iv_iv_iv_i\cdots$ is also a geodesic that is $(B_M;M)$-locally Morse.   By applying the Morse local-to-global property, there exists $M'$ depending only on $M$ so that $\beta_i$ is an $M'$-Morse geodesic ray and $u_iv_i^n$ is an $M'$-Morse geodesic word for all $i,n\in \mathbb{N}$; see Remark \ref{rem:Morse gauge for fellow traveling}. 

Let $g_i = \overline{u}_i\overline{v}_i\overline{u}_i^{-1}$. Since $\beta_i$ is an $M'$-Morse geodesic and $|\overline{u}_i|$ is  bounded, $g_i$ is a Morse element and $g_i^+ = [\beta_i]$ for all $i \in \mathbb{N}$. By construction, $d(\alpha(t),\beta_i(t))=0$ for all $t \leq i$.  Hence $[\beta_i] = g_i^+ \rightarrow x$ in $\partial_{*} X_e$, because  $\beta_i \rightarrow \alpha$ uniformly on compact subsets.
\end{proof}

\begin{remark}\label{rem:infinite_order_element}
The proof of Theorem \ref{thm: algebraic density of Morse boundary} ensures that any Morse local-to-global group with non-empty Morse boundary contains an infinite order Morse element. While this fact was already shown in \cite{Morse-local-to-global}, we note that the proof of Theorem \ref{thm: algebraic density of Morse boundary} does not directly use the Morse local-to-global property, but only the conclusion of Theorem  \ref{intro_thm:regular_language_for_Morse_Geodesics} that the set of $M$-Morse geodesic words is contained in a regular language all of whose elements are $M'$-Morse geodesic words. In particular, the proof of Theorem \ref{thm: algebraic density of Morse boundary} implies Theorem \ref{intro_thm:regular_language_for_Morse_Geodesics} cannot hold for Fink's examples of infinite torsion groups with non-empty Morse boundary \cite{Fink}.
\end{remark}

Our second dynamical result applies Theorem \ref{thm: algebraic density of Morse boundary} to conclude that the limit set of any infinite normal subgroup is the entire Morse boundary.

 \begin{theorem}\label{thm: normal subgroups fill}
If $H$ is an infinite normal subgroup of a Morse local-to-global group $G$, then the limit set of $H$ is the full Morse boundary of $G$. In notation, we have $\Lambda_{*}(H)=\partial_{*} G  $.

\end{theorem}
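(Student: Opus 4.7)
My plan is to parallel the classical argument that an infinite normal subgroup of a non-elementary hyperbolic group has full limit set: establish that $\Lambda_{*}(H)$ is nonempty, $G$-invariant, and closed, and then combine density of the attracting fixed points of Morse elements (Theorem \ref{thm: algebraic density of Morse boundary}) with north-south dynamics (Lemma \ref{lem: North-south dynamics}) to promote $\Lambda_{*}(H)$ to the full Morse boundary. The statement is vacuous when $\partial_{*} G = \emptyset$, so I assume $\partial_{*} G \neq \emptyset$; by Remark \ref{rem:infinite_order_element}, $G$ then contains a Morse element. Closedness of $\Lambda_{*}(H)$ is already Lemma \ref{lem: limit set is closed }, so the substantive steps are $G$-invariance, nonemptiness, and the dynamical conclusion.

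For $G$-invariance, let $x \in \Lambda_{*}(H)$ be realized by a uniformly $M$-Morse sequence $\{[e, h_n]\}$ with $h_n \in H$, and pick $k \in G$. Normality gives $h_n' := k h_n k^{-1} \in H$, and the piecewise path from $e$ to $h_n'$ that passes through $k$ and $k h_n$ consists of the translated Morse geodesic $k \cdot [e, h_n]$ flanked by two segments of length $|k|$. Two applications of Lemma \ref{lem: Morse is contagious}, combined with Lemma \ref{lem: fellow traveling}, show the direct geodesics $[e, h_n']$ are uniformly Morse and converge to a ray representing $k \cdot x$, placing $k \cdot x \in \Lambda_{*}(H)$.

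For nonemptiness, fix any Morse element $g \in G$. If every element of $H$ commutes with $g$ then $H$ is contained in the centralizer of $g$, and since the centralizer of a Morse element in a Morse local-to-global group is virtually cyclic (a standard structural fact underlying this paper's cited combination theorems), the infinite subgroup $H$ intersects $\langle g \rangle$ infinitely; this yields $g^j \in H$ for some $j \neq 0$ and hence $g^+ = (g^j)^+ \in \Lambda_{*}(H)$. Otherwise pick $h \in H$ not commuting with any power of $g$ and set $h_n := g^n h g^{-n} \in H$; these are pairwise distinct, so $|h_n| \to \infty$. I then aim to show $[e, h_n] \to g^+$ with uniform Morse gauge by analyzing the triangle $e, g^n, h_n$: the side $[g^n, h_n] = g^n \cdot [e, h g^{-n}]$ is controlled by the Morse geometry of the $g^{-1}$-axis starting at $g^n$, and computing the generalized Gromov product $(h_n \mid g^n)_e$ shows it grows like $n|g|$ up to error controlled by the Morse gauge of $g$ and $|h|$. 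Morse-triangle thinness then forces $[e, h_n]$ to fellow-travel $[e, g^n]$ up to $g^n$, so arbitrarily long initial segments of $[e, h_n]$ coincide, up to bounded error, with the ray along the axis of $g$ representing $g^+$, placing $g^+ \in \Lambda_{*}(H)$.

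With the three properties of $\Lambda_{*}(H)$ in hand, let $g \in G$ be any Morse element and $y \in \Lambda_{*}(H)$. Since the stabilizer of a Morse boundary point in a Morse local-to-global group is virtually cyclic (and we may assume $G$ is not itself virtually cyclic, else the theorem is trivial), some $k \in G$ satisfies $k \cdot y \neq g^-$, and $G$-invariance places $k \cdot y \in \Lambda_{*}(H)$. Lemma \ref{lem: North-south dynamics} applied to $K = \{k \cdot y\}$ then yields $g^n(k \cdot y) \to g^+$, and closedness of $\Lambda_{*}(H)$ gives $g^+ \in \Lambda_{*}(H)$. Hence $Q_{*}(G) \subseteq \Lambda_{*}(H)$, and density of $Q_{*}(G)$ in $\partial_{*} G$ combined with Lemma \ref{lem: limit set is closed } concludes that $\Lambda_{*}(H) = \partial_{*} G$. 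The principal obstacle is the geometric claim in the nonemptiness step that $[e, g^n h g^{-n}] \to g^+$ with uniform Morse gauge: the natural U-shaped path through $g^n$ and $g^n h$ is not a quasi-geodesic, so the Morse local-to-global property does not directly apply, and one must instead control the actual geodesic via Morse-triangle thinness or a closest-point projection estimate onto the Morse axis of $g$.
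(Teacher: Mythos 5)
Your overall scaffolding (show $\Lambda_{*}(H)$ is nonempty, closed, and $G$-invariant, then combine density of $Q_{*}(G)$ with north--south dynamics) is a legitimate classical route, and the closedness and invariance steps are routine and fine. But there are two genuine gaps. The first is the nonemptiness step, which you yourself flag as the principal obstacle: your argument hinges on showing $[e,g^nhg^{-n}]\to g^+$ with uniform Morse gauge, and that geometric control is exactly what is missing (the U-shaped path through $g^n$ and $g^nh$ is not a quasi-geodesic; what you actually need to bound the cancellation at the corner is something like $h\cdot g^-\neq g^-$, i.e.\ $h$ outside the elementary closure of $g$, not non-commutation, and your dichotomy ``every element of $H$ commutes with $g$'' versus ``some $h$ commutes with no power of $g$'' is not even exhaustive). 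The paper sidesteps all of this: it quotes Corollary 3.6 of \cite{Morse-local-to-global} to produce a Morse element $h$ lying in $H$ itself, whence $h^{+}\in\Lambda_{*}(H)$ is immediate because the geodesics $[e,h^{i}]$ are uniformly Morse and $h^{i}\in H$. The second gap is your appeal to ``the stabilizer of a Morse boundary point in a Morse local-to-global group is virtually cyclic,'' which is neither proved nor cited in the paper and is a nontrivial assertion; you invoke it only to find $k$ with $k\cdot y\neq g^{-}$, so it is avoidable, but as written that step is unsupported.

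For comparison, the paper's argument is shorter and needs neither $G$-invariance nor your nonemptiness analysis: given $x$ and a neighborhood $U$, Theorem \ref{thm: algebraic density of Morse boundary} gives a Morse element $g$ with $g^{+}\in U$; the cited corollary of \cite{Morse-local-to-global} gives a Morse element $h\in H$; Lemma \ref{lem: North-south dynamics} gives $m$ with $g^{m}h^{+}\in U$; normality gives $g^{m}h^{i}g^{-m}\in H$, so $g^{m}h^{+}\in\Lambda_{*}(H)$; and Lemma \ref{lem: limit set is closed } finishes. If you want to salvage your route, the cleanest repair is to import that same corollary for the nonemptiness step; then $h^{+}$ and $h^{-}$ are two distinct points of $\Lambda_{*}(H)$, one of which differs from $g^{-}$, and your unproved stabilizer claim becomes unnecessary as well.
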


\begin{proof}[Proof of Theorem \ref{thm: normal subgroups fill}] We argue similarly to Theorem 3.1 of \cite{BowersRuane1996}. Let $G$ be a Morse local-to-global group and $H$ be a infinite normal subgroups of $G$. Let $X = \cay(G,A)$ where $A$ is some fixed finite generating set for $G$. We wish to show that for every $x \in \partial_{*} X_e$, we have $x \in \Lambda_{*}(H).$ 

By Theorem \ref{thm: algebraic density of Morse boundary}, for every open neighborhood $U$ of $x \in \partial_{*}G$, there exists some Morse element $g \in G$, such that $g^{+} \in U.$ By Corollary 3.6 of \cite{Morse-local-to-global},  there is $h\in H$ such that $h$ is a Morse element of $G$. Using Lemma \ref{lem: North-south dynamics}, there exists some $m \in \mathbb{N}$ such that $g^{m}h^{+} \in U$. The sequence $\{[e,g^{m}h^{i}g^{-m}]\}_{i=1}^\infty$ in $\cay(G,A)$ converges to $g^m h^{+} \in \partial_{*} X_e$. Since $H$ is normal, we have $g^{m}h^{i}g^{-m} \in H$. Hence, $ g^{m}h^{+} \in \Lambda_{*}(H).$ Since every open neighborhood of $x$ contains an element of the closed set $\Lambda_{*} (H)$ (Lemma \ref{lem: limit set is closed }), we conclude that $x \in \Lambda_{*} (H).$
\end{proof}

We conclude by proving a nice corollary to Theorem \ref{thm: normal subgroups fill} that shows a hyperbolic normal subgroup of a non-hyperbolic Morse local-to-global group cannot admit a Cannon--Thurston map. In order to formally define Cannon--Thurston maps for Morse boundaries, we must first make a simple observation: if $H=G$, then $\Lambda_*(H)=\partial_*H$.

\begin{definition}[Cannon--Thurston map for Morse boundaries]
Let $H$ be a  finitely generated subgroup of a finitely generated group $G$. There is a \emph{Cannon--Thurston map} from $\partial_{*} H$ to $\partial_{*} G$ if there exists a continuous function $\partial_\iota \colon \partial_* H \to \partial_*G$ with the property that for each $x \in \partial_{*} H$, if $[e,h_n]_H \to x$ for some sequence $\{h_n\}$ of elements of $H$, then $[e,h_n]_G \to \partial_\iota(x)$. Note, this definition ensures that if a Cannon--Thurston map exists, then its image is exactly $\Lambda_{*}(H)$.
\end{definition}

\begin{corollary}[Non-existence of Cannon--Thurston maps]\label{cor:no_cannon-thurston}
Let $G$ be a Morse-local-to-global group that is not hyperbolic and let $H$ be a normal hyperbolic subgroup of $G$, then a Cannon--Thurston map does not exist.
\end{corollary}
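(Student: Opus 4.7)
The plan is to argue by contradiction, combining Theorem~\ref{thm: normal subgroups fill} with the characterization that the Morse boundary is compact exactly for hyperbolic groups. Suppose such a Cannon--Thurston map $\partial_\iota \colon \partial_* H \to \partial_* G$ exists. By the remark closing the definition, the image of $\partial_\iota$ is exactly $\Lambda_*(H)$. Since $H$ is a hyperbolic group, $\partial_* H$ coincides with the Gromov boundary of $H$; in particular it is compact. Continuity of $\partial_\iota$ then forces $\Lambda_*(H) = \partial_\iota(\partial_* H)$ to be compact as a subspace of $\partial_* G$.

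Next, we may assume $H$ is infinite, as otherwise $\partial_* H = \emptyset$ and the statement is vacuous. Because $H$ is an infinite normal subgroup of the Morse local-to-global group $G$, Theorem~\ref{thm: normal subgroups fill} gives $\Lambda_*(H) = \partial_* G$. Combining with the previous paragraph, $\partial_* G$ itself must be compact. But the results of Murray \cite{Murray2015TopologyAD} and Cordes--Durham \cite{CordesDurham2017} say that the Morse boundary of a finitely generated group is compact if and only if the group is hyperbolic. This forces $G$ to be hyperbolic, contradicting our hypothesis.

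Almost all of the work is already done: Theorem~\ref{thm: normal subgroups fill} is the substantive input, and the rest is a short assembly of standard facts about Morse boundaries. The only delicate point worth stating carefully is the identification $\partial_\iota(\partial_* H) = \Lambda_*(H)$, which needs both containments: the containment $\partial_\iota(\partial_* H) \subseteq \Lambda_*(H)$ is immediate from the defining convergence property applied to any sequence $\{h_n\} \subset H$ with $[e,h_n]_H \to x$, while the reverse containment $\Lambda_*(H) \subseteq \partial_\iota(\partial_* H)$ uses that a sequence $[e,h_n]_G$ converging in $\partial_* G$ to a point of $\Lambda_*(H)$ can be refined, via compactness of $\partial_* H$, to pick out a subsequential limit in $\partial_* H$ whose image is the given point.
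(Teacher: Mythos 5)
Your proof is correct and follows essentially the same route as the paper: continuity of the Cannon--Thurston map, compactness of $\partial_* H$ (as the Gromov boundary of a hyperbolic group), and surjectivity of the map onto $\Lambda_*(H)=\partial_* G$ via Theorem \ref{thm: normal subgroups fill} force $\partial_* G$ to be compact, contradicting non-hyperbolicity of $G$. The only cosmetic differences are that the paper disposes of the degenerate case $\partial_* G=\emptyset$ at the outset rather than reducing to $H$ infinite (where nonemptiness of $\partial_* H$ makes the compactness criterion apply), and it takes the identification of the image with $\Lambda_*(H)$ directly from the remark in the definition instead of re-deriving both containments.
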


\begin{proof}
If $\partial_* G$ is empty, then no map exists, and thus a Cannon--Thurston map does not exist. So assume that $\partial_* G$ is not empty and that there is a Cannon--Thurston map $\partial_\iota \colon \partial_* H \to \partial_*G$. 

Since $H$ is hyperbolic, the Morse boundary of $H$, denoted $\partial_* H$, is simply the Gromov boundary of $H$ and is therefore compact \cite{Cordes2017}. Since $G$ is not hyperbolic, then $\partial_*G$ is not compact \cite{CordesDurham2017}. By Theorem \ref{thm: normal subgroups fill}, we know that $\partial_\iota$ is a surjective map, and it is continuous by definition of Cannon--Thurston map. Since the continuous image of a compact set is compact, we reach a contradiction. 
\end{proof}

\bibliography{bibliography}{}
\bibliographystyle{alpha}

\end{document}